\let\OLDthebibliography\thebibliography
\renewcommand\thebibliography[1]{
  \OLDthebibliography{#1}
  \setlength{\parskip}{0pt}
  \setlength{\itemsep}{4pt plus 0.3ex}
}
\newcommand{\mres}{\mathbin{\vrule height 1.6ex depth 0pt width
0.13ex\vrule height 0.13ex depth 0pt width 1.3ex}}
\newcommand{\pdfgraphics}{\ifpdf\DeclareGraphicsExtensions{.pdf,.jpg}\else\fi}
\definecolor{hanblue}{rgb}{0.27, 0.42, 0.81}
\definecolor{red}{rgb}{1.0, 0.0, 0.0}
\theoremstyle{plain}
\newtheorem{teo}{Theorem}[section]
\newtheorem{lemma}[teo]{Lemma}
\newtheorem{prop}[teo]{Proposition}
\newtheorem{cor}[teo]{Corollary}
\theoremstyle{definition}
\newtheorem{rem}[teo]{Remark}
\theoremstyle{remark}
\numberwithin{equation}{section}
\renewcommand{\epsilon}{\varepsilon}
\newcommand{\N}{\ensuremath{\mathbb{N}}}
\newcommand{\Z}{\ensuremath{\mathbb Z}}
\newcommand{\R}{\ensuremath{\mathbb R}}
\newcommand{\supp}{\textnormal{supp}}
\newcommand{\link}{\operatorname{link}}
\newcommand{\dist}{\operatorname{dist}}
\newcommand{\s}{\hspace{7pt}}
\newcommand{\vsp}{\vspace{3pt}}
\newcommand{\Sp}{\mathbb{S}}
\newcommand{\ep}{\varepsilon}
\newcommand{\E}{\mathcal{E}}
\newcommand{\M}{\mathbb{M}}
\newcommand{\D}{\mathcal{D}}
\newcommand{\F}{\mathbf{F}}
\newcommand{\Leb}{\mathscr{L}}
\newcommand{\G}{\mathscr{G}}
\newcommand{\lcurr}{[\hspace{-1.3pt}[}
\newcommand{\rcurr}{]\hspace{-1.3pt}]}
\DeclarePairedDelimiter{\abs}{\lvert}{\rvert}
\renewcommand*\env@matrix[1][*\c@MaxMatrixCols c]{%
  \hskip -\arraycolsep
  \let\@ifnextchar\new@ifnextchar
  \array{#1}}
\newcommand{\addressa}[1]{\gdef\@addressa{#1}}
\newcommand{\emaila}[1]{\gdef\@emaila{\url{#1}}}
\newcommand{\addressb}[1]{\gdef\@addressb{#1}}
\newcommand{\emailb}[1]{\gdef\@emailb{\url{#1}}}
\newcommand{\addressc}[1]{\gdef\@addressc{#1}}
\newcommand{\emailc}[1]{\gdef\@emailc{\url{#1}}}
\newcommand{\@endstuff}{\par\vspace{\baselineskip}\noindent
\begin{tabular}{@{}l}\scshape\@addressa\\\textit{E-mail address:} \@emaila\end{tabular} 

\vspace{12pt} \noindent
\begin{tabular}{@{}l}\scshape\@addressb\\ \textit{E-mail address:} \@emailb\end{tabular}

\vspace{12pt} \noindent
\begin{tabular}{@{}l}\scshape\@addressc\\ \textit{E-mail address:} \@emailc\end{tabular}
}
\begin{document}

\pdfgraphics 

\title{Coercivity and Gamma-convergence of the $p$-energy of sphere-valued Sobolev maps}


\author{Michele Caselli, Mattia Freguglia and Nicola Picenni}


\addressa{Michele Caselli \\ Scuola Normale Superiore, Piazza dei Cavalieri 7, 56126 Pisa, Italy}
\emaila{michele.caselli@sns.it}

\addressb{Mattia Freguglia \\ Scuola Normale Superiore, Piazza dei Cavalieri 7, 56126 Pisa, Italy }
\emailb{mattia.freguglia@sns.it}

\addressc{Nicola Picenni \\ University of Pisa, Department of Mathematics \\ Largo Bruno Pontecorvo 5, 56127 Pisa, Italy }
\emailc{nicola.picenni@unipi.it}


\date{}

\maketitle

\vspace{-0.5cm}
\begin{abstract}

\noindent We consider sequences of maps from an $(n+m)$-dimensional domain into the $(n-1)$-sphere, which satisfy a natural $p$-energy growth, as $p$ approaches $n$ from below. We prove that, up to subsequences, the Jacobians of such maps converge in the flat topology to an integral $m$-current, and that the $p$-energy Gamma-converges to the mass of the limit current.

As a corollary, we deduce that the Jacobians of $p$-energy minimizing maps converge to an integral $m$-current that is area-minimizing in a suitable cobordism class, depending on the boundary datum. Moreover, we obtain new estimates for the minimal $p$-energy of maps with prescribed singularities.

\end{abstract}

 \tableofcontents

\vspace{5ex}

\noindent\textbf{Mathematics Subject Classification (2020)}: 49J45, 49Q20, 58K45, 58E20.


\vsp

\noindent \textbf{Keywords}: Sobolev maps, Gamma-convergence, distributional Jacobian, integral currents.




\section{Introduction}


In this work, we study the asymptotic behavior of sequences of sphere-valued Sobolev maps $u_p \in W^{1,p}_{\rm loc}(\mathbb{R}^{n+m}; \mathbb{S}^{n-1}),$ as $p \in (n-1,n)$ approaches $n$ (which is the first exponent in which $m$-dimensional singularities are not in the energy space), subject to a uniform bound on their rescaled $p$-energies
\begin{equation}\label{eq: first eq uniform bound}
   (n-p) \int |\nabla u_p|^p \, dx
\end{equation}

Our main result, Theorem \ref{main_res}, provides a general compactness theorem for the Jacobians of these maps. Moreover, we prove $\Gamma$-convergence of the $p$-energy, properly rescaled as in \eqref{eq: first eq uniform bound}, to the mass of the limit current of the Jacobians. Our main theorem is in complete analogy with the results for the Ginzburg-Landau energy in general codimension obtained by Alberti, Baldo, and Orlandi in \cite{2005-Indiana-ABO}, which extended the earlier work in codimension two by Jerrard and Soner \cite{Jerrard-Soner-calcvar}. 

As a corollary of our analysis in the case of fixed boundary conditions (Section \ref{sec: boundary val problems}), we recover several previous results, including one by Hardt and Lin on the convergence of the energy densities of $p$-energy minimizing maps with fixed boundary conditions, as $p$ approaches~$n$. We also obtain an ``oriented version" of this convergence for the Jacobians of these maps.

The proof of the compactness and $\Gamma$-liminf inequality takes up most of the work, and is divided into two macro-steps. First, we address the case of dimension zero $m=0$, that is when the model singularities are isolated points. Then, we extend the result to general dimension and codimension using the zero-dimensional case. While the extension to arbitrary dimensions (Section \ref{sec: compactness and liminf general dim}) follows the techniques of deformation onto grids developed in \cite{2005-Indiana-ABO} (which in turn were inspired by the deformation theorem of \cite{1960-FF}), the core novelty of our work lies in the zero-dimensional setting (see Theorem \ref{teo:X_M+X_F}), where we provide a direct and elementary, yet highly nontrivial, approach to the compactness and liminf inequality. 

Our approach is completely variational and applies to general sequences satisfying the natural energy growth \eqref{eq: first eq uniform bound} without minimality or stationarity assumptions. For this reason, we believe that the techniques we develop in the proof of Theorem~\ref{teo:X_M+X_F} may apply to a broader class of problems involving singular limits.

\subsection{Main result}

Let $\omega_{n-1}:=\mathcal{H}^{n-1}(\Sp^{n-1})$ be the volume of the unit sphere in $\R^{n}$ and let us set also $\gamma_n:=\omega_{n-1}/n$. Our main result is the following.

\begin{teo}\label{main_res}
Let $n\geq 2$ and $m\geq 0$ be integers, and let $\Omega\subset \R^{n+m}$ be a bounded open set with Lipschitz boundary. For a (discrete) sequence $p\in (n-1,n)$ let $u_p \in W^{1,p}(\Omega;\Sp^{n-1})$ be maps such that
\begin{equation}\label{hp:energy-bound}
\limsup_{p \to  n^-} \, (n-p)\int_{\Omega} |\nabla u_p |^{p}\,dx<+\infty.
\end{equation}

\noindent Then the following statements hold.
\begin{itemize}
    \item[$(i)$](Compactness and $\Gamma$-liminf inequality) There exists an integral $m$-boundary $\Sigma$ in $\Omega$ such that, up to a subsequence, the Jacobians $\{\star Ju_{p}\}$ converge to $\gamma_n \Sigma$ in the flat topology of $\Omega$ and (along this subsequence) it holds that
    \begin{equation}\label{eq: liminf main}
        \liminf_{p \to  n^-} \, (n-p)\int_{\Omega} |\nabla u_p |^{p}\,dx \geq (n-1)^{\frac{n}{2}}\omega_{n-1} \M_\Omega(\Sigma).
    \end{equation}
    
    \item[$(ii)$] ($\Gamma$-limsup inequality) For every integral $m$-boundary $\Sigma$ on $\Omega$ and every $p\in (n-1,n)$ there exists maps $u_p\in W^{1,p}(\Omega;\Sp^{n-1})$ such that $\star Ju_p\to \gamma_n\Sigma$ as $p\to n^-$ in the flat topology of $\Omega$ and 
$$ \limsup_{p \to  n^-} \, (n-p)\int_{\Omega} |\nabla u_p |^{p}\,dx \leq (n-1)^{\frac{n}{2}}\omega_{n-1} \M_\Omega(\Sigma).$$
\end{itemize}
\end{teo}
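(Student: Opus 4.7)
The proof splits into the compactness-liminf part (i) and the recovery part (ii). Part (i) is handled by slicing and a grid deformation in the spirit of Alberti--Baldo--Orlandi \cite{2005-Indiana-ABO}, using Theorem~\ref{teo:X_M+X_F} (the zero-dimensional case) as a black box, while part (ii) is constructive and built around a radial vortex in the normal directions to $\Sigma$.

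\textbf{Part (i).} Fix a small scale $\varepsilon>0$ and a random translate of the $\varepsilon$-cubical grid of $\R^{n+m}$. For each $p$, deform $u_p$ onto the $m$-skeleton of the grid via radial projections from suitable centres in the dual $n$-cubes, with controlled loss of $p$-energy. By Fubini in the translation parameter one picks a favorable grid on which, on most $n$-dimensional slices dual to the $m$-faces, the restriction of $u_p$ is a sphere-valued $W^{1,p}$ map satisfying a scale-invariant version of~\eqref{hp:energy-bound}. Applying Theorem~\ref{teo:X_M+X_F} slicewise then yields, up to a subsequence, flat convergence of each sliced Jacobian to $\gamma_n$ times an atomic integral $0$-current, together with the sharp sliced liminf with constant $(n-1)^{n/2}\omega_{n-1}$. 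Assembling the integer multiplicities delivered by the slice limits across the $m$-faces yields an integral polyhedral $m$-current $\Sigma_\varepsilon$ at scale $\varepsilon$. Its boundary structure is inherited from the distributional closedness of $\star Ju_p$ (a pull-back of the volume form on $\Sp^{n-1}$), from which one exhibits integral $(m+1)$-currents $D_p$ with $\partial D_p$ approximating $\star Ju_p/\gamma_n$ and uniformly bounded mass. Federer--Fleming compactness, combined with a diagonal argument in $\varepsilon\to 0$, yields an integral $m$-boundary $\Sigma$; integrating the sliced liminf in the transverse directions via Fubini produces~\eqref{eq: liminf main}.

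\textbf{Part (ii).} By density of polyhedral integral boundaries in the flat topology with convergence of masses, it suffices to treat $\Sigma=\partial D$ for $D$ a polyhedral integral $(m+1)$-current of multiplicity one (after refinement). On a tubular neighborhood $T_\eta$ of $\supp(\Sigma)$, parametrize locally by Fermi coordinates $(y,z)\in\R^m\times\R^n$ with $\Sigma=\{z=0\}$, and set the normal vortex $u_p(y,z):=z/|z|$. Away from $T_\eta$, the orientation of $D$ permits a Lipschitz extension of $u_p$, whose $p$-energy is of order one and thus vanishes after multiplication by $(n-p)$. The dominant contribution is the Fubini integral of the radial energy: for $v(z)=z/|z|$ on $B_\eta\subset\R^n$,
\begin{equation}
(n-p)\int_{B_\eta}|\nabla v|^p\,dz \;=\; (n-1)^{p/2}\,\omega_{n-1}\,\eta^{\,n-p}\;\longrightarrow\;(n-1)^{n/2}\,\omega_{n-1} \qquad \text{as } p\to n^-,
\end{equation}
which, multiplied by $\M_\Omega(\Sigma)$ via integration in the tangential variable $y$, yields the sharp limsup. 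Meanwhile the topological degree of $u_p$ on each normal $n$-slice equals the orientation of $\Sigma$ there, so $\star Ju_p\to\gamma_n\Sigma$ in the flat topology.

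\textbf{Main obstacle.} The genuinely delicate step is the reconstruction in part~(i): turning the family of slicewise atomic limits produced by Theorem~\ref{teo:X_M+X_F} into a single integral $m$-current on $\Omega$ which is moreover a boundary. This requires measurable dependence of the sliced Jacobians on the slice parameter (via BV-coarea together with uniform flat-norm bounds), and a careful extraction closing integer-valuedness at positive grid scale $\varepsilon$ before letting $\varepsilon\to 0$. Once the grid scheme is in place, the recovery of part (ii) and the slicewise liminf are comparatively routine: the former hinges on the sharp vortex computation above, while the latter is a direct slicewise application of Theorem~\ref{teo:X_M+X_F} combined with Fubini.
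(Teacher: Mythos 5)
Your proposal follows the same general architecture as the paper — grid deformation in the spirit of Alberti--Baldo--Orlandi, slicewise application of the zero-dimensional estimate, Federer--Fleming compactness, and a polyhedral vortex for the recovery sequence — and Part~(ii) is essentially correct in outline. However, there is a genuine gap in Part~(i).

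The step you elide is the preliminary \emph{quantitative approximation} of $u_p$ by maps whose Jacobian is $\gamma_n$ times a smooth oriented $m$-submanifold with \emph{polynomially controlled} total mass (Proposition~\ref{prop:approx} in the paper). This step is not cosmetic: Theorem~\ref{teo:X_M+X_F} requires the sliced map to be smooth outside a finite set, and a general $W^{1,p}$ map need not have atomic Jacobians on slices nor finite Jacobian mass at all. More critically, the sharp liminf with constant $(n-1)^{n/2}\omega_{n-1}$ comes from estimate~\eqref{th:mass_sharp}, whose error term involves $\log\bigl(\M_\Omega(\star Ju/\gamma_n)\bigr)/|\log(n-p)|$. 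Without the bound $\M_\Omega(\star J\tilde u_k)\lesssim (n-\tilde p_k)^{-1-3/\delta}$ from the approximation, this error term is not controlled and one only recovers the non-sharp constant in~\eqref{th:mass}, which is not enough to close the $\Gamma$-liminf. Your phrase ``satisfying a scale-invariant version of~\eqref{hp:energy-bound}'' does not supply this mass control, which is a separate constraint.

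Relatedly, your phrase ``deform $u_p$ onto the $m$-skeleton via radial projections from suitable centres in the dual $n$-cubes'' conflates two distinct operations. In the paper, radial projections $\pi_a$ in the \emph{target} $\mathbb{R}^n$ are used to regularize the map (producing a smooth submanifold Jacobian of controlled mass), whereas the Federer--Fleming deformation $\Phi$ acts on the \emph{Jacobian current} in the domain, pushing it onto the $m$-skeleton (Lemma~\ref{lemma:deformation}); it is not a deformation of $u_p$ itself, and it requires the Jacobian to already be a smooth submanifold, which again forces the approximation step first. Once this is in place, the assembly of the slicewise data into an integral polyhedral current with no interior boundary is precisely what Lemma~\ref{lemma:deformation} outputs, rather than something one must reconstruct by hand from the slice limits. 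Also note that the extraction of a favorable grid parameter must additionally exclude the measure-small set of translates where the segment parts $S_{k,P}$ from Theorem~\ref{teo:X_M+X_F} hit $R_{n-1}$, which is where the factor $\alpha^{1/(2(n-p))}$ in~\eqref{th:flat} does real work; this is a quantitative argument, not a soft Fubini step.

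For Part~(ii), your construction is correct in spirit, but the competitor away from the tubular neighborhood of $\Sigma$ cannot be Lipschitz: it necessarily has singularities on the $(m-1)$-skeleton $S^N_{m-1}$ of the polyhedral filling $N$. The paper uses the pointwise bound $|\nabla u|=O(1/\dist(\cdot,\Sigma\cup S^N_{m-1}))$ from Theorem~\ref{teo: sing competitor}, and then checks via Lemma~\ref{lemma:int-dist-surf} that the contribution near $S^N_{m-1}$ is $O(\delta^{n+1-p})$, hence subleading. This is a minor repair to your outline, whereas the missing approximation step in Part~(i) is structural.
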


The following result is an immediate corollary of the previous theorem. 

\begin{cor}\label{cor: correnti compactness}
Let $n\geq 2$ and $m\geq 0$ be integers, and let $\Omega\subset \R^{n+m}$ be a bounded open set with Lipschitz boundary. For a (discrete) sequence $p\in (n-1,n)$, let $\Sigma_p$ be an integral $m$-boundary and set
\begin{equation}\label{min_prescr_J}
   \E^{\rm  min}_p(\Sigma_p, \Omega) := \inf \biggl\{ \int_{\Omega} |\nabla v|^p \, : \, v\in W^{1,p}(\Omega;\Sp^{n-1}),\ \star Jv= \gamma_n \Sigma_p \biggr\} .
\end{equation}
If there holds
\begin{equation*}
    \limsup_{p \to n^-} \, (n-p) \E^{\rm  min}_p(\Sigma_p, \Omega) <+\infty , 
\end{equation*}
then there exists an integral $m$-boundary $\Sigma$ in $\Omega$ such that, up to a subsequence, $\Sigma_p\to \Sigma$ in the flat topology of $\Omega$ and (along this subsequence) it holds that
$$\liminf_{p \to  n^-} \,  (n-p) \E^{\rm  min}_p(\Sigma_p, \Omega) \geq (n-1)^{\frac{n}{2}}\omega_{n-1} \M_\Omega(\Sigma) . $$
\end{cor}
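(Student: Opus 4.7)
The plan is to deduce the corollary directly from part $(i)$ of Theorem \ref{main_res} applied to a sequence of almost-minimizers for the constrained problem defining $\E^{\rm min}_p(\Sigma_p, \Omega)$. The key observation is that the Jacobian constraint $\star J v = \gamma_n \Sigma_p$ is built into the admissibility class, so once we extract a subsequential flat limit of the Jacobians, we automatically obtain a flat limit of the $\Sigma_p$'s themselves.

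Concretely, for each $p$ in the given discrete sequence I would select, by definition of infimum, a map $v_p \in W^{1,p}(\Omega;\Sp^{n-1})$ with $\star J v_p = \gamma_n \Sigma_p$ and
$$\int_{\Omega} |\nabla v_p|^p \, dx \leq \E^{\rm min}_p(\Sigma_p, \Omega) + (n-p).$$
With this choice, $(n-p)\int_\Omega |\nabla v_p|^p\, dx \leq (n-p)\E^{\rm min}_p(\Sigma_p, \Omega) + (n-p)^2$, so the hypothesis on $\E^{\rm min}_p$ ensures that the sequence $\{v_p\}$ satisfies the uniform energy bound \eqref{hp:energy-bound}.

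Theorem \ref{main_res}$(i)$ then produces an integral $m$-boundary $\Sigma$ and a subsequence along which $\star J v_p \to \gamma_n \Sigma$ in the flat topology of $\Omega$, together with the liminf inequality
$$\liminf_{p\to n^-}(n-p)\int_\Omega |\nabla v_p|^p\,dx \geq (n-1)^{\frac{n}{2}}\omega_{n-1}\M_\Omega(\Sigma).$$
Because $\star J v_p = \gamma_n \Sigma_p$ by construction, the flat convergence $\star J v_p \to \gamma_n \Sigma$ is exactly the statement $\Sigma_p \to \Sigma$ in the flat topology of $\Omega$ (the factor $\gamma_n>0$ being harmless). Finally, from the almost-minimality inequality we have $(n-p)\E^{\rm min}_p(\Sigma_p,\Omega) \geq (n-p)\int_\Omega|\nabla v_p|^p\,dx - (n-p)^2$, and since $(n-p)^2 \to 0$, taking the liminf along the chosen subsequence yields the desired lower bound.

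There is no real obstacle here; the corollary is a routine consequence of Theorem \ref{main_res}, with the only delicate point being to insert enough slack (the $+(n-p)$ term) into the choice of $v_p$ so that the hypothesis \eqref{hp:energy-bound} is automatic, while at the same time keeping the slack negligible after multiplication by $(n-p)$ so that it does not pollute the liminf inequality.
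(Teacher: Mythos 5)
Your proof is correct and is precisely the argument the paper has in mind when it calls the corollary ``immediate'': pick near-minimizers $v_p$ with $\star J v_p = \gamma_n\Sigma_p$, note that the hypothesis on $\E^{\rm min}_p$ forces the uniform bound \eqref{hp:energy-bound} for $\{v_p\}$, and apply Theorem~\ref{main_res}$(i)$. The only minor remark is that the slack $+(n-p)$ is not delicate at all — any fixed $\epsilon>0$, or even slack $+1$, would do equally well, since the extra term is multiplied by $(n-p)\to 0$ in both the limsup check and the final liminf — but your choice is perfectly fine and the reasoning is complete.
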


With some technical adjustments, we also obtain a variant of Theorem~\ref{main_res} with Dirichlet boundary conditions (see Theorem~\ref{thm:bvp}). Since the complete statement is quite technical and would require a lot of terminology to state, we do not include it here. Nevertheless, we anticipate that, as an immediate corollary, we obtain that the Jacobians of $p$-energy minimizing maps with a fixed boundary datum $g\in W^{1-1/n,n}(\partial \Omega;\Sp^{n-1})$ converge to an area-minimizing integral $m$-current with prescribed boundary equal to the Jacobian of $g$; see Corollary~\ref{cor: Hardt-Lin for currents}.

\begin{rem}\label{rem:Millot}
In view of possible applications of our result to the study of the convergence of critical points or in the context of min-max (cf. \cite{PPS-CPAM-Gammaconv}), we remark that Theorem~\ref{main_res} still holds if one considers a variant of the $p$-energy in which $|\nabla u|^p$ is replaced by $(1+|\nabla u|^2)^{p/2}$. Indeed, the compactness and liminf part of the statement follows trivially because the modified functional is larger than the original one, while the limsup part only requires very minor modifications in the computations.

This variant, while being completely analogous to the $p$-energy from the variational point of view, has the advantage that its Euler-Lagrange equation is uniformly elliptic, so it avoids the degeneracy of the $p$-Laplace operator.
\end{rem}

\begin{rem}\label{rem:manifolds}
Our results Theorem \ref{main_res} and Corollary \ref{cor: correnti compactness} should also hold in the case of an ambient $(n+m)$-dimensional compact Riemaniann manifold $M$ (possibily with boundary). The statements regarding the compactness and liminf inequality can be easily deduced by a localization argument, while the construction of the recovery sequence requires some more care (cf. \cite[Remark~(v) after Theorem~1.1]{2005-Indiana-ABO}). 
\end{rem}

\begin{rem}
We point out that in our setting, we cannot expect to have compactness for the sequence of maps $\{u_p\}$ in any stronger sense than the weak* convergence in $L^\infty$, which follows trivially from the fact that our maps are sphere-valued. Indeed, it is not difficult to construct sequences with bounded rescaled $p$-energy that converge to the null map weakly* in $L^\infty$, and this means that we cannot have compactness in any topology that implies pointwise convergence.

In particular, the limit current $\Sigma$ provided by the compactness part of Theorem~\ref{main_res} cannot be seen as the Jacobian, nor the singular set, of a limit map. This phenomenon, which is typical of the $\Gamma$-convergence setting (cf. \cite[Remark~(vi) after Corollary~1.2]{2005-Indiana-ABO}), is a striking difference with respect to the behavior of minimizers (see \cite{HardtLinChangyou}) or stationary points (see \cite{Stern2018}).
\end{rem}

\subsection{Motivation and related works}


\subsubsection{The Ginzburg-Landau and Yang-Mills-Higgs frameworks}

The readers already familiar with the $\Gamma$-convergence theory in similar frameworks will notice that our result Theorem \ref{main_res} is formally analogous to the results in \cite{Jerrard-Soner-calcvar, 2005-Indiana-ABO, PPS-CPAM-Gammaconv,PPS-Gammaconv-codim3}. In particular, in the Ginzburg–Landau framework of \cite{Jerrard-Soner-calcvar, 2005-Indiana-ABO} for the energies
\begin{equation*}
     G_\ep(u) := \int \frac{1}{n}|\nabla u |^n + \frac{(1-|u|^2)^2}{4\ep^2} , 
\end{equation*}
and in the Yang-Mills-Higgs framework \cite{PPS-CPAM-Gammaconv}, in codimension two, for the energy
\begin{equation*}
      E_\ep(u, \nabla) := \int  |\nabla u |^2 + \ep^2 |F_\nabla|^2 +  \frac{(1-|u|^2)^2}{4\ep^2} 
      .
\end{equation*}

A key feature that distinguishes our setting from the $\Gamma$-convergence results in these frameworks is the relationship between energy bounds and the mass of the Jacobians. In the Yang-Mills-Higgs setting studied in \cite{PPS-CPAM-Gammaconv}, the gauge-invariant Jacobian $J(u,\nabla)$ is controlled pointwise by the energy density, that is
\begin{equation*}
    |J(u,\nabla)| \le |\nabla u |^2 + \ep^2 |F_\nabla|^2 +  \frac{(1-|u|^2)^2}{4\ep^2} . 
\end{equation*}
The gauge-invariant Jacobian is a $2$-form that plays a role analogous to the standard Jacobian $2$-form $Ju := du^1 \wedge du^2$ in the Ginzburg-Landau setting; we refer to \cite[Eq. (1.3)]{PPS-CPAM-Gammaconv} for the definition of $J(u,\nabla)$. Hence, in this setting, a uniform bound on the energies $E_\ep(u_\ep, \nabla_\ep) \le C $ as in \cite[Theorem 1.2]{PPS-CPAM-Gammaconv} automatically implies a uniform bound on the masses of the Jacobians $\|J(u_\ep,\nabla_\ep)\|_{L^1}$. As a consequence, the main challenge to be dealt with in the Yang-Mills-Higgs setting is not the extraction of a convergent subsequence from the Jacobians but to prove that the limit $(n-2)$-dimensional current is integral. 

On the other hand, in the classical work \cite{2005-Indiana-ABO}, by the pointwise estimate for the Jacobian
\begin{equation*}
    |Ju| \le |\nabla u|^n ,
\end{equation*}
the energy bound in the vorticity regime $G_\ep(u_\ep) \le C\abs{\log(\ep)}$ still implies a logarithmic control on the mass of the Jacobians $\|Ju_\ep\|_{L^1}$. Then, the proof of $\Gamma$-convergence relies on a delicate and highly nontrivial application of the degree estimates in \cite{Sandier-JFA-1998,1999-SIAM-Jerrard}. 

Our setting presents a departure from both these scenarios: the energy bound \eqref{hp:energy-bound} provides no immediate control on the mass of the Jacobians $\star Ju_p$, that here are not even $L^1$ functions, but are supported on $m$-dimensional sets and might have infinite mass even if the $p$-energy is finite. This feature represents a fundamental difference from the diffuse frameworks above and requires new ideas to establish compactness.

\subsubsection{The \texorpdfstring{$p$-energy}{p-energy} framework} 

The study of $p$-energy minimizing maps into spheres as $p$ approaches the critical Sobolev exponent was initiated by Hardt and Lin \cite{HL-CalcVar}, who analyzed sequences of energy minimizers $u_p: \Omega \to \Sp^1$ from planar domains as $p \to 2^-$, proving that they converge smoothly away from exactly $|d|$ singular points, where $d$ is the degree of $u_p |_{\partial \Omega}$. This was extended to higher dimensions in \cite{HardtLinChangyou}, where it is shown that, for $\Omega \subset \R^n$ and $p\to n^-$, energy minimizing maps $u_p: \Omega \to \Sp^{n-1}$ converge away from $|d|$ singularities that minimize a certain renormalized energy. 

From here, it is natural to ask what happens to the singular set of a sequence of $p$-energy minimizing maps in general dimension and codimension. Even though this question is extremely natural and, for example, its analogous in the Ginzburg-Landau framework is well-known \cite[Theorem I.1]{Riv-Lin-GL}, in the case of $p$-energy minimizing maps with general hypotheses it has only been addressed in an unpublished (but highly nontrivial) work of Hardt and Lin, which seems to appear in literature only in \cite{Lin-2011}. 

This work by Hardt and Lin concerns the asymptotic behavior of the energy densities of $p$-energy minimizing maps from an $(n+m)$-dimensional domain $\Omega \subset \R^{n+m}$ into $\Sp^{n-1}$, with fixed boundary conditions $g \in W^{1,n-1}(\partial \Omega; \Sp^{n-1})$. Observe that for $g \in W^{1,n-1}(\partial \Omega; \Sp^{n-1})$ its (tangential) Jacobian $\star J g $ is a well-defined integral current supported on $\partial \Omega$. With our notation, the result in \cite{Lin-2011} reads as follows.

\begin{teo}[Theorem 3.1 in \cite{Lin-2011}]\label{thm: Hardt-Lin unpublished} Let $n\ge 2$, $m\ge 0$ be integers, $p\in (n,1-n)$ and $\mu_p$ be the normalized energy densities 
\begin{equation}\label{eq: norm energy density}
    \mu_p := \frac{n-p}{\omega_n(n-1)^{p/2}} |\nabla u_p(x)|^p dx\mres \Omega 
\end{equation}
associated with maps $u_p \in W^{1,p}(\Omega;\Sp^{n-1})$ that minimize the $p$-energy in $\Omega$ with fixed boundary datum $g \in W^{1,n-1}(\partial \Omega; \Sp^{n-1})$.

Then, as $p \to  n^-$, a subsequence of $\mu_p$ weakly converges to a Radon measure $\mu$ such that $\supp(\mu) = \supp(\Sigma)$ and $\mu(\Omega) = \M_\Omega(\Sigma)$, where $\Sigma$ is an integral area-minimzing current in $\Omega$ with $\partial \Sigma = \star J g$.
\end{teo}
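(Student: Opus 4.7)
The plan is to derive Theorem~\ref{thm: Hardt-Lin unpublished} from the Dirichlet boundary-value version of Theorem~\ref{main_res}, namely Theorem~\ref{thm:bvp}. First, we obtain a uniform rescaled $p$-energy bound on the minimizers $u_p$: since $g\in W^{1,n-1}(\partial\Omega;\Sp^{n-1})$, its tangential Jacobian $\star Jg$ is an integral $(m-1)$-boundary on $\partial\Omega$, so by the standard Plateau problem for integral currents one can pick an integral $m$-current $\Sigma_0$ in $\Omega$ bounding it. Applying the $\Gamma$-limsup part of Theorem~\ref{thm:bvp} to $\Sigma_0$ produces comparison maps $v_p^{0}$ with boundary trace $g$ satisfying
\[
\limsup_{p\to n^-}(n-p)\int_\Omega|\nabla v_p^{0}|^p\,dx\le (n-1)^{n/2}\omega_{n-1}\M_\Omega(\Sigma_0),
\]
so minimality of $u_p$ gives the hypothesis~\eqref{hp:energy-bound}. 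The compactness part of Theorem~\ref{thm:bvp} then extracts a subsequence along which $\star Ju_p\to\gamma_n\Sigma$ in the flat topology of $\Omega$, for some integral $m$-current $\Sigma$ whose boundary is the prescribed multiple of $\star Jg$.

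Area-minimality of $\Sigma$ follows by a competitor argument. Given any integral $\Sigma'$ with the same boundary as $\Sigma$, a second application of the $\Gamma$-limsup produces maps $v_p^{\Sigma'}$ with trace $g$ whose rescaled $p$-energies are asymptotic to $(n-1)^{n/2}\omega_{n-1}\M_\Omega(\Sigma')$. Combining the $\Gamma$-liminf for $u_p$ with the minimality of $u_p$ versus $v_p^{\Sigma'}$, we obtain the chain
\[
(n-1)^{n/2}\omega_{n-1}\M_\Omega(\Sigma)\le \liminf_{p\to n^-}(n-p)\!\int_\Omega\!|\nabla u_p|^p\,dx\le \limsup_{p\to n^-}(n-p)\!\int_\Omega\!|\nabla v_p^{\Sigma'}|^p\,dx\le (n-1)^{n/2}\omega_{n-1}\M_\Omega(\Sigma'),
\]
hence $\M_\Omega(\Sigma)\le\M_\Omega(\Sigma')$. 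Taking $\Sigma'=\Sigma$ in the same chain forces all the inequalities to be equalities, so that the rescaled $p$-energy of $u_p$ itself converges to $(n-1)^{n/2}\omega_{n-1}\M_\Omega(\Sigma)$, which in the normalization~\eqref{eq: norm energy density} reads $\mu_p(\Omega)\to\M_\Omega(\Sigma)$.

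To identify the weak limit $\mu$ and its support, we localize the $\Gamma$-liminf: for every open $U\subset\Omega$, the compactness/liminf part of Theorem~\ref{main_res} applied to $u_p|_U$ (whose rescaled $p$-energies are uniformly bounded) combined with the fact that the flat limit of $\star Ju_p\mres U$ is $\gamma_n\Sigma\mres U$ yields $\liminf_{p\to n^-}\mu_p(U)\ge \M_U(\Sigma)$. Combining this local liminf with the global mass identity, a routine measure-theoretic argument (based on super/subadditivity of $\liminf$ and $\limsup$ applied to pairs $(U,\Omega\setminus\overline U)$ with $\|\Sigma\|(\partial U)=0$) upgrades the liminf to $\lim_{p\to n^-}\mu_p(U)=\M_U(\Sigma)$ for such $U$, and hence $\mu_p\rightharpoonup \|\Sigma\|$ weakly as Radon measures. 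Both $\supp\mu=\supp\Sigma$ and $\mu(\Omega)=\M_\Omega(\Sigma)$ follow immediately.

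\emph{The main obstacle} is encoding the Dirichlet boundary datum $g$ throughout the compactness and $\Gamma$-convergence statements, i.e., proving Theorem~\ref{thm:bvp}: one must guarantee that the boundary of the limit current $\Sigma$ is exactly prescribed by $\star Jg$, which typically requires extending the $u_p$ by a controlled filling of $g$ outside $\Omega$ and showing that no Jacobian mass is lost at $\partial\Omega$ when passing to the flat limit. Once this boundary bookkeeping is in place, the deduction above is a standard combination of minimality with the global $\Gamma$-convergence of the $p$-energies.
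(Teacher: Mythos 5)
Your proposal follows essentially the same route as the paper: the paper packages your first four steps into Corollary~\ref{cor: Hardt-Lin for currents} (uniform bound via a recovery sequence, compactness from Theorem~\ref{thm:bvp}, the competitor argument for minimality, and convergence of the total energies), and then identifies $\mu=|\Sigma|$ exactly as you do, by combining the localized liminf of Theorem~\ref{thm:bvp}$(i)$ with the total-mass identity. One caveat on your competitor step: you take an arbitrary integral $\Sigma'$ with $\partial\Sigma'=\partial\Sigma$, but the recovery sequence of Theorem~\ref{thm:bvp}$(ii)$ with trace $g$ exists only for $\Sigma'$ cobordant with $\Sigma_y$ in $\overline{\Omega}$, so what you actually obtain is minimality within that cobordism class; this coincides with minimality among all currents with boundary $\star Jg$ only when the relevant homology of $\overline{\Omega}$ is trivial, which is precisely the caveat the paper records in the remark following the statement.
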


Actually, Theorem \ref{thm: Hardt-Lin unpublished} holds only if the $n$-th homology group of $\overline{\Omega}$ is trivial, otherwise $\Sigma$ is area-minimizing only in a suitable cobordism class depending on $g$ (cf. Section \ref{sec: boundary val problems} below or \cite[Theorem 5.5]{2005-Indiana-ABO}). 

A significant shift in generality in this area came with the work of Stern \cite{Stern2018}, who removed the minimality assumption and studied general sequences of stationary $p$-harmonic maps $u\in W^{1,p}(M; \Sp^1)$, in an ambient Riemannian manifold $(M,g)$, satisfying only the natural energy growth condition $\| \mu_p\|(M) \le C$ as $p\to 2^-$ (where $\mu_p$ is defined as in Theorem \ref{thm: Hardt-Lin unpublished}). Here, Stern proved that the singular sets converge to the support of a stationary rectifiable $(n-2)$-varifold $V$ with density bounded below. Moreover, in dimension $n=2$, he also shows that the density of $\|V\|$ is quantized, taking values in $2\pi \N$. This result mirrors the integrality of point vortices in the Ginzburg-Landau framework but is achieved through a distinct blow-up analysis tailored to $p$-harmonic maps.

More recently, in \cite{VS-VV-CalcVar-23}, the authors studied the behavior of minimizing $p$-harmonic maps from two-dimensional domains, with values in general compact Riemannian manifolds, as $p$ approaches $2^-$. In \cite{BVSVV}, they also considered minimizing maps defined on three-dimensional domains but under the additional assumption that the fundamental group of the target manifold is finite. These works generalize the target space and obtain several interesting results but are restricted to minimizing maps and low-dimensional domains.

Our Theorem~\ref{main_res} provides a new perspective on this line of work. While previous results focused either on minimizers \cite{HL-CalcVar, HardtLinChangyou, Lin-2011, VS-VV-CalcVar-23, BVSVV} or stationary points \cite{Stern2018}, we establish flat compactness of Jacobians and $\Gamma$-convergence of the energies for general sequences only satisfying the energy growth condition \eqref{hp:energy-bound}, and we obtain that any limit of the Jacobians is integral. 

As a corollary of our work, since we also establish a $\Gamma$-convergence result with boundary conditions (see Theorem~\ref{thm:bvp}), we deduce both a new proof of Theorem \ref{thm: Hardt-Lin unpublished} by Hardt and Lin, and also an ``oriented version" of it in Corollary~\ref{cor: Hardt-Lin for currents}. We stress again that our proof is completely variational and avoids relying on the first variation of the energy.

Finally, as a further byproduct of our analysis, in Corollary~\ref{cor: correnti compactness} we obtain asymptotic estimates for the minimal $p$-energy of maps with prescribed singularities in any dimension and codimension. This problem was previously studied in \cite{Hardt-Chen-prescribed} and \cite[Section~5]{HardtLinChangyou} in the case in which $m=0$ and the singularities do not depend on $p$ (see also \cite[Sections~1.8,~4.5.2 and~13.6.2]{Bre-Mi}). We remark that here, in the case $m=0$, we also obtain some estimates for the problem \eqref{min_prescr_J} when $p\in (p_0,n)$ for some $p_0\in (n-1,n)$ that is independent of the prescribed Jacobian (see Remark~\ref{rem:fixed_p}), while in \cite{Hardt-Chen-prescribed,HardtLinChangyou} all the energy estimates are valid only when $p$ is larger than some $p_0$ that strongly depends on the prescribed singular sets, for example through the minimal distance between two different singular points.

\vspace{-5pt}

\paragraph{Authors' note} After the first version of this work was completed and shared with some experts, we were informed by Daniel~Stern that, for maps between compact manifolds without boundary, a different proof of the liminf and compactness part of Theorem~\ref{main_res} is contained in Chapter~12 of his PhD thesis \cite{Stern:PhD}. Apparently, his result has gone unnoticed in the subsequent literature. The general strategy of the proof in \cite{Stern:PhD} presents some similarities with ours. Nevertheless, the key ingredients of the $0$-dimensional estimate and the approximation of maps with ones with controlled Jacobians are completely different. 

\subsubsection{The fractional framework}

We mention that recently, following a suggestion in \cite{SerraSurv}, the authors of the present work proposed in \cite{codim2-frac} an approximation framework for the area in codimension two in building on the theory of fractional Sobolev energies. Precisely, for $s\in (0,1)$, in \cite{codim2-frac} we introduced a notion of fractional $s$-mass $\M_s$ for $(n-2)$-dimensional closed boundaries, defined via minimization of the $W^{(1+s)/2,2}$-energy among maps that link (in a suitable sense) the surface. We proved that this fractional $s$-mass (appropriately rescaled by a factor depending on $s$) $\Gamma$-converges, with respect to the flat topology of boundaries, to the standard $(n-2)$-dimensional Hausdorff measure with multiplicity. This provides an approach to the approximation of the area in codimension two in the spirit of the fractional perimeter that, we believe, may offer advantages in regularity and compactness properties compared to classical approximations.

\subsection{Organization of the paper}

The paper is organized as follows. In Section \ref{sec: sec def and notations}, we recall key notions regarding currents and Jacobians of Sobolev maps that we need for our analysis. Section \ref{sec: approximation Jac} provides some approximation results for Sobolev maps with quantitative estimates that will be used crucially in the subsequent sections. Section \ref{sec: zero dim case} is devoted to the proof of the zero-dimensional case, where we establish our Theorem \ref{main_res} for $m=0$ with an elementary, yet nontrivial, proof. In Section \ref{sec: compactness and liminf general dim}, we extend our analysis to general dimension and codimension, proving the compactness and lower bound (Section \ref{sbs: compact and lower bound}) as well as the upper bound (Section \ref{sbs: upper bound}) parts of Theorem \ref{main_res}. Then, we address boundary value problems in Section \ref{sec: boundary val problems}. Lastly, Section~\ref{sec:appendix} is an appendix which contains the proof of two lemmas.

\section{Definitions and notation}\label{sec: sec def and notations}

In this section, we recall some notions concerning currents and Jacobians of Sobolev maps that are fundamental in this paper. We refer to \cite{1960-FF,Federer-book,ABO2-singularities,2005-Indiana-ABO,Bre-Mi} for a more comprehensive treatment of these notions.

\subsection{Currents and flat norm}

Let us briefly recall some notations and basic definitions about currents. Let $\Omega \subset \R^n$ be a general open set and, for $k \in \{0, \dotsc, n\}$, let $\mathcal{D}^k(\Omega)$ denote the space of smooth $k$-forms with compact support in $\Omega$. 

A $k$-dimensional \emph{current} in $\Omega$ is a continuous linear functional on $\D^k(\Omega)$. The \textit{boundary} of a $k$-current $T$ is the $(k-1)$-current $\partial T$ defined by $ \langle \partial T,\omega \rangle = \langle T, d\omega \rangle$ for all $\omega \in \mathcal{D}^{k-1}(\Omega)$, where $d\omega$ is the exterior differential of the form $\omega$. For a $k$-current $T$, the \textit{mass} of $T$ in an open set $A$, denoted by $\M_A(T)$, is defined as
\begin{equation*}
    \M_A(T) := \sup \big\{ T(\omega ) \, : \, \omega \in \D^k(A) \,, |\omega(x)| \le 1 \s \forall \, x\in A \big    \} . 
\end{equation*}

If $T$ has (locally) finite mass, it can be represented  as a (locally) bounded Borel measure with values in $\bigwedge_k(\R^{n})$; in this case we denote by $|T|$ the variation of the measure $T$. Note that, with this notation, $\M_A(T) = |T|(A)$. 

A $k$-current $T$ is called \emph{rectifiable} if it can be represented by integration on a $k$-rectifiable set, up to an integer multiplicity, namely if
$$\langle T,\omega \rangle =\int_{M} \theta(x) \, \omega_x(\tau(x)) \,d\mathcal{H}^k(x) \qquad\forall \omega \in \D^k(\Omega),$$
where $M$ is a $k$-rectifiable set, $\theta:M\to \Z\setminus\{0\}$ is an integer-valued measurable function, and $\tau(x)$ is a measurable choice of a unitary $k$-vector spanning $T_x M$ at $\mathcal{H}^k$-almost every $x\in M$. In this case we write $T=[M,\theta,\tau]$. We observe that, in the case of a rectifiable current $T=[M,\theta,\tau]$, its mass can be expressed as
$$\M_A(T)=\int_{M \cap A} |\theta(x)|\,d\mathcal{H}^k(x).$$

A $k$-current $T$ is said \emph{integral} if both $T$ and $\partial T$ are rectifiable. If $T=\partial S$ for some integral $(k+1)$-current $S$, then we call $T$ an integral $k$-boundary. We point out that integral $k$-boundaries do not necessarily represent trivial elements of the $k$-homology of $\Omega$ since the current $S$ might not be compactly supported in $\Omega$.

If $\Sigma$ is a smooth oriented $k$-submanifold of $\Omega$, it naturally induces the integral current 
\begin{equation*}
    \lcurr \Sigma \rcurr:= [\Sigma,1,\tau_\Sigma] , 
\end{equation*}
where $\tau_\Sigma$ is the orientation of $\Sigma$. Sometimes, if there is no risk of confusion, we just write $\Sigma$ instead of $\lcurr \Sigma \rcurr$, so we denote with the same symbol the submanifold and the associated integral current.

For an integral $k$-boundary $T$ we define the \emph{flat norm} of $T$ as
\begin{equation*}
    \F_\Omega (T):=\inf\{\M_\Omega (S): \text{$S$ is any $(k+1)$-current with $\partial S = T $ in $\Omega$}\}.
\end{equation*}

This can be expressed equivalently by duality as  
\begin{equation*}
    \F_\Omega(T):=\sup\{\left< T,\psi\right>: \psi \in \D^k (\Omega), \, \|d \psi\|_{L^\infty(\Omega)}\leq 1) \} . 
\end{equation*}

\begin{rem} 
We point out that in our previous work \cite{codim2-frac} we used a slightly different definition of the flat norm for integral $k$-boundaries, that is
\begin{equation*}
\F_\Omega^{\rm \, I}(T):=\inf\{\M_\Omega (S): \text{$S$ is an integral $(k+1)$-current with $\partial S =T$ in $\Omega$}\}.
\end{equation*}

Actually, this notion is equivalent to the previous one when $k\in \{0,n-2,n-1\}$, while it might differ in the other cases (see Proposition~4.1 and Proposition~4.2 in \cite{Bre-Mi}).

On the other hand, to our knowledge, it is an open problem to determine whether these two notions induce the same topology on integral currents in the cases in which they do not coincide. Anyway, in the codimension two case considered in \cite{codim2-frac}, that is $k=n-2$, these two notions are the same.
\end{rem}

If $n\geq 2$, $k_1,k_2\geq 1$ are integers with $k_1+k_2=n-1$, and $\Sigma_1, \Sigma_2$ are disjoint, smooth, closed, oriented connected submanifolds of $\R^n$, with dimensions $k_1$ and $k_2$ respectively, we denote by $\link(\Sigma_1, \Sigma_2) \in \Z $ the \emph{linking number} between $\Sigma_1$ and $\Sigma_2$. This notion can be defined in a few different, but equivalent, ways, and we refer to \cite[Section 2.1]{codim2-frac} for the details on different definitions and their equivalence. For example, if $N_1 \subset \R^n$ is a $(k_1+1)$-dimensional submanifold with $\partial N_1 =\Sigma_1$ intersecting $\Sigma_2$ transversally, the linking number can be defined as
$$\link (\Sigma_1,\Sigma_2):=\sum_{x \in N_1\cap \Sigma_2} s(x),$$
where $s(x)=1$ if the orientation on $T_x N_1 \oplus T_x \Sigma_2=\R^n$ induced by the orientations on $N_1$ and $\Sigma_2$ agrees with the orientation of $\R^n$, and $s(x)=-1$ otherwise.

\subsection{Jacobians}

From now on, unless otherwise stated, $\Omega$ will be a bounded Lipschitz domain in $\R^{n+m}$, with $n\geq 2$ and $m\geq 0$, and we will deal with functions defined on such domains and with values in $\R^n$ or, more often, in $\Sp^{n-1}$. We recall that for a Sobolev map $u:\Omega\to \R^n$ taking values in $\Sp^{n-1}$ just means that $|u|=1$ almost everywhere.

In this setting, for $u\in W^{1,n-1}(\Omega;\Sp^{n-1})$ we set
$$\widehat{du_i}:=du_{1} \wedge \dots \wedge du_{i-1} \wedge du_{i+1} \wedge\dots\wedge du_{n},$$
and we consider the following $(n-1)$-form
$$ju:=\sum_{i=1}^{n} (-1)^{i-1} u_i \widehat{du_i}.$$

We observe that, since $u\in W^{1,n-1}(\Omega;\Sp^{n-1})$, the forms $ju$ are in $L^1$, so we can define the following distributional $n$-form
$$Ju:=\frac{1}{n} d(ju),$$
which is called the \emph{Jacobian} of $u$. 

We can also associate to $Ju$ the $m$-current $\star Ju$, which acts on forms in the following way
\begin{equation}\label{def: star J def}
    \langle \star Ju , \psi \rangle := \frac{(-1)^{m+1}}{n} \int_{\Omega} d\psi \wedge ju ,  \qquad \forall \psi \in \D_{m}(\Omega).
\end{equation}

It turns out that the Jacobian of maps in $W^{1,n-1}(\Omega,\Sp^{n-1})$ always belong to some special class of $m$-currents, which are the boundaries of integer rectifiable currents, as the following result shows.

\begin{prop}[Theorem~3.8 in \cite{ABO2-singularities} and Theorem~4.9 and Theorem~4.10 in \cite{Bre-Mi}]\label{prop:Jacobian_integral}
Let $\Omega\subseteq \R^{n+m}$ be an open set and let $u\in W^{1,n-1}(\Omega;\Sp^{n-1})$ be  map. Let us set $\gamma_n:=\omega_{n-1}/n$. Then
$$\star Ju=\gamma_n \partial N,$$
for some integer rectifiable $(m+1)$-current $N$. 

If in addition $u\in C^0(\Omega\setminus \Sigma)$, for some smooth connected oriented $m$-dimensional submanifold $\Sigma$ without boundary (in $\Omega$), then $\partial N=d  \lcurr \Sigma \rcurr$ for some $d\in\N$ that equals the degree of $u$ on $(n-1)$-spheres $S$ with $\link(S,\Sigma)=1$.
\end{prop}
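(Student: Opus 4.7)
Plan. The proof relies on two main ingredients: a smooth approximation of $u$ with controlled singular set, and the Federer--Fleming closure/compactness theorem for integer rectifiable currents. First, I would approximate $u\in W^{1,n-1}(\Omega;\Sp^{n-1})$ strongly in $W^{1,n-1}$ by maps $u_k \in C^\infty(\Omega\setminus \Sigma_k;\Sp^{n-1})$, where each $\Sigma_k$ is an $m$-dimensional polyhedral set, by a Bethuel-type density theorem of the form used in \cite{ABO2-singularities, Bre-Mi}. For such $u_k$ one has $d(j u_k) = n\, du_{k,1}\wedge\cdots\wedge du_{k,n} = 0$ away from $\Sigma_k$, since this $n$-form is the pullback of a top-degree form on $\R^n$ by a map factoring through the $(n-1)$-dimensional sphere. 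Hence $\star J u_k$ is supported on $\Sigma_k$, and a Stokes' theorem computation on a small normal $(n-1)$-disk around each open $m$-face of $\Sigma_k$ shows that the multiplicity on that face is exactly $\gamma_n$ times the degree of $u_k$ on the corresponding linking $(n-1)$-sphere. Choosing a ``filling'' $(m+1)$-current $N_k$ realising these integer multiplicities, one thus obtains $\star J u_k = \gamma_n \partial N_k$ with $N_k$ integer rectifiable and $\M(N_k)$ locally controlled by the $W^{1,n-1}$-energy of $u_k$.

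Since $|u_k|=1$ a.e.\ and $\nabla u_k \to \nabla u$ in $L^{n-1}$, the $(n-1)$-forms $j u_k$ converge strongly in $L^1_{\mathrm{loc}}$ to $j u$, so $\star Ju_k \to \star Ju$ in the flat topology. Applying the Federer--Fleming compactness and closure theorems locally to the sequence $\{N_k\}$, a subsequence converges in flat norm to an integer rectifiable $(m+1)$-current $N$ satisfying $\star J u = \gamma_n \partial N$, which proves the first assertion.

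For the second assertion, let $u\in C^0(\Omega\setminus \Sigma)$ with $\Sigma$ smooth, oriented, connected, of dimension $m$ and without boundary in $\Omega$. By connectedness of $\Sigma$ and continuity of $u$ outside $\Sigma$, the degree $d$ of $u|_S$ on a transverse $(n-1)$-sphere $S$ with $\link(S,\Sigma)=1$ is a single integer independent of $S$, which we may take nonnegative after possibly reversing the orientation of $\Sigma$. To identify $\star J u = \gamma_n d\lcurr \Sigma \rcurr$, I would take $\psi \in \D^m(\Omega)$, consider a tubular $\rho$-neighborhood $U_\rho$ of $\Sigma$, and apply Stokes' theorem on $\Omega\setminus U_\rho$, where $u$ can be smoothly approximated into $\Sp^{n-1}$ and hence $d(j u) = 0$:
\begin{equation*}
    \frac{(-1)^{m+1}}{n}\int_\Omega d\psi \wedge j u = \frac{(-1)^{m+1}}{n}\int_{\partial U_\rho} \psi \wedge j u + o(1)\qquad \text{as }\rho\to 0^+.
\end{equation*}
A normal-slice decomposition of $\partial U_\rho$ as an $S^{n-1}_\rho$-bundle over $\Sigma$, combined with the classical identity $\int_{S^{n-1}_\rho(x)} j u = \omega_{n-1}\deg(u|_{S^{n-1}_\rho(x)}) = \omega_{n-1} d$ for each $x\in \Sigma$, makes the boundary integral converge to $\pm \omega_{n-1}d\int_\Sigma \psi$. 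In view of \eqref{def: star J def}, this identifies $\star J u = \gamma_n d\lcurr \Sigma \rcurr$, and therefore $\partial N = d\lcurr \Sigma \rcurr$.

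The main obstacle I expect is the uniform control of the masses $\M(N_k)$ during the approximation: a generic $W^{1,n-1}$ sphere-valued map can have extremely wild topological singularities, and one must be careful that the fillings $N_k$ do not blow up in mass as $k\to \infty$, so that the Federer--Fleming closure theorem can be applied in the flat limit. This uniform bound is precisely the delicate content of the approximation/structure results in \cite{ABO2-singularities, Bre-Mi}; once it is in hand, the rest of the argument is standard closure/compactness plus the direct local computation underlying the second claim.
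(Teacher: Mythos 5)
This proposition is not proved in the paper at all: it is quoted verbatim from the literature (Theorem~3.8 in \cite{ABO2-singularities}, Theorems~4.9--4.10 in \cite{Bre-Mi}), so the only comparison available is with the proofs in those references. Your outline follows the right general philosophy (reduce to maps with nice singular sets, compute the Jacobian there, pass to the limit), and the second half --- the identification $\star Ju=\gamma_n d\lcurr\Sigma\rcurr$ via Stokes on a tubular neighborhood and the fibrewise degree identity $\int_{S^{n-1}_\rho(x)}ju=\omega_{n-1}d$ --- is correct and essentially what the references do.

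The first half, however, has a genuine gap at the limit passage. The Federer--Fleming compactness and closure theorems require a uniform local bound on \emph{both} $\M(N_k)$ and $\M(\partial N_k)$; without the boundary bound, a flat (or weak) limit of integer rectifiable currents with equibounded mass need not be rectifiable (think of finitely many short unit-multiplicity segments of total length $1$ spreading out into a diffuse limit). Here $\M(\partial N_k)=\M(\star Ju_k)/\gamma_n$, and this is precisely the quantity that is \emph{not} controlled by the $W^{1,n-1}$-energy: as the introduction of this very paper stresses, the Jacobian of a sphere-valued map below the critical exponent can have locally infinite mass (already for $n=2$, $m=0$, a $W^{1,1}(B^2;\Sp^1)$ map can carry infinitely many dipoles with finite flat norm but infinite total variation). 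So the approximants $u_k$ will in general have $\M(\star Ju_k)\to\infty$ locally, and the closure theorem cannot be invoked. You correctly flag the masses $\M(N_k)$ as delicate, but the real obstruction is the boundary masses, and it cannot be delegated to the density theorem. The cited proofs avoid the closure theorem altogether: they construct $N$ \emph{directly} from the limit map, as (an average over $y\in B^n_1$ and over directions of) the preimage under a mollified or $B^n$-valued extension of $u$ of a generic ray joining $y$ to $\Sp^{n-1}$; rectifiability of $N$ then comes from the coarea/slicing formula for Sobolev maps, its mass is controlled on average by $\|\nabla u\|_{L^{n-1}}^{n-1}$ (a ray is one-dimensional, so its preimage is $(m+1)$-dimensional with coarea factor of order $|\nabla u|^{n-1}$), and the identity $\star Ju=\gamma_n\partial N$ is verified by testing against $m$-forms and passing to the limit in the mollification parameter, with no integrality-of-the-limit argument needed.
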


The next result shows some continuity estimates for the Jacobian, as an operator from some functional spaces into $m$-currents, endowed with the flat norm. Since we have been able to find only particular cases of this statement in the literature (that is, when either $m=0$ or $n=2$), we include in the appendix the proof, which anyway is completely analogous to the ones for those special cases.

\begin{teo}[see Theorem~1 in \cite{BN-Invent} and Corollary~1.7 in \cite{Bre-Mi}]\label{teo:BN-Invent}
Let $\Omega\subset \R^{n+m}$ be a bounded Lipschitz domain, let $p\in [n-1,+\infty]$ and let $q\in [1,+\infty]$ be such that $(n-1)/p + 1/q=1$. Then there exists a constant $C_{n}>0$ such that for every $u,v\in W^{1,p}(\Omega;\R^n)\cap L^{q}(\Omega;\R^n)$ it holds that
\begin{equation}\label{th:BN_invent}
\F_\Omega(\star Ju-\star Jv)\leq C_{n} \|u-v\|_{L^q}(\|\nabla u\|_{L^p} + \|\nabla v\|_{L^p})^{n-1} .
\end{equation}
\end{teo}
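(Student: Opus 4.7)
The plan is to use the dual characterization of the flat norm,
$$\F_\Omega(\star Ju-\star Jv)=\sup\bigl\{\langle \star Ju-\star Jv,\psi\rangle\st \psi\in\D^m(\Omega),\ \|d\psi\|_{L^\infty(\Omega)}\leq 1\bigr\},$$
so that it suffices to bound the pairing with a single test form uniformly in terms of $\|d\psi\|_{L^\infty}$ and the norms of $u,v$. I would first prove the estimate for smooth $u,v$, using a double integration by parts combined with a telescoping decomposition of the Jacobian difference, and then extend it to $W^{1,p}\cap L^q$ maps by density.

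\textbf{Smooth case: reduction to $\int\psi\wedge(Ju-Jv)$.} For $u,v\in C^\infty(\overline{\Omega};\R^n)$, starting from \eqref{def: star J def} and applying Stokes' theorem to the compactly supported form $\psi\wedge ju$, combined with the classical identity $d(ju)=n\,Ju$ with $Ju=du_1\wedge\cdots\wedge du_n$, yields
$$\langle\star Ju,\psi\rangle=\int_\Omega\psi\wedge Ju,$$
and similarly for $v$. I would then expand the difference as a telescoping sum
$$Ju-Jv=\sum_{k=1}^n du_1\wedge\cdots\wedge du_{k-1}\wedge dw_k\wedge dv_{k+1}\wedge\cdots\wedge dv_n,\qquad w_k:=u_k-v_k,$$
obtained by replacing one $dv_k$ at a time with $du_k$. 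Each summand $\omega_k=\alpha_k\wedge dw_k\wedge\beta_k$ carries exactly one $dw_k$ factor and $n-1$ other closed one-form factors from $\{du_j\}_{j<k}\cup\{dv_j\}_{j>k}$. Since $\alpha_k,\beta_k$ are closed (wedges of closed one-forms), each summand is exact, $\omega_k=\pm d(\alpha_k\wedge w_k\wedge\beta_k)$, and a second application of Stokes' theorem (again using compact support of $\psi$) transfers the derivative from $w_k$ onto $\psi$:
$$\int_\Omega\psi\wedge\omega_k=\pm\int_\Omega d\psi\wedge\alpha_k\wedge w_k\wedge\beta_k.$$

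\textbf{H\"older step.} Since $\alpha_k$ is a product of $k-1$ one-forms in $L^p$ and $\beta_k$ a product of $n-k$ one-forms in $L^p$, and $w_k\in L^q$ with exponents satisfying $1/q+(k-1)/p+(n-k)/p=1$ (precisely the hypothesis $(n-1)/p+1/q=1$), the generalized H\"older inequality gives
$$\left|\int_\Omega d\psi\wedge\alpha_k\wedge w_k\wedge\beta_k\right|\leq \|d\psi\|_{L^\infty}\|w_k\|_{L^q}\|\nabla u\|_{L^p}^{k-1}\|\nabla v\|_{L^p}^{n-k}.$$
Summing over $k$, using the elementary bound $\sum_{k=1}^n a^{k-1}b^{n-k}\leq (a+b)^{n-1}$, and taking the supremum over $\psi$ with $\|d\psi\|_{L^\infty}\leq 1$, I obtain \eqref{th:BN_invent} for smooth maps.

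\textbf{Extension and main obstacle.} For general $u,v\in W^{1,p}(\Omega;\R^n)\cap L^q(\Omega;\R^n)$, I would approximate by smooth maps $u_\varepsilon,v_\varepsilon$ via mollification (combined with an extension past the Lipschitz boundary). Since $ju_\varepsilon\to ju$ in $L^1$ by continuity of multilinear products on Lebesgue spaces, the currents $\star Ju_\varepsilon$ converge to $\star Ju$ weakly; the dual formula for $\F_\Omega$ is lower semicontinuous under weak convergence of currents, which allows passing the smooth estimate to the limit. Conceptually, the core of the proof lies in the double integration by parts: the first produces the clean expression $\int\psi\wedge Ju$, and the second is crucial because it is the only mechanism to arrange the gradient factor onto $d\psi$ (which is the quantity controlled by the flat norm) while keeping $u-v$ undifferentiated. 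Technically, I expect the main delicate point to be the endpoint cases $p\in\{n-1,\infty\}$, where the smooth approximation is only weak-$*$ convergent in one of the two norms, so that one must verify carefully the semicontinuity/continuity of both sides of the estimate with respect to that convergence.
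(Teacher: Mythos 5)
Your proof is correct, and its core mechanism coincides with the paper's: dualize the flat norm, decompose the Jacobian difference into $n$ pieces each carrying exactly one undifferentiated factor $u_j-v_j$ together with $n-1$ closed one-form factors, move the remaining derivative onto $d\psi$ by Stokes, and close with generalized H\"older using $(n-1)/p+1/q=1$ and the bound $\sum_{k=1}^n a^{k-1}b^{n-k}\leq(a+b)^{n-1}$. The one genuine divergence is the algebraic decomposition. You first convert $\langle\star Ju,\psi\rangle$ into $\int_\Omega\psi\wedge Ju$ (an extra integration by parts, valid for smooth maps) and then telescope $du_1\wedge\cdots\wedge du_n-dv_1\wedge\cdots\wedge dv_n$ in a single sum over $k$. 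The paper instead works directly with the $(n-1)$-form $ju-jv$, splitting $u_i\widehat{du_i}-v_i\widehat{dv_i}=(u_i-v_i)\widehat{du_i}+v_i(\widehat{du_i}-\widehat{dv_i})$ and telescoping the second summand over a further index $j$, which produces the two-index family $X_{i,j}$ of \eqref{eq:jacobian_sum_X}. Your single telescope has leaner bookkeeping (one sum, uniform structure of each term) at the cost of the preliminary reduction to $\psi\wedge Ju$; the paper's version stays closer to the definition \eqref{def: star J def} of $\star Ju$ but handles a double sum with a slightly messier sign computation. The estimates that come out are identical. Both versions need the density step you describe; the paper is terse on this point and delegates the verifications to \cite{BN-Invent}, whereas you correctly flag the endpoint cases $p\in\{n-1,\infty\}$ as the delicate place for the approximation argument.
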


\section{Quantitative approximation of Sobolev maps and their Jacobians}\label{sec: approximation Jac}

In this section, we prove some approximation results for Sobolev maps with quantitative error estimates. Actually, we follow quite closely the strategy of \cite[Chapter~10]{Bre-Mi}, which in turn is based on the so-called \emph{projection-averaging technique} introduced in \cite[Section~6]{HL-CPAM} (see also \cite{BZ-density,Bethuel-Acta}). However, since we need more precise estimates, we need to be slightly more careful in the choice of parameters and to compute more explicitly some rates of convergence.

Before entering into details, let us introduce some notation that we use in the rest of the paper. In many proofs, we use the letter $C$ to denote possibly different constants, but we specify each time which parameter these constants depend on.

We denote by $B_r ^d$ the ball in $\R^d$ centered in the origin and with radius equal to $r>0$. Let us fix a non-negative radial function $\rho\in C^\infty _c (\R^{n+m};[0,+\infty))$ supported in $B^{n+m} _1$, and with $\int\rho =1$. Let us set
\begin{equation}\label{defn:rho_ep}
\rho_\ep(x):=\frac{1}{\ep^{n+m}}\rho\left(\frac{x}{\ep}\right).
\end{equation}

We recall some simple estimates for the $L^p$ distance between a Sobolev function $u$ and its regularization $u*\rho_\ep$. The following Lemma is essentially \cite[Lemma~15.50]{Bre-Mi}, when $s\in (0,1)$, while it is classical in the case $s=1$ (it follows, for example, from \cite[Proposition~9.3]{Brezis-FAbook}).

\begin{lemma}\label{lemma:sobolev-convolution}
Let $\Omega\Subset U \subset \R^{n+m}$ be bounded Lipschitz domains and let $s\in (0,1)$ and $p \geq 1$ be real numbers. Let $u\in W^{s,p}(U;\R^n)$ be a map, and let us set $u_\ep:=u*\rho_\ep$ for every $\ep<\dist(\Omega , \partial U)$. Then it holds that
$$\|u_\ep - u\|_{L^p(\Omega)} \leq \|\rho\|_{L^\infty}\cdot \ep^s [u]_{W^{s,p}(U)}.$$

If, instead, $u\in W^{1,p}(U;\R^n)$, then
$$\|u_\ep - u\|_{L^p(\Omega)} \leq  \ep \|\nabla u\|_{L^{p}(U)}.$$
\end{lemma}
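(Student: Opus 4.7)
The plan is to establish both estimates by the same general scheme: write the mollification error as an integrated finite difference, apply Jensen's inequality to move the $p$-th power inside the integral, and then bound the resulting difference quotient by the appropriate seminorm on $U$, using the fact that $\rho_\ep$ is supported in $B_\ep^{n+m}$ together with the condition $\ep<\dist(\Omega,\partial U)$ to guarantee that all translated evaluation points of $u$ remain in $U$.

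For the fractional estimate, I would start from
\begin{equation*}
u_\ep(x)-u(x)=\int \rho_\ep(x-z)\bigl(u(z)-u(x)\bigr)\,dz,
\end{equation*}
and apply Jensen's inequality with respect to the probability measure $\rho_\ep(x-z)\,dz$ to obtain
\begin{equation*}
|u_\ep(x)-u(x)|^p\le \int \rho_\ep(x-z)\,|u(z)-u(x)|^p\,dz.
\end{equation*}
The key trick is then to rewrite the kernel as $\rho_\ep(x-z)=\rho_\ep(x-z)\,|x-z|^{n+m+sp}/|x-z|^{n+m+sp}$, and exploit the fact that on the support of $\rho_\ep$ one has $|x-z|\le \ep$, so that
\begin{equation*}
\rho_\ep(x-z)\le \|\rho\|_{L^\infty}\,\frac{\ep^{sp}}{|x-z|^{n+m+sp}}\,\mathbf{1}_{\{|x-z|\le \ep\}}.
\end{equation*}
Integrating in $x\in\Omega$ and noting that the relevant $z$'s lie in $U$, the remaining double integral is bounded by $[u]_{W^{s,p}(U)}^p$, which yields the claimed estimate (up to absorbing a power $1/p$ into the constant, which is harmless).

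For the classical case $u\in W^{1,p}$, I would first do the proof for smooth $u$ and then conclude by density. Writing
\begin{equation*}
u(x-y)-u(x)=-\int_0^1 \nabla u(x-ty)\cdot y\,dt,
\end{equation*}
applying Jensen twice (once with respect to the probability measure $\rho_\ep(y)\,dy$ and once with respect to $dt$ on $[0,1]$), and using $|y|\le \ep$ on $\supp(\rho_\ep)$, leads to
\begin{equation*}
|u_\ep(x)-u(x)|^p\le \ep^p\int\rho_\ep(y)\int_0^1 |\nabla u(x-ty)|^p\,dt\,dy.
\end{equation*}
Integrating in $x\in\Omega$ and using the translation invariance of the Lebesgue measure, together with the fact that $x-ty\in U$ for all admissible $(x,y,t)$, gives directly $\|u_\ep-u\|_{L^p(\Omega)}\le \ep\|\nabla u\|_{L^p(U)}$.

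Neither step is a genuine obstacle: both arguments are standard mollification calculations, and the only point requiring a bit of care is making sure, when bounding the kernel in the fractional case, that one inserts the factor $|x-z|^{n+m+sp}$ in a way that produces exactly the Gagliardo quotient with the correct $\ep^s$ rate, rather than losing this sharp rate by naively estimating $\rho_\ep\le \|\rho\|_{L^\infty}\ep^{-(n+m)}$.
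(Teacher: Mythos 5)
Your proof is correct, and since the paper does not prove this lemma itself but merely cites \cite[Lemma~15.50]{Bre-Mi} for the fractional case and \cite[Proposition~9.3]{Brezis-FAbook} for the Sobolev case, there is no in-paper argument to compare against; your route (write the mollification error as an averaged finite difference, apply Jensen to pull the $p$-th power inside, bound the mollification kernel by the Gagliardo kernel $|x-z|^{-(n+m+sp)}$ on $\{|x-z|\le\ep\}$ in the fractional case, and use the fundamental theorem of calculus plus Fubini and translation invariance in the $W^{1,p}$ case) is precisely the standard argument those references use, and the hypothesis $\ep<\dist(\Omega,\partial\Omega)$ is invoked exactly where it is needed to keep the shifted evaluation points inside $U$.

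One small remark on the constant in the fractional estimate: after taking $p$-th roots, your bound reads $\|\rho\|_{L^\infty}^{1/p}\,\ep^s[u]_{W^{s,p}(U)}$, while the statement displays $\|\rho\|_{L^\infty}\,\ep^s[u]_{W^{s,p}(U)}$. These agree only when $p=1$ or $\|\rho\|_{L^\infty}=1$; your constant is in fact the sharper one whenever $\|\rho\|_{L^\infty}\ge 1$. In the paper's application (the step in the proof of Proposition~\ref{prop:approx} producing $\|u_k-u_{k,\ep}\|_{L^{r_2}(\Omega)}\le \|\rho\|_{L^\infty}\cdot\ep^{1/3}[u_k]_{W^{1/3,r_2}(U)}$) the constant is immediately absorbed into a generic $C=C(n,m,\Omega,U)$, so the discrepancy is immaterial, exactly as you observe. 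The $W^{1,p}$ estimate you obtain has the stated sharp constant~$\ep$.
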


We are now ready to state and prove the main result of this section, which allows to replace a general sequence of maps with bounded rescaled $p$-energy with more regular maps, satisfying some quantitative estimates. We remark that if $m=0$, then a $0$-submanifold is just a finite collection of points.

\begin{prop}\label{prop:approx}
Let $\Omega\Subset U\subset \R^{n+m}$ be bounded Lipschitz domains, and let $\{p_k\}\subset (n-1,n)$ be a sequence of real numbers such that $p_k\to n^-$. For every $k\in \N$, let $u_k\in W^{1,p_k}(U;\Sp^{n-1})$ be a map, and let us assume that there exists a constant $E>0$ such that
$$(n-p_k)\int_{U} |\nabla u_k|^{p_k}\,dx \leq E \qquad \forall k\in\N.$$

Let also $\delta\in (0,1/8)$ be a real number, and for every $k$ sufficiently large let $\tilde{p}_k\in (n-1,p_k)$ and $q_k>n$ be defined by
$$\tilde{p}_k:=p_k-\delta(n-p_k), \qquad \frac{n-1}{\tilde{p}_k}+\frac{1}{q_k}=1.$$

Then there exist constants $C_1=C_1(n,m,\Omega,U,E)>0$ and $C_2=C_2(n,m,\Omega,U,\delta,E)>0$, such that for every $k$ sufficiently large there exist a smooth oriented $m$-submanifold $\Sigma_k\subset \Omega$ and a map $\tilde{u}_k\in W^{1,\tilde{p}_k}(\Omega;\Sp^{n-1})\cap C^\infty(\Omega\setminus \Sigma;\Sp^{n-1})$ for which $\star J\tilde{u}_k=\gamma_n \lcurr \Sigma_k \rcurr$ and the following estimates hold.

\begin{gather}
\| u_k - \tilde{u}_k\|_{L^{q_k}(\Omega)} \leq C_1 (n-p_k)^{\frac{1}{n-p_k}},\label{est:approx1}
\\
\M_{\Omega}(\star J\tilde{u}_k)=\gamma_n \mathcal{H}^{m}(\Sigma_k)\leq \frac{C_2}{(n-\tilde{p}_k)^{1+3/\delta}}.\label{est:approx2}
\end{gather}

Moreover, there exists a real number $L_\delta>1$ such that $L_\delta\to 1$ as $\delta\to 0^+$ and for every pair of open sets $\Omega_1\subseteq \Omega$ and $\Omega_2\subseteq U$ with $\Omega_1\Subset\Omega_2$, it holds that
\begin{equation}\label{est:approx3}
\limsup_{k\to +\infty} \biggl[ (n-\tilde{p}_k) \int_{\Omega_1} |\nabla \tilde{u}_k|^{\tilde{p}_k} \,dx - L_{\delta} (n-p_k)\int_{\Omega_2} |\nabla u_k|^{p_k}\,dx\biggr]\leq 0.
\end{equation}
\end{prop}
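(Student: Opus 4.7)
The plan is to adapt the classical projection--averaging technique (cf.\ \cite[Chapter~10]{Bre-Mi}), with quantitative parameter choices tailored to the critical scaling in $n-p_k$. First, mollify: set $v_k := u_k * \rho_{\ep_k}$ for a small scale $\ep_k$; then $v_k \in C^\infty(\Omega;\overline{B^n_1})$, $\|\nabla v_k\|_\infty \le C\ep_k^{-1}$, and Lemma~\ref{lemma:sobolev-convolution} together with $\|u_k-v_k\|_{L^\infty}\le 2$ and interpolation give
\[
\|u_k-v_k\|_{L^{q_k}} \,\le\, C\ep_k^{p_k/q_k}(n-p_k)^{-1/q_k}.
\]
Second, project: for $a \in B^n_{r_k}$ (a small ball around the origin), set $\pi_a(y):=(y-a)/|y-a|$; by Sard's theorem, for almost every $a$ the set $v_k^{-1}(a)\cap\Omega$ is a smooth $m$-submanifold, and with its natural orientation as a regular preimage the map $\pi_a\circ v_k$ has degree $+1$ on linking $(n-1)$-spheres, so Proposition~\ref{prop:Jacobian_integral} yields $\star J(\pi_a(v_k)) = \gamma_n\lcurr v_k^{-1}(a)\rcurr$. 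Third, choose $a_k\in B^n_{r_k}$ as described below and set $\tilde u_k:=\pi_{a_k}(v_k)$, $\Sigma_k:=v_k^{-1}(a_k)$.

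The choice of $a_k$ is made by Fubini, coarea, and Markov. For the mass, the coarea formula
\[
\int_{B_{r_k}^n} \mathcal{H}^m(v_k^{-1}(a))\,da \,=\, \int_{\{|v_k|\le r_k\}} |\mathrm{Jac}_n\,Dv_k|\,dx
\]
is controlled via the inclusion $\{|v_k|\le r_k\}\subset\{|u_k-v_k|\ge 1-r_k\}$ (which has small measure), together with $|\mathrm{Jac}_n\,Dv_k|\le|\nabla v_k|^n$ and $\|\nabla v_k\|_\infty\le C\ep_k^{-1}$. For the energy, the pointwise inequality $|\nabla\pi_a(v_k)|\le|\nabla v_k|/|v_k-a|$, combined with the elementary bound $\int_{B_{r_k}^n}|y-a|^{-\tilde p_k}\,da\le Cr_k^{n-\tilde p_k}/(n-\tilde p_k)$ valid since $\tilde p_k<n$, yields
\[
\int_{B_{r_k}^n}\int_\Omega|\nabla\pi_a(v_k)|^{\tilde p_k}\,dx\,da \,\le\, \frac{Cr_k^{n-\tilde p_k}}{n-\tilde p_k}\int_\Omega|\nabla v_k|^{\tilde p_k}\,dx.
\]
Applying Markov's inequality to both averaged quantities, combined with Sard to exclude critical values, produces $a_k\in B_{r_k}^n$ satisfying both bounds up to a factor $2$. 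The sharp constant $L_\delta \to 1$ in \eqref{est:approx3} comes from the H\"older estimate $\int|\nabla u_k|^{\tilde p_k}\le|U|^{\delta(n-p_k)/p_k}(\int|\nabla u_k|^{p_k})^{\tilde p_k/p_k}$, whose prefactor tends to $1$ as $p_k\to n$: a short computation gives
\[
(n-\tilde p_k)\int|\nabla v_k|^{\tilde p_k}\le (1+\delta)(|U|/I)^{\delta(n-p_k)/p_k}(n-p_k)\int|\nabla u_k|^{p_k}
\]
with $I=\int|\nabla u_k|^{p_k}$, yielding $L_\delta=(1+\delta)(1+o(1))$.

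The main obstacle is the simultaneous balancing of $\ep_k$, $r_k$, and $|a_k|$: estimate \eqref{est:approx1} forces $\ep_k$ super-polynomially small and $r_k\ge|a_k|$ small (since the elementary decomposition $\|u_k-\tilde u_k\|_{L^{q_k}} \le \|u_k-v_k/|v_k|\|_{L^{q_k}} + \|v_k/|v_k|-\pi_{a_k}(v_k)\|_{L^{q_k}}$ produces a term linear in $|a_k|$ from the identity $|v_k/|v_k|-\pi_{a_k}(v_k)|\le 2|a_k|/|v_k-a_k|$ on the bulk set $\{|v_k|\gtrsim 1\}$), while the averaging losses $r_k^{-n}$ in the mass step and $r_k^{-\tilde p_k}$ in the energy step both demand $r_k$ not too small. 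The generous prefactor $(n-\tilde p_k)^{-1-3/\delta}$ in \eqref{est:approx2} — which blows up as $\delta\to 0$ — is precisely what provides the slack to reconcile these competing demands: one chooses $r_k$ polynomially small in $n-p_k$ with exponent depending on $\delta$, and $\ep_k$ super-polynomially small, so that all three estimates close simultaneously.
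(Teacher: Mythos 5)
Your skeleton (mollify at a super-polynomially small scale, project onto the sphere from a generic point $a$, choose $a$ by averaging/Markov) is the right one, but two of the three estimates fail as you have set things up, and the failures are structural rather than cosmetic.

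First, the energy estimate \eqref{est:approx3}. Your only tool for passing from $\nabla v_k$ to $\nabla \pi_{a_k}(v_k)$ is the global singular bound $|\nabla\pi_a(v_k)|\le |\nabla v_k|/|v_k-a|$ averaged over $a\in B^n_{r_k}$. After normalizing by $|B^n_{r_k}|\sim r_k^n$ and applying Markov, the best you can extract is
$\int_\Omega|\nabla\tilde u_k|^{\tilde p_k}\le C\,r_k^{-\tilde p_k}(n-\tilde p_k)^{-1}\int_\Omega|\nabla v_k|^{\tilde p_k}$,
so that $(n-\tilde p_k)\int_{\Omega_1}|\nabla\tilde u_k|^{\tilde p_k}$ exceeds $(n-p_k)\int_{\Omega_2}|\nabla u_k|^{p_k}$ by an unbounded factor of order $r_k^{-n}/(n-p_k)$; the constant $L_\delta\to 1$ is unreachable this way. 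The point is that the singular bound is only needed where $v_k$ is close to $a$, and that set has super-exponentially small measure (of order $\ep_k^{p_k}$). You must split the projection into a bulk part with Lipschitz constant $1+O(\delta)$ and a remainder supported on $\{|v_k|<1-\delta\}$, reserving the averaged singular estimate for the remainder alone; this is exactly the role of the decomposition $\pi_a(u_{k,\ep})=\Psi_{\delta,a}(u_{k,\ep})+g_{k,\ep,a}$ in the paper's proof, where $\|\nabla\Psi_{\delta,a}\|_\infty\le(1+C\delta)/(1-4\delta)$ gives the main term with the sharp constant and the $g$-term is shown to vanish in the limit.

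Second, your balancing of $r_k$ does not close. With $\pi_a(y)=(y-a)/|y-a|$ the sphere is not (almost) fixed, and as you note \eqref{est:approx1} acquires a term linear in $|a_k|$; since the right-hand side of \eqref{est:approx1} is $C_1(n-p_k)^{1/(n-p_k)}$, this forces $r_k\lesssim (n-p_k)^{1/(n-p_k)}$, i.e.\ exponentially small, not polynomially small as you assert. But then the averaging loss $r_k^{-n}$ in the mass step is exponentially large and cannot be absorbed into the polynomial slack $(n-\tilde p_k)^{-1-3/\delta}$ of \eqref{est:approx2}. The resolution is to replace $(y-a)/|y-a|$ by the genuine radial projection from the point $a$ onto $\Sp^{n-1}$, namely $\pi_a(x)=a+t_a(x)(x-a)/|x-a|$ with $t_a(x)$ chosen so that $\pi_a(x)\in\Sp^{n-1}$. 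This map satisfies $|\pi_a(x)-y|\le C|x-y|$ for every $y\in\Sp^{n-1}$, so $|\pi_a(u_{k,\ep})-u_k|\le C|u_{k,\ep}-u_k|$ pointwise and \eqref{est:approx1} holds for \emph{every} $a$ in the \emph{fixed} ball $B^n_\delta$; the averaging then only costs the harmless constant $\delta^{-n}$. With these two corrections your argument becomes essentially the paper's proof.
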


\begin{proof}
For every $a\in B^n _\delta$, let us consider the map $\pi_a: B^n _1 \setminus \{a\} \to \Sp^{n-1}$, that is the radial projection from $a$ onto $\Sp^{n-1}$, namely
\begin{equation}\label{defn:pi_a}
\pi_a(x):=a+t_a(x)\frac{x-a}{\abs{x-a}},
\end{equation}
where
$$t_a(x):=-\frac{\langle a,x-a\rangle}{|x-a|} + \sqrt{\biggl(\frac{\langle a,x-a\rangle}{|x-a|}\biggr)^2+1-|a|^2}$$
is the unique positive number such that the right-hand side of \eqref{defn:pi_a} belongs to $\Sp^{n-1}$.

We observe that, since $|a|<\delta<1/8$, there exist a constant $C>0$, independent of $a$ and $\delta$, such that
\begin{equation}\label{est:t_a}
1-|a|\leq t_a(x)\leq 1+|a|, \qquad\text{and}\qquad
|\nabla t_a(x)| \leq \frac{C|a|}{|x-a|}, \qquad \forall x \in B^n _1\setminus \{a\} , 
\end{equation}
and therefore
\begin{equation}\label{est:pi_a}
|\nabla \pi_a(x)|\leq \frac{C}{|x-a|} \quad \text{and}\quad |\pi_a(x)-y|\leq C|x-y| \qquad \forall x \in B^n_1\setminus\{a\} , \s \forall y\in \Sp^{n-1} . 
\end{equation}

Let us set $u_{k,\ep}:=u_k * \rho_\ep$. We claim that, if we set
$$\ep_k:=(n-p_k)^{\frac{3}{\delta(n-p_k)}},$$
then it is possible to choose $a_k\in B^n _\delta$ in such a way that the maps $\tilde{u}_k(x):=\pi_{a_k} (u_{k,\ep_k}(x))$ satisfy all the requirements.

In what follows, we always take $k$ sufficiently large such that $\ep_k <{\rm dist}(\Omega, \R^{n+m}\setminus U) $, so that $u_{k, \ep_k}:\Omega \to \R^n$ and $\tilde{u}_k:\Omega\to \Sp^{n-1}$ are well-defined in $\Omega$. 

We start by observing that if $a_k$ is a regular value of the smooth map $u_{k,\ep_k}$, then the map $\tilde{u}_k$ turns out to be smooth outside $\Sigma_k:= u_{k,\ep_k}^{-1}(a_k)$, which is a smooth orientable $m$-submanifold of $\Omega$. Moreover, since in this case $u_{k,\ep_k}$ looks like a projection onto $(T_x \Sigma_k)^\perp$ locally around every $x\in \Sigma_k$ (namely, it coincide with this projection up to a diffeomorphism), we deduce that $\abs{\deg (\tilde{u}_k,S)}=1$ for every embedded $(n-1)$-sphere linked with a single connected component of $\Sigma_k$ (for example this holds when $S$ is a connected component of $u_{k,\ep_k}^{-1}(\partial B^n _{r}(a_k))$ for some small radius $r>0$). Thus, from Proposition~\ref{prop:Jacobian_integral} we deduce that $\star J \tilde{u}_k = \gamma_n \lcurr \Sigma_k \rcurr$, for a suitable choice of the orientation on each connected component of $\Sigma_k$.

Now we need to show that it is possible to choose these regular values $a_k$ so that also \eqref{est:approx1}, \eqref{est:approx2} and \eqref{est:approx3} hold.

First of all, we prove that \eqref{est:approx1} is true independently of the choice of $a_k$. To this end, let us consider the numbers $r_1<n<r_2$ defined by
$$\frac{1}{r_1}:=\frac{1}{n}+\frac{1}{3(n+m)},
\qquad\text{and}\qquad
\frac{1}{r_2}:=\frac{1}{n}-\frac{1}{3(n+m)},$$
so that the Sobolev embedding \cite[Theorem~8.24]{Leoni-book} yields
$$[u]_{W^{1/3,r_2}(U)}\leq C \|\nabla u\|_{L^{r_1}(U)} \qquad \forall u\in W^{1,r_1}(U;\R^n),$$
for some constant $C>0$ depending only on $U,n,m$. Moreover, Lemma~\ref{lemma:sobolev-convolution} yields
$$\|u_k-u_{k,\ep}\|_{L^{r_2}(\Omega)}\leq \|\rho\|_{L^\infty} \cdot \ep^{1/3}[u_k]_{W^{1/3,r_2}(U)}.$$

Therefore, if $k$ is sufficiently large to ensure that $r_1<p_k<n<q_k<r_2$, from the last two estimates and Jensen's inequality we deduce that
$$\|u_k-u_{k,\ep_k}\|_{L^{q_k}(\Omega)}\leq C {\ep_k}^{1/3}\|\nabla u_k\|_{L^{p_k}(U)},$$
where $C>0$ is a constant depending only on $\Omega$, $U$, and the space dimensions $n$ and $m$.

As a consequence, the second inequality in \eqref{est:pi_a} and the definition of $\ep_k$ imply that
$$\|u_k-\pi_a(u_{k,\ep_k})\|_{L^{q_k}(\Omega)} \leq C (n-p_k)^{\frac{1}{\delta(n-p_k)}}\|\nabla u_k\|_{L^{p_k}(U)}\leq C (n-p_k)^{\frac{1}{\delta(n-p_k)}-\frac{1}{p_k}}E^{\frac{1}{p_k}},$$
for every $a\in B_\delta ^n$, where $C$ is a constant depending only on $\Omega,U,n,m$. In particular, \eqref{est:approx1} holds true for sufficiently large $k$, independently of the choice of $a_k$.

Now we focus on the estimates \eqref{est:approx2} and \eqref{est:approx3}, and we show that both are true for a set of $a_k$ with sufficiently large measure, so that in the end we have a set of positive measure of $a_k$ for which all the statements are true.

Concerning \eqref{est:approx2}, we exploit the coarea formula, which yields (see \cite[Section~7.5]{ABO2-singularities} and \cite[Section~2.7]{2005-Indiana-ABO})
%
$$\int_{B^n _\delta} \mathcal{H}^{m}(u_{k,\ep}^{-1}(a))\,da\leq \int_{B^n _1} \mathcal{H}^{m}(u_{k,\ep}^{-1}(a))\,da =\M(\star Ju_{k,\ep})\leq \int_{\Omega} |\nabla u_{k,\ep}(x)|^n\,dx.$$

Moreover, from basic properties of convolution and the fact that $\|u_k\|_{L^\infty(U)}=1$, we obtain that
\begin{multline*}
\int_{\Omega} |\nabla u_{k,\ep}(x)|^n\,dx =\int_{\Omega} |u_k* \nabla \rho_\ep (x)|^{n-p_k}\cdot |\nabla u_{k} * \rho_\ep (x)|^{p_k}\,dx \\
\leq \|\nabla \rho_\ep\|_{L^1} ^{n-p_k} \int_{U} |\nabla u_{k,\ep}(x)|^{p_k}\,dx\leq \frac{\|\nabla \rho\|_{L^1} ^{n-p_k}}{\ep^{n-p_k}} \int_{U} |\nabla u_{k}(x)|^{p_k}\,dx,
\end{multline*}
and hence
\begin{equation}\label{est:a-1}
\int_{B^n _\delta} \mathcal{H}^{m}(u_{k,\ep_k}^{-1}(a))\,da\leq \frac{\|\nabla \rho\|_{L^1} ^{n-p_k}}{\ep_k ^{n-p_k}} \cdot \frac{E}{n-p_k} = \frac{\|\nabla \rho\|_{L^1} ^{n-p_k} \cdot E}{(n-p_k)^{1+3/\delta}}=\frac{\|\nabla \rho\|_{L^1} ^{n-p_k} \cdot E}{[(1-\delta)(n-\tilde{p}_k)]^{1+3/\delta}},
\end{equation}
which is an averaged version (with respect to $a$) of \eqref{est:approx2}.

Finally, we need to estimate the $W^{1,\tilde{p}_k}$ energy of $\tilde{u}_k$, which is the most delicate step, and is the reason for which we need to reduce the integrability exponent from $p_k$ to $\tilde{p}_k$.

To this end, we fix a non-decreasing smooth function $\psi_\delta:[0,+\infty)\to [0,1]$ such that
\begin{equation}\label{defn:psi_delta}
\psi_\delta(t):=\begin{cases}
0 &\text{if } t\in [0,\delta],\\
1 &\text{if } t\in [1-2\delta,+\infty),
\end{cases}
\qquad\text{and}\qquad
\psi_\delta '(t)\leq \frac{1}{1-4\delta} \qquad \forall t\in [0,+\infty).
\end{equation}
Then we consider the maps $\Psi_{\delta,a}:B_1 ^n\to B_1 ^n$ defined by
$$\Psi_{\delta,a}(x):=a+\psi_\delta(|x-a|)t_a(x)\frac{x-a}{\abs{x-a}}=a+\psi_\delta(|x-a|)(\pi_a(x)-a),$$
and we observe that
\begin{align*}
\nabla \Psi_{\delta,a}(x) &= \psi_\delta '(|x-a|) t_a(x)\frac{x-a}{|x-a|}\otimes \frac{x-a}{|x-a|} + \frac{\psi_\delta(|x-a|)}{|x-a|}t_a(x) \biggl(\textbf{I}_{n\times n }-\frac{x-a}{|x-a|}\otimes \frac{x-a}{|x-a|}\biggr)\\
&\quad \mbox{}+\psi_\delta(|x-a|)\nabla t_a(x) \otimes \frac{x-a}{\abs{x-a}},
\end{align*}
for every $x\in B_1 ^n$ and every $a\in B_\delta ^n$. Therefore, from \eqref{est:t_a} and \eqref{defn:psi_delta} we deduce that
\begin{align*}
\sup_{|\xi|=1} |\nabla \Psi_{\delta,a}(x) \xi|&\leq \max\biggl\{ \psi_\delta '(|x-a|),\frac{\psi_\delta(|x-a|)}{|x-a|}\biggr\} |t_a(x)| + \psi_\delta(|x-a|)|\nabla t_a(x)|\\
&\leq \frac{1+|a|}{1-4\delta} + \frac{C|a|}{1-4\delta}\leq \frac{1+C\delta}{1-4\delta} ,
\end{align*}
for every $x \in B^n _1$. Now let us set
$$g_{k,\ep,a}(x):=\bigl(1-\psi_\delta(|u_{k,\ep}(x)-a|)\bigr) \bigl(\pi_{a}(u_{k,\ep}(x))-a\bigr),$$
so that we have
\begin{equation}\label{eq:pi_a-split}
\pi_a(u_{k,\ep})=\Psi_{\delta,a}(u_{k,\ep})+g_{k,\ep,a},
\end{equation}
and we can estimate separately the two addenda.

For the first one, we exploit the previous estimate and Hölder inequality, so we obtain that
\begin{align*}
\int_{\Omega_1} |\nabla (\Psi_{\delta,a}(u_{k,\ep}))|^{\tilde{p}_k} &= \int_{\Omega_1} |\nabla \Psi_{\delta,a}(u_{k,\ep}) \nabla u_{k,\ep}|^{\tilde{p}_k} \leq \biggl(\frac{1+C\delta}{1-4\delta}\biggr)^{\tilde{p}_k} \int_{\Omega_1} |\nabla u_{k,\ep}|^{\tilde{p}_k}\\
&\leq |\Omega_1|^{1-\frac{\tilde{p}_k}{p_k}}\biggl(\frac{1+C\delta}{1-4\delta}\biggr)^{\tilde{p}_k} \biggl(\int_{\Omega_1} |\nabla u_{k,\ep}|^{p_k}\biggr)^{\frac{\tilde{p}_k}{p_k}}\\
&\leq |\Omega|^{1-\frac{\tilde{p}_k}{p_k}} \biggl(\frac{1+C\delta}{1-4\delta}\biggr)^{n} \biggl(\int_{\Omega_2} |\nabla u_{k}|^{p_k}\biggr)^{\frac{\tilde{p}_k}{p_k}},
\end{align*}
provided $\ep<\dist(\Omega_1,\partial \Omega_2)$. Thus, multiplying both sides by $(n-\tilde{p}_k)$, we find that
\begin{multline*}
(n-\tilde{p}_k)\int_{\Omega_1} |\nabla (\Psi_{\delta,a}(u_{k,\ep}))|^{\tilde{p}_k}\leq |\Omega|^{1-\frac{\tilde{p}_k}{p_k}} \biggl(\frac{1+C\delta}{1-4\delta}\biggr)^{n} \cdot \frac{n-p_k}{1-\delta} \cdot \biggl(\int_{\Omega_2} |\nabla u_{k}|^{p_k}\biggr)^{\frac{\tilde{p}_k}{p_k}}\\
\leq \bigl((n-p_k)|\Omega| \bigr)^{1-\frac{\tilde{p}_k}{p_k}} \biggl(\frac{1+C\delta}{1-4\delta}\biggr)^{n} \cdot \frac{1}{1-\delta} \cdot \biggl((n-p_k)\int_{\Omega_2} |\nabla u_{k}|^{p_k}\biggr)^{\frac{\tilde{p}_k}{p_k}}.
\end{multline*}

Now we set
$$L_\delta :=\biggl(\frac{1+C\delta}{1-4\delta}\biggr)^{n} \cdot \frac{1}{1-\delta},$$
so in particular $L_\delta \to 1^+$ as $\delta\to 0^+$, and the previous estimate implies that
\begin{equation}\label{est:Psi(u)}
\limsup_{k\to +\infty}\, (n-\tilde{p}_k)\int_{\Omega_1} |\nabla (\Psi_{\delta,a}(u_{k,\ep}))|^{\tilde{p}_k} - L_\delta (n-p_k)\int_{\Omega_2} |\nabla u_{k}|^{p_k} \leq 0,
\end{equation}
uniformly with respect to $a \in B^n _\delta$ and $\ep \in (0,\dist(\Omega_1,\partial \Omega_2))$.

As for $g_{k,\ep,a}$, recalling \eqref{est:pi_a} and \eqref{defn:psi_delta}, we obtain that
\begin{align*}
|\nabla g_{k,\ep,a}(x)| &\leq (1+|a|) |\psi_\delta '(|u_{k,\ep}(x)-a|)| \cdot |\nabla u_{k,\ep}(x)| + |\nabla \pi_a(u_{k,\ep}(x))|\cdot |\nabla u_{k,\ep}(x)|\\
&\leq \biggl( \frac{1+\delta}{1-4\delta} + \frac{C}{|u_{k,\ep}(x)-a|}\biggr)|\nabla u_{k,\ep}(x)| \leq \frac{C}{|u_{k,\ep}(x)-a|}|\nabla u_{k,\ep}(x)|,
\end{align*}
for some universal constant $C>0$.

Moreover, we observe that $\psi_\delta(|u_{k,\ep}-a|)=1$ where $|u_{k,\ep}|\geq 1-\delta$, because in this case we have that $|u_{k,\ep}-a|\geq|u_{k,\ep}|-|a|\geq 1-2\delta$. As a consequence, we deduce that $g_{k,\ep,a}$ is supported where $|u_{k,\ep}|<1-\delta$, so we have that
$$\int_{\Omega} |\nabla g_{k,\ep,a}(x)|^{\tilde{p}_k}\,dx\leq C \int_{\{|u_{k,\ep}|<1-\delta\}}\frac{|\nabla u_{k,\ep}(x)|^{\tilde{p}_k}}{|u_{k,\ep}(x)-a|^{\tilde{p}_k}}\,dx.$$

Now we observe that, for every $z\in B^n _1$, it holds that 
$$\int_{B^n _\delta} \frac{1}{|z-a|^{\tilde{p}_k}}\,da \leq \int_{B^n _\delta} \frac{1}{|a|^{\tilde{p}_k}}\,da = \omega_{n-1} \frac{\delta^{n-\tilde{p}_k}}{n-\tilde{p}_k},$$
and hence
$$\int_{B^n _\delta} da \int_{\Omega} |\nabla g_{k,\ep,a}(x)|^{\tilde{p}_k}\,dx \leq C \omega_{n-1} \frac{\delta^{n-\tilde{p}_k}}{n-\tilde{p}_k}  \int_{\{|u_{k,\ep}|<1-\delta\}} |\nabla u_{k,\ep}(x)|^{\tilde{p}_k}\,dx.$$

Moreover, from Lemma~\ref{lemma:sobolev-convolution} we deduce that
$$|\{|u_{k,\ep}|<1-\delta\}|\leq \frac{1}{\delta^{p_k}}\int_{\Omega} |u_{k}(x)-u_{k,\ep}(x)|^{p_k}\,dx \leq \frac{\ep^{p_k}}{\delta^{p_k}} \int_{U}|\nabla u_k(x)|^{p_k}\,dx,$$
so Hölder inequality yields
\begin{align*}
&\int_{B^n _\delta} da \int_{\Omega} |\nabla g_{k,\ep,a}(x)|^{\tilde{p}_k}\,dx \\
&\hspace{7em}\leq C \omega_{n-1}\cdot \frac{\delta^{n-\tilde{p}_k}}{n-\tilde{p}_k} \cdot |\{|u_{k,\ep}|<1-\delta\}|^{1-\frac{\tilde{p}_k}{p_k}} \biggl(\int_{\{|u_{k,\ep}|<1-\delta\}} |\nabla u_{k,\ep}|^{p_k}\,dx\biggr)^{\frac{\tilde{p}_k}{p_k}}\\
&\hspace{7em}\leq C \omega_{n-1}\cdot \frac{\delta^{n-\tilde{p}_k}}{n-\tilde{p}_k} \cdot \frac{\ep^{p_k-\tilde{p}_k}}{\delta^{p_k-\tilde{p}_k}} \biggl(\int_{U}|\nabla u_k|^{p_k}\,dx\biggr)^{\frac{\tilde{p}_k}{p_k}}.
\end{align*}

In particular, when $\ep=\ep_k$, we obtain that
$$\int_{B^n _\delta} da \int_{\Omega} |\nabla g_{k,\ep_k,a}(x)|^{\tilde{p}_k}\,dx \leq C \omega_{n-1} \frac{\delta^{n-p_k}}{1+\delta} (n-p_k)^2  \biggl(\int_{U}|\nabla u_k|^{p_k}\,dx\biggr)^{\frac{\tilde{p}_k}{p_k}}.$$

From the last estimate and \eqref{est:a-1}, we deduce that for every $k\in\N$ there exists $a_k\in B_\delta ^n$ (and actually a set of positive measure of such points) that is a regular value for $u_{k,\ep_k}$ and such that
$$\mathcal{H}^{m}(\Sigma_k)=\mathcal{H}^{m}(u_{k,\ep_k}^{-1}(a_k)) \leq \frac{3}{|B^n _1|\delta^n} \cdot \frac{\|\nabla \rho\|_{L^1} ^{n-p_k}\cdot E}{[(1-\delta)(n-p_k)]^{1+3/\delta}},$$
that is \eqref{est:approx2}, and
\begin{align*}
\int_{\Omega} |\nabla g_{k,\ep_k,a_k}(x)|^{\tilde{p}_k}\,dx &\leq  \frac{3}{|B^n _1|\delta^n} \cdot C \omega_{n-1} \frac{\delta^{n-p_k}}{1+\delta} (n-p_k)^2 \biggl(\int_{U}|\nabla u_k|^{p_k}\,dx \biggr)^{\frac{\tilde{p}_k}{p_k}}\\
&\leq  \frac{C}{\delta^{p_k}(1+\delta)} (n-p_k) E^{\tilde{p}_k/p_k},
\end{align*}
for some constant $C>0$, independent of $k$. In particular, it holds that
$$\lim_{k\to +\infty} \int_{\Omega} |\nabla g_{k,\ep_k,a_k}(x)|^{\tilde{p}_k}\,dx=0,$$
and this, together with \eqref{eq:pi_a-split} and \eqref{est:Psi(u)}, yields \eqref{est:approx3}.
\end{proof}

\begin{rem}\label{rem:flat_approx}
By combining \eqref{est:approx1}, \eqref{est:approx3} and \eqref{th:BN_invent}, applied with $p=\tilde{p}_k$ and $q=q_k$, we also deduce that
\begin{equation}\label{approx_flat}
\lim_{k\to +\infty} \F_{\Omega}(\star Ju_k- \star J\tilde{u}_k)=0.
\end{equation}
\end{rem}

\section{The zero-dimensional case \texorpdfstring{$(m=0)$}{m=0} } \label{sec: zero dim case}

In this section we consider the case in which $m=0$, so we deal with maps defined on a domain of $\R^n$ with values into $\Sp^{n-1}$, for which the Jacobian is a $0$-dimensional current.

In this setting we prove Theorem~\ref{teo:X_M+X_F}, which is a key result for this paper, and provides in some sense a $W^{1,p}$ counterpart of the estimates in \cite{Sandier-JFA-1998,1999-SIAM-Jerrard}, playing the role of \cite[Lemma~3.10]{2005-Indiana-ABO} and \cite[Proposition~3.2]{Jerrard-Soner-calcvar} in the Ginzburg-Landau framework.

Theorem~\ref{teo:X_M+X_F}, together with the approximation result of Proposition~\ref{prop:approx}, immediately provides a proof of statement~$(i)$ in Theorem~\ref{main_res} in the case $m=0$, but is also the main ingredient for the proof in the case $m>0$, which then follows just by pursuing the strategy of \cite{2005-Indiana-ABO}, with minor adjustments.

Before stating the result, we state two preliminary lemmas which are needed for the proof. The first one is a simple estimate, similar to  \cite[Lemma~1.1]{HardtLinChangyou}, which relates the $p$-energy of a map on the boundary of a set to the absolute value of the Jacobian computed on that set.

\begin{lemma}\label{lemma:W1p_boundary}
Let $\Omega \subset \R^n$ be an open set, let $p\in [n-1,n)$, and let $u\in W^{1,p}(\Omega;\Sp^{n-1})$ be a map that is smooth outside a finite set. Let also $A\Subset \Omega$ be an open set whose boundary $\Gamma:=\partial A\subset \Omega$ is Lipschitz and does not intersect the singular set of $u$. Then it holds that
\begin{equation}\label{est:boundary-energy-Jac}
\int_{\Gamma} |\nabla u(x)|^p \,d\mathcal{H}^{n-1}(x) \geq n^{\frac{p}{n-1}} (n-1)^{\frac{p}{2}}\frac{\abs{\star Ju(A)}^{\frac{p}{n-1}}}{\mathcal{H}^{n-1}(\Gamma)^{\frac{p}{n-1}-1}}. 
\end{equation}
\end{lemma}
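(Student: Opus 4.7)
The proof proceeds in four steps: identify $\star Ju(A)$ with a topological degree via Proposition~\ref{prop:Jacobian_integral}, bound that degree by the tangential Jacobian of $u|_\Gamma$, estimate the Jacobian pointwise by the tangential gradient using AM--GM on singular values, and finally apply H\"older's inequality on $\Gamma$.

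\emph{Step 1: reduction to a degree.} Since $u$ is smooth outside a finite set of points $\{x_i\}\subset \Omega$ with integer degrees $d_i$, and $\Gamma=\partial A$ avoids the singular set, applying Proposition~\ref{prop:Jacobian_integral} locally around each $x_i\in A$ (the $m=0$ case, where isolated points play the role of $\Sigma$) yields $\star Ju\mres A=\gamma_n\sum_{x_i\in A} d_i\,\delta_{x_i}$. Setting $d:=\sum_{x_i\in A} d_i=\deg(u|_\Gamma)\in\Z$, one gets $|\star Ju(A)|=\gamma_n|d|=\frac{\omega_{n-1}}{n}|d|$.

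\emph{Steps 2--3: from degree to tangential gradient.} Let $\omega_{\Sp^{n-1}}$ denote the unit volume form of $\Sp^{n-1}$. The standard degree formula (applied to the smooth restriction $u|_\Gamma\colon\Gamma\to\Sp^{n-1}$) gives
\begin{equation*}
|d|\,\omega_{n-1}=\biggl|\int_{\Gamma}(u|_\Gamma)^*\omega_{\Sp^{n-1}}\biggr|\leq \int_{\Gamma}|\mathrm{Jac}(u|_\Gamma)(x)|\,d\mathcal{H}^{n-1}(x).
\end{equation*}
At $\mathcal{H}^{n-1}$-a.e.\ $x\in\Gamma$, the tangential differential $d(u|_\Gamma)_x$ is a linear map between the two $(n-1)$-dimensional inner product spaces $T_x\Gamma$ and $T_{u(x)}\Sp^{n-1}$; denoting its singular values by $\sigma_1,\dots,\sigma_{n-1}$ and applying AM--GM,
\begin{equation*}
|\mathrm{Jac}(u|_\Gamma)(x)|=\prod_{i=1}^{n-1}\sigma_i\leq\biggl(\frac{1}{n-1}\sum_{i=1}^{n-1}\sigma_i^2\biggr)^{\!(n-1)/2}=\frac{|\nabla^\Gamma u(x)|^{n-1}}{(n-1)^{(n-1)/2}}\leq \frac{|\nabla u(x)|^{n-1}}{(n-1)^{(n-1)/2}}.
\end{equation*}

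\emph{Step 4: H\"older and rearrangement.} Combining the two previous displays with H\"older's inequality on $\Gamma$ applied with conjugate exponents $p/(n-1)$ and $p/(p-n+1)$ gives
\begin{equation*}
|d|\,\omega_{n-1}(n-1)^{(n-1)/2}\leq \int_\Gamma |\nabla u|^{n-1}\,d\mathcal{H}^{n-1}\leq \biggl(\int_\Gamma |\nabla u|^{p}\,d\mathcal{H}^{n-1}\biggr)^{\!(n-1)/p}\mathcal{H}^{n-1}(\Gamma)^{(p-n+1)/p}.
\end{equation*}
Raising both sides to the power $p/(n-1)$ and substituting $|d|\,\omega_{n-1}=n\,|\star Ju(A)|$ from Step~1 yields exactly \eqref{est:boundary-energy-Jac}.

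The only delicate point is Step~1, where one must carefully identify $\star Ju$ on $A$ with an integer combination of deltas carrying the local degrees; this is a direct application of Proposition~\ref{prop:Jacobian_integral} in the zero-dimensional setting, so no real obstacle arises. The remaining steps are a classical chain: degree formula $\Rightarrow$ Jacobian $\Rightarrow$ AM--GM $\Rightarrow$ H\"older.
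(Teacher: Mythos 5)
Your proof is correct and follows essentially the same route as the paper: the paper writes $\abs{\star Ju(A)}=\abs{\tfrac1n\int_\Gamma ju}$ directly (which is exactly $\gamma_n\abs{\deg(u|_\Gamma)}$, since $ju$ restricted to $\Gamma$ is the pullback of the sphere's volume form), invokes the pointwise bound $|ju|\le (n-1)^{-(n-1)/2}|\nabla u|^{n-1}$ from the literature instead of deriving it by AM--GM on singular values as you do, and then applies the same H\"older step. The only differences are cosmetic — your degree detour and your self-contained proof of the cited pointwise estimate.
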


\begin{proof}
We first observe that, by Proposition~\ref{prop:Jacobian_integral}, $\star Ju$ is a finite measure supported on the finite singular set of $u$, so in particular $\star Ju(A)$ is well-defined, and is equal to the limit of $\langle \star Ju ,\psi\rangle$ as $\psi$ approximates the characteristic function of $A$. Since $ju$ is a smooth $(n-1)$-form around $\Gamma$, from the definition of $\star Ju$ we deduce that
$$\abs{\star Ju(A)}=\left|\frac{1}{n} \int_{\Gamma} ju \right|\leq \frac{1}{n} \int_{\Gamma} |ju| .  $$
Now we exploit \cite[Lemma~4.3]{Bre-Mi}, which states that
$$|ju|\leq \frac{1}{(n-1)^{\frac{n-1}{2}}} |\nabla u| ^{n-1},$$
so we obtain that
$$\abs{\star Ju(A)}\leq \frac{1}{n(n-1)^{\frac{n-1}{2}}}  \int_{\Gamma} |\nabla u(x)| ^{n-1} \,d\mathcal{H}^{n-1}(x).$$

Finally, H\"older inequality yields
$$\abs{\star Ju(A)}\leq \frac{1}{n(n-1)^{\frac{n-1}{2}}} 
 \mathcal{H}^{n-1}(\Gamma)^{1-\frac{n-1}{p}}\left(\int_{\Gamma} |\nabla u(x)| ^{p} \,d\mathcal{H}^{n-1}(x)\right)^{\frac{n-1}{p}}, $$
which is equivalent to \eqref{est:boundary-energy-Jac}.
\end{proof}

The second lemma we need is just an estimate from below for a sum emerging from the computations in the proof of Theorem~\ref{teo:X_M+X_F}. We prefer to state it separately and prove it in the Appendix because, even though it is elementary, it requires a bit of work, which, however, is not directly related to the arguments in the proof of Theorem~\ref{teo:X_M+X_F}.

\begin{lemma}\label{lemma:a_i}
Let $K\in\N$ be a nonnegative integer and let $(a_0,\dots,a_k)\in\N^{K+1}$ be nonnegative integers. Let us set $S_k:=a_0+\dots+a_k$ for every $k\in\{0,\dots,K\}$ and $a_{K+1}:=0$.

Then for every $\beta\in (1,2)$ and every $\lambda \in (3/4,1)$ it turns out that
\begin{equation}\label{th:lemma-a_i}
\sum_{k=0} ^{K} \frac{\max\{S_k-a_{k+1},0\}^{\beta}}{S_k ^{\beta-1}}\lambda^k \geq \frac{2\lambda-1}{2\lambda} \sum_{k=0} ^{K} a_{k} \lambda^{k},
\end{equation}
where the fraction in the left-hand side is intended to vanish if $S_k=0$.
\end{lemma}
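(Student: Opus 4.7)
The plan is to proceed by induction on $K$. This induction is essential: as I discuss at the end, a naive term-by-term summation of the pointwise inequality one eventually needs produces an uncontrollable boundary correction, and only the telescoping provided by the inductive step absorbs it.

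For the base case $K=0$ one has $a_1=0$ and $X_0 := S_0^\beta/S_0^{\beta-1}=a_0$, so the inequality reads $a_0 \geq \tfrac{2\lambda-1}{2\lambda}\,a_0$, which holds since $(2\lambda-1)/(2\lambda)<1$. For the inductive step, assuming the statement for length $K+1$, I consider a sequence $(a_0,\dots,a_{K+1})\in \N^{K+2}$ and compare its full sum with the one for the truncated sequence $(a_0,\dots,a_K)$, in which the role of $a_{K+1}$ is implicitly played by $0$. The summands at indices $k<K$ agree in both sums; only the term at $k=K$ changes (from $S_K$ to $(S_K-a_{K+1})_+^\beta/S_K^{\beta-1}$) and a new final term $X_{K+1}=S_{K+1}=S_K+a_{K+1}$ appears in the full sum. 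Subtracting the inductive hypothesis from the target and using the key identity $\lambda\bigl(1-\tfrac{2\lambda-1}{2\lambda}\bigr)=\tfrac{1}{2}$, the step reduces to the pointwise estimate
\[
\frac{(s-a)_+^\beta}{s^{\beta-1}} + \frac{a}{2} \;\geq\; (1-\lambda)\,s,
\]
where $s=S_K$, $a=a_{K+1}$, and the left-hand side is interpreted as $0$ when $s=0$.

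It remains to prove this one-variable inequality. The case $s=0$ is trivial, and if $a\geq s>0$ the left-hand side is at least $a/2\geq s/2\geq(1-\lambda)s$ because $\lambda>1/2$. The essential case is $0\leq a<s$: normalizing by $s$ and writing $t=a/s\in[0,1)$ reduces it to
\[
g(t) := (1-t)^\beta + \tfrac{t}{2} - (1-\lambda) \;\geq\; 0, \qquad t\in[0,1],\ \beta\in(1,2),\ \lambda\in(3/4,1).
\]
A short calculus check shows that $g$ is convex, with $g'(0)=\tfrac12-\beta<0$ and $g'(1)=\tfrac12>0$, so its minimum is attained at the unique $t^*\in(0,1)$ determined by $(1-t^*)^{\beta-1}=1/(2\beta)$. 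Substituting this back yields $g(t^*) = \lambda - \tfrac{1}{2} - u(\beta-1)/(2\beta)$, where $u := (2\beta)^{-1/(\beta-1)} \in (0,1)$; the crude bounds $u<1$ and $(\beta-1)/(2\beta)<1/4$ (both valid since $\beta<2$) then give $g(t^*) > \lambda - 3/4 > 0$. This is precisely where the hypothesis $\lambda>3/4$ is consumed.

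I expect the hard part to be pinpointing the exact role of the threshold $\lambda=3/4$. A direct summation of the same pointwise inequality would only produce
\[
\sum_{k=0}^K X_k\lambda^k \;\geq\; \tfrac{2\lambda-1}{2\lambda}\sum_{k=0}^K a_k\lambda^k + \tfrac{a_0}{2\lambda} - \lambda^{K+1} S_K,
\]
and the spurious boundary term $\lambda^{K+1}S_K$ is not controlled by $a_0$ in general (for instance, place all the mass at $k=K$). Recognizing that the inductive telescoping absorbs exactly this term through the new summand $X_{K+1}\lambda^{K+1}=S_{K+1}\lambda^{K+1}$, and that the critical regime $\lambda>3/4$ is built into the optimal constant of the pointwise estimate, is the real crux of the argument.
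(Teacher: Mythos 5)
Your proof is correct and follows essentially the same route as the paper's: induction on $K$, reduction of the inductive step to the one-variable inequality $(1-t)^\beta + t/2 \geq 1-\lambda$ on $[0,1]$ (after disposing of the degenerate cases $S_K=0$ and $a_{K+1}\geq S_K$), and evaluation of that function at its unique interior minimum $1-t^*=(2\beta)^{-1/(\beta-1)}$, with the same crude bounds consuming $\lambda>3/4$. The only differences are cosmetic (how the cases are split and how $g(t^*)$ is bounded below).
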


We are now ready to state and prove the main result of this section.

\begin{teo}\label{teo:X_M+X_F}
Let $n\geq 2$ be an integer, and let $\alpha\in (\sqrt{3}/2,1)$ and $p\in (n-1,n)$ be such that
\begin{equation}\label{prop:alpha_p0}
n-p\leq\frac{1}{4} \quad\text{and}\quad \frac{p(n-p)}{n-1} \frac{\alpha^{-\frac{1}{n-p}}}{\alpha^{-\frac{1}{n-p}+2}-1}\leq \frac{(1-\alpha)(2\alpha^2-1)}{4\alpha}. 
\end{equation}

Then there exists a constant $C=C(n,\alpha)>0$, depending only on $n$ and $\alpha$, such that the following statement holds.

For every bounded open set $\Omega\subseteq\R^n$ with Lipschitz boundary, and every map $u\in W^{1,p}(\Omega ; \Sp^{n-1})$ that is smooth outside a finite set, there exist an integral $0$-current $X$ and an integral $1$-current $S$, which is a finite sum of segments, such that $\star Ju/\gamma_n=X+\partial S$ and the following estimates hold
\begin{equation}\label{th:mass}
\M_{\Omega}(X)\leq C (n-p)\int_{\Omega} |\nabla u(x)|^p\,dx,
\end{equation}
and
\begin{equation}\label{th:flat}
\M_{\Omega}(S)\leq C \alpha^{\frac{1}{2(n-p)}} (n-p) \int_{\Omega} |\nabla u(x)|^p\,dx.
\end{equation}

In addition, it also holds that
\begin{multline}\label{th:mass_sharp}
(n-1)^{\frac{p}{2}} \omega_{n-1} \biggl(\alpha \M_{\Omega}(X)-\frac{p(2^{n-p}-1)}{n-1}\biggr)\\
\leq \Biggl(1+C\frac{2^{n-p}-1}{\sqrt{n-p}} \biggl(1+ 2\frac{\log\bigl(\M_\Omega(\star Ju /\gamma_n)\bigr)}{\abs{\log(n-p)}}\biggr) \Biggr) (n-p) \int_{\Omega} |\nabla u(x)|^p\,dx.
\end{multline}
\end{teo}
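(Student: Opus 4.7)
The proof naturally calls for a ball-growth procedure in the spirit of Sandier--Jerrard \cite{Sandier-JFA-1998,1999-SIAM-Jerrard}, adapted to the $p$-energy regime. By Proposition~\ref{prop:Jacobian_integral}, I can write $\star Ju/\gamma_n = \sum_j d_j \delta_{x_j}$ with $d_j \in \Z\setminus\{0\}$ supported on the finite singular set. I would then set up a finite sequence of scales $0 < r_0 < r_1 < \cdots < r_K$, with $K$ of order $1/(n-p)$ and geometric ratios linking $\alpha$ to the doubling factor, and build inductively a disjoint family of balls $\mathcal{B}_k$ at each scale: starting from disjoint balls of radius $r_0$ around the $x_j$, I grow all balls to radius $r_k$ at step $k$ and merge any pair that intersects. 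Each $B \in \mathcal{B}_k$ carries a signed integer degree $d_B^k$ equal to the algebraic sum of the original $d_j$ contained in $B$, and mergers may produce cancellations.

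The energetic ingredient is Lemma~\ref{lemma:W1p_boundary}: for each $B = B(c, r) \in \mathcal{B}_k$ and $s \in (r_{k-1}, r_k)$, applying the lemma to $\partial B(c, s)$ and integrating in $s$ gives
\begin{equation*}
\int_{B(c, r_k)\setminus B(c, r_{k-1})} |\nabla u|^p \, dx \;\geq\; \frac{(n-1)^{p/2}\omega_{n-1}}{n-p}\,|d_B^k|^{\frac{p}{n-1}} \bigl(r_k^{n-p} - r_{k-1}^{n-p}\bigr).
\end{equation*}
Summing over $B \in \mathcal{B}_k$ and over $k$, using that the annular regions are pairwise disjoint by the ball construction, yields the fundamental lower bound
\begin{equation*}
(n-p)\int_\Omega|\nabla u|^p \, dx \;\geq\; (n-1)^{p/2}\omega_{n-1} \sum_{k=1}^K \bigl(r_k^{n-p} - r_{k-1}^{n-p}\bigr)\sum_{B\in\mathcal{B}_k} |d_B^k|^{\frac{p}{n-1}}.
\end{equation*}

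I would then define $X := \sum_{B\in\mathcal{B}_K} d_B^K \,\delta_{c_B}$, where $c_B$ is the center of $B$, so that $\M_\Omega(X) = \sum_B |d_B^K|$. The current $S$ is assembled scale by scale: whenever several balls in $\mathcal{B}_{k-1}$ merge into a single $B \in \mathcal{B}_k$, I insert a finite sum of segments connecting the old centers to the new one, with signs and multiplicities chosen so that the boundary of the inserted $1$-chain realizes the difference between the pre- and post-merger $0$-currents. Telescoping across all $K$ scales gives the identity $\star Ju/\gamma_n = X + \partial S$. Since each segment inserted at scale $k$ has length $O(r_k)$, the mass $\M_\Omega(S)$ is bounded by $\sum_k r_k \cdot (\text{cancellation at scale } k)$.

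The mass bounds \eqref{th:mass} and \eqref{th:flat} will then follow by combining the fundamental energy inequality with the combinatorial Lemma~\ref{lemma:a_i}, applied along each branch of the merger tree: $a_k$ records the cancellation at scale $k$ along the branch, $S_k$ the cumulative cancellation, $\beta = p/(n-1) \in (1,2)$, and $\lambda$ is tuned so that $\lambda^k$ matches the $r_k^{n-p}$ weighting (hence $\lambda$ close to $1$ and above $3/4$ by the assumptions $\alpha > \sqrt{3}/2$ and $n-p \leq 1/4$). The factor $\alpha^{1/(2(n-p))}$ in \eqref{th:flat} emerges from the geometric decay of $r_k$ combined with a $\sqrt{\lambda}$ produced by Cauchy--Schwarz when reducing to Lemma~\ref{lemma:a_i}. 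For \eqref{th:mass_sharp}, one isolates the outermost annular contribution, which yields the coefficient $\alpha\,\M(X)$ together with the sharp constant $(n-1)^{p/2}\omega_{n-1}$ (with the doubling factor of radii producing the $2^{n-p}-1$ term), and linearizes $|d|^{p/(n-1)} \geq |d|$ using $|d|\geq 1$ and $p/(n-1)\to 1$, which yields the additive error $p(2^{n-p}-1)/(n-1)$ together with the logarithmic correction in $\log\M_\Omega(\star Ju/\gamma_n)$. The principal obstacle will be precisely this combinatorial-geometric bookkeeping: one has to synchronise the $\alpha$-geometric scales, the degree-cancellation estimate from Lemma~\ref{lemma:a_i}, and the superlinearity of $t\mapsto t^{p/(n-1)}$ to get sharp constants; the quantitative threshold \eqref{prop:alpha_p0} is what makes this matching work in the regime $p\to n^-$.
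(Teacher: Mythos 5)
Your proposal correctly identifies the key ingredients (Lemma~\ref{lemma:W1p_boundary}, the coarea formula, Lemma~\ref{lemma:a_i}) and the Sandier--Jerrard spirit, but the construction you describe is genuinely different from the paper's, and the differences create gaps that are not purely expository. The paper does not grow balls around all singularities and merge; instead it iteratively pairs a positive atom of $\star Ju/\gamma_n$ with the closest negative atom (or a point of $\partial\Omega$), adds the segment $[y_i,z_i]$ to $S$, removes that pair, and stops when the minimal such distance exceeds $\alpha_1=\alpha^{1/(n-p)}$; what remains is $X$. The energy estimate is then obtained on the \emph{shrinking} neighborhoods $D_k(t)=\{\dist(\cdot,E_0\cup\dots\cup E_k)<t\}$, $t<\alpha_{k+1}$, of an \emph{increasing} family of centers, where $E_0=X^+$ and $E_k$ collects the positive endpoints $y_i$ of segments of length in $(\alpha_{k+1},\alpha_k]$, and the whole argument is run twice, once for the positives and once for the negatives.

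This matters in at least two concrete ways. First, a merged ball in your scheme may contain atoms of both signs, so $d_B^k$ records only the net degree: your fundamental inequality involving $\sum_B|d_B^k|^{p/(n-1)}$ is blind to internal cancellations, which is precisely what $\M(S)$ must control, so \eqref{th:flat} does not obviously follow. By contrast, the paper uses minimality of the pairing to guarantee $D_k(\alpha_{k+1})\cap X^-=\varnothing$ and $z_i\notin D_k(\alpha_{k+1})$ for $i\in I_1\cup\dots\cup I_k$, so that $\star Ju(D_k(t))/\gamma_n\geq S_k-e_{k+1}-\sum_{j\geq k+2}\sum_{i\in I_j}\mathbbm{1}_{T_{k,i}}(t)$; and since $D_k(t)$ is a union of at most $S_k$ balls of radius $t$, one has $\mathcal{H}^{n-1}(\partial D_k(t))\leq S_k\omega_{n-1}t^{n-1}$. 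Together these two facts produce exactly the ratio $\max\{S_k-a_{k+1},0\}^\beta/S_k^{\beta-1}$ that Lemma~\ref{lemma:a_i} requires, and the lemma is applied once globally with $a_0=e_0+\dots+e_{k_0}$ and $a_k=e_{k_0+k}$ for $k\geq 1$, not ``along branches of a merger tree''. In a ball-growth family the number of components decreases with scale and there is no natural $S_k^{\beta-1}$ denominator, so the match with Lemma~\ref{lemma:a_i} is not automatic. Second, your growing balls will generically exit $\Omega$, where Lemma~\ref{lemma:W1p_boundary} cannot be applied; the paper avoids this entirely because atoms near $\partial\Omega$ get paired with boundary points, forcing $\dist(y_i,\partial\Omega)\geq|y_i-z_i|>\alpha_{k+1}$ for $i\in I_k$ and keeping every $D_k(t)$ inside $\Omega$. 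Unless both of these points are resolved, your sketch does not reach \eqref{th:mass}--\eqref{th:mass_sharp}.
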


\begin{proof}
We observe that, from our assumptions and Proposition~\ref{prop:Jacobian_integral}, it follows that $\star Ju/\gamma_n$ is an integral $0$-current, namely a finite sum of Dirac deltas.



Let us set
\begin{equation}\label{defn:alpha_k}
\alpha_k:=\alpha^{\frac{k}{n-p}} \qquad \forall k\in\N.
\end{equation}


Given an integral $0$-current $T$, let us set
$$T^+:=\{y\in \Omega : T(\{y\})>0\}
\qquad \text{and} \qquad
T^- :=\{z\in \Omega:T(\{z\})<0\},$$
and let us consider the minimization problem
$$m(T):=\min\{|y-z|:(y,z)\in T ^+ \times T ^- \cup T ^+ \times \partial \Omega \cup \partial\Omega \times T ^- \}.$$

We observe that a minimizer for $m(T)$ always exists, because both $T^+$ and $T^-$ are finite sets, and $\partial \Omega$ is compact.

In order to construct the currents $X$ and $S$, we argue iteratively in the following way.

First, we set $S_1=0$ and $X_1=\star Ju /\gamma_n$. Then, assuming that we have already defined $S_i$ and $X_i$ for some $i\in\N$, if either $X_i=0$ or $m(X_i)>\alpha_1=\alpha^{1/(n-p)}$, we set $S:=S_i$ and $X:=X_i$. Otherwise, namely if $m(X_i)\leq\alpha_1$, we take a minimizer $(y_i,z_i)$ for $m(X_i)$ and we set
$$S_{i+1}:=S_i + [y_i,z_i]\qquad\text{and}\qquad X_{i+1}:=X_i-\partial [y_i,z_i]=\star Ju/\gamma_n -\partial S_{i+1},$$
where $[y_i,z_i]$ denotes the segment joining $y_i$ and $z_i$, oriented in such a way that $\partial [y_i,z_i] = \delta_{y_i}-\delta_{z_i}$ in $\overline{\Omega}$.


We remark that the segment $[y_i,z_i]$ is contained in $\Omega$, with the possible exception of at most one endpoint, because we know that at least one of the endpoints belongs to $\supp X_i \subset \Omega$, and if a point in the interior of the segment $[y_i,z_i]$ belonged to $\partial \Omega$, this would contradict the minimality of $(y_i,z_i)$.

We also point out that at each iteration the total mass of $X_{i+1}$ decreases by either one (in the case in which $y_i\in \partial \Omega$ or $z_i\in \partial \Omega$) or two (in the case in which $(y_i,z_i)\in X_i ^+\times X_i ^-$). In any case, we have that $\M(X_{i+1})\leq \M(X_{i})-1$, so this procedure certainly ends after a finite number of iterations, providing us the currents $X$ and $S$, that can be written as
$$S=\sum_{i=1} ^{N} [y_i,z_i],$$
for some $N\in\N$. As a consequence, $\partial S$ can be written as
$$\partial S=\sum_{i=1} ^{N} \mathbbm{1}_{\Omega}(y_i)\delta_{y_i} - \mathbbm{1}_{\Omega}(z_i) \delta_{z_i},$$
where $\mathbbm{1}_\Omega(x)=1$ if $x \in \Omega$ and $\mathbbm{1}_\Omega(x)=0$ if $x \in \partial\Omega$. Moreover, we have that $|y_i -z_i| \leq \alpha_1$ for every $i\in \{1,\dots,N\}$, while
\begin{equation}\label{eq:m(X_M)}
m(X)>\alpha_1 \quad \text{or}\quad X=0,
\end{equation}
because this is the condition ending our iterative construction.

Now, for every $k\in\N^+$ we set
\begin{equation*}
I_k:=\{i\in \{1,\dots,N\}:y_i\in \Omega, |y_i-z_i|\in (\alpha_{k+1} ,\alpha_{k}]\},\qquad E_k:=\bigcup_{i\in I_k} \{y_i\},\qquad e_k:=\mathcal{H}^0(I_k).
\end{equation*}

In words, the set $I_k$ contains all the indices $i$ corresponding to positive atoms $y_i$ of $\partial S$ which are paired with some $z_i$ at distance between $\alpha_{k+1}$ and $\alpha_k$, the set $E_k$ is the (finite) subset of $\Omega$ supporting these positive atoms, and $e_k$ is the cardinality of $I_k$, namely their total mass.

We remark that a single point $y\in \Omega$ may appear more than once in the sequence $y_1,\dots,y_m$ (if $\partial S(\{y\})>1$) and each time it may be paired with a different point $z_{i}$. For this reason, the sets $E_k$ are not necessarily pairwise disjoint, and their cardinality may be less than $e_k$.

We also define $E_0:=X ^+$ and $e_0:=X(E_0)$, and we set
$$K:=\max\{k\in\N:e_k>0\},\qquad S_k:= \sum_{j=0} ^{k} e_j.$$

Finally, for every $k\in\{0,\dots,K\}$ and every $t\in (0,\alpha_{k+1})$ we set
$$D_{k}(t):=\{x\in \Omega:\dist(x, E_0\cup\dots\cup E_k)< t\}.$$

We point out that $\dist(y_i,\partial\Omega)\geq |y_i-z_i|> \alpha_{k+1}$ for every $i\in I_k$, because of the minimality property of $(y_i,z_i)$, and the fact that $y_i\notin \partial\Omega$. As a consequence, for every $t\in (0,\alpha_{k+1})$ the set $D_k(t)$ is the union of a finite number of balls that are contained in $\Omega$, and we have that
$$\partial D_{k}(t) =\{x\in \Omega:\dist(x, E_0\cup\dots\cup E_k)= t\}$$

Moreover, the function $x\mapsto \dist(x, E_0\cup\dots\cup E_k)$ is $1$-Lipschitz continuous in $\Omega$, and its gradient has unit norm almost everywhere.

Therefore, for every $k\in \{0,\dots,K\}$, the coarea formula yields
\begin{equation}\label{est:energy-coarea}
\int_{\Omega} |\nabla u(x)|^p\,dx  \geq \int_{0} ^{\alpha_{k+1}} dt \int_{\partial D_k(t)} |\nabla u(x)|^p \,d\mathcal{H}^{n-1}(x)\geq \int_{\alpha_{k+2}} ^{\alpha_{k+1}} dt \int_{\partial D_k(t)} |\nabla u(x)|^p \,d\mathcal{H}^{n-1}(x).
\end{equation}

Since $u$ is smooth around $\partial D_k(t)$ for all but finitely many $t>0$, from Lemma~\ref{lemma:W1p_boundary} we get that
\begin{equation}\label{est:energy-1}
\int_{\Omega} |\nabla u(x)|^p\,dx  \geq n^{\frac{p}{n-1}} (n-1)^{\frac{p}{2}} \int_{\alpha_{k+2}} ^{\alpha_{k+1}} \frac{\abs{\star Ju (D_k(t))}^{\frac{p}{n-1}}}{\mathcal{H}^{n-1}(\partial D_k(t))^{\frac{p}{n-1}-1}} dt.
\end{equation}

Now we write the Jacobian as
$$\star Ju/\gamma_n = \partial S +X = \partial S \mres {(\partial S) ^+} + \partial S \mres {(\partial S) ^-} + X \mres {X ^+} + X \mres {X ^-},$$
and we observe that the construction of $S$ and $X$ ensures that $(\partial S) ^+ \cup X ^+ = (\star Ju)^+$ and that $(\partial S )^- \cup X ^- = (\star Ju)^-$, while of course $(\star Ju)^+\cap (\star Ju)^-= \varnothing$.

Moreover, we know that $D_k(\alpha_{k+1})\cap X ^-=\varnothing$. Indeed, let us assume by contradiction that there exist $z\in X ^-$ and $y\in E_0\cup \dots \cup E_k$ such that $|y-z|\leq \alpha_{k+1}$. Then either $y\in E_0$ or $y=y_i$ for some $i\in I_1\cup\dots \cup I_k$. In the first case, we find a contradiction with \eqref{eq:m(X_M)}. In the second case we deduce that $|y_i-z|\leq \alpha_{k+1}< |y_i-z_i|$, which contradicts the minimality property of $(y_i,z_i)$.

With a similar contradiction argument, we also deduce that $z_i\notin D_k(\alpha_{k+1})$ for every $i\in I_{1}\cup \dots \cup I_k$.

Therefore, for every $k\in \{1,\dots,K\}$ and every $t\in (0,\alpha_{k+1})$ we have that
\begin{align}
\star Ju(D_k(t))/\gamma_n &= X\mres {X ^+}(D_k(t))+ \partial S (D_k(t))\nonumber\\
&=e_0 + \sum_{j=1} ^{k} \sum_{i \in I_{j}} \mathbbm{1}_{D_k(t)}(y_i)-\mathbbm{1}_{D_k(t)}(z_i) + \sum_{j=k+1} ^{K} \sum_{i \in I_{j}} \mathbbm{1}_{D_k(t)}(y_i)-\mathbbm{1}_{D_k(t)}(z_i)\quad\nonumber\\
&=e_0 + \sum_{j=1} ^{k} e_j + \sum_{j=k+1} ^{K} \sum_{i \in I_{j}} \mathbbm{1}_{D_k(t)}(y_i)-\mathbbm{1}_{D_k(t)}(z_i).\label{est:Ju-1}
\end{align}

Now we observe that $\mathbbm{1}_{D_k(t)}(y_i)-\mathbbm{1}_{D_k(t)}(z_i)=0$ unless $t$ is between $\dist(y_i, E_0\cup\dots\cup E_k)$ and $\dist(z_i, E_0\cup\dots\cup E_k)$. As a consequence, we deduce that the set
$$T_{k,i}:=\{t>0: \mathbbm{1}_{D_k(t)}(y_i)-\mathbbm{1}_{D_k(t)}(z_i)< 0\}$$
satisfies
$$\Leb^1(T_{k,i})\leq \abs{\dist(y_i, E_0\cup\dots\cup E_k)-\dist(z_i, E_0\cup\dots\cup E_k)} \leq |y_i-z_i|,$$
where $\Leb^1$ is the Lebesgue measure, and hence
\begin{equation}\label{est:Leb-T}
\Leb^1(T_{k,i})\leq \alpha_{j}\qquad \forall i\in I_j \quad \forall j\in\{k+1,\dots,K\}.
\end{equation}

Continuing from \eqref{est:Ju-1} we get that
\begin{equation}\label{est:Ju-1.5}
\star Ju(D_k(t))/\gamma_n \geq S_k - \sum_{j=k+1} ^{K} \sum_{i \in I_{j}} \mathbbm{1}_{T_{k,i}}(t)
\geq S_k - e_{k+1} -\sum_{j=k+2} ^{K} \sum_{i \in I_{j}} \mathbbm{1}_{T_{k,i}}(t),
\end{equation}
which implies that
$$\abs{\star Ju(D_k(t))}\geq \gamma_n \max\{\star Ju(D_k(t)),0\}
\geq \gamma_n\max\Biggl\{S_k - e_{k+1} -\sum_{j=k+2} ^{K} \sum_{i \in I_{j}} \mathbbm{1}_{T_{k,i}}(t),0 \Biggr\},$$
and hence, since $\max\{a-b,0\}^q \geq \max\{a,0\}^q- q\max\{a,0\}^{q-1}b$ for every $q\geq 1$ and every $a\in \R$ and $b\geq 0$, we obtain that
\begin{align}
\abs{\star Ju(D_k(t))}^{\frac{p}{n-1}}&\geq \gamma_n^\frac{p}{n-1} \Biggl(\max\bigl\{S_k - e_{k+1} ,0\bigr\}^{\frac{p}{n-1}}\nonumber\\
&\quad\mbox{}- \frac{p}{n-1}\max\bigl\{S_k - e_{k+1} ,0\bigr\}^{\frac{p}{n-1}-1} \sum_{j=k+2} ^{K} \sum_{i \in I_{j}} \mathbbm{1}_{T_{k,i}}(t)\Biggr)\nonumber\\
&\geq \gamma_n^{\frac{p}{n-1}}\Biggl(\max\bigl\{S_k - e_{k+1} ,0\bigr\}^{\frac{p}{n-1}} - \frac{p}{n-1} S_k^{\frac{p}{n-1}-1} \sum_{j=k+2} ^{K} \sum_{i \in I_{j}} \mathbbm{1}_{T_{k,i}}(t)\Biggr).\quad \label{est:Ju-2}
\end{align}

On the other hand, since $D_k(t)$ is the union of at most $S_k$ balls of radius $t$, it holds that
\begin{equation}\label{est:meas_spheres}
\mathcal{H}^{n-1}(\partial D_k(t))\leq S_k \omega_{n-1} t^{n-1}
\end{equation}

Plugging \eqref{est:Ju-2} and \eqref{est:meas_spheres} into \eqref{est:energy-1} we obtain that
\begin{equation}\label{est:energy-2}
\int_{\Omega} |\nabla u(x)|^p\,dx \geq (n-1)^{\frac{p}{2}} \omega_{n-1} \int_{\alpha_{k+2}} ^{\alpha_{k+1}} \left[ \frac{\max\bigl\{S_k - e_{k+1} ,0\bigr\}^{\frac{p}{n-1}}}{S_k ^{\frac{p}{n-1}-1} t^{p-n+1}}  - \frac{p}{n-1} \sum_{j=k+2} ^{K} \sum_{i \in I_{j}} \frac{\mathbbm{1}_{T_{k,i}}(t)}{t^{p-n+1}}\right]dt.
\end{equation}

From \eqref{est:Leb-T} we deduce that for every $k\in\{0,\dots,K-2\}$, for every $j\in \{k+2,\dots,K\}$ and for every $i\in I_{j}$ it holds that
$$\int_{\alpha_{k+2}} ^{\alpha_{k+1}} \frac{\mathbbm{1}_{T_{k,i}}(t)}{t^{p-n+1}}\, dt \leq \frac{\alpha_{j}}{\alpha_{k+2}^{p-n+1}}.$$

As a consequence, we find that
$$\int_{\alpha_{k+2}} ^{\alpha_{k+1}} \sum_{j=k+2} ^{K} \sum_{i \in I_{j}} \frac{\mathbbm{1}_{T_{k,i}}(t)}{t^{p-n+1}}\, dt \leq \sum_{j=k+2} ^{K} \frac{e_j \alpha_{j}}{\alpha_{k+2}^{p-n+1}}.$$

Moreover, we have that
$$\int_{\alpha_{k+2}} ^{\alpha_{k+1}} \frac{\max\bigl\{S_k - e_{k+1} ,0\bigr\}^{\frac{p}{n-1}}}{S_k ^{\frac{p}{n-1}-1} t^{p-n+1}} \,dt = \frac{\max\bigl\{S_k - e_{k+1} ,0\bigr\}^{\frac{p}{n-1}}}{S_k ^{\frac{p}{n-1}-1} } \cdot \frac{\alpha_{k+1}^{n-p} -\alpha_{k+2} ^{n-p}}{n-p},$$
so \eqref{est:energy-2} becomes
\begin{multline*}
\hspace{-0.9em}\int_{\Omega} |\nabla u(x)|^p\,dx \geq (n-1)^{\frac{p}{2}} \omega_{n-1}\Biggl(\frac{\max\bigl\{S_k - e_{k+1} ,0\bigr\}^{\frac{p}{n-1}}}{S_k^{\frac{p}{n-1}-1} } \cdot \frac{\alpha_{k+1}^{n-p} -\alpha_{k+2} ^{n-p}}{n-p} - \frac{p}{n-1} \sum_{j=k+2} ^{K} \frac{e_j \alpha_{j}}{\alpha_{k+2}^{p-n+1}}\Biggr)\\
=(n-1)^{\frac{p}{2}} \omega_{n-1}\Biggl(\frac{\max\bigl\{S_k - e_{k+1} ,0\bigr\}^{\frac{p}{n-1}}}{S_k ^{\frac{p}{n-1}-1} } \cdot \frac{\alpha^{k+1}(1-\alpha)}{n-p} - \frac{p}{n-1} \sum_{j=k+2} ^{K} e_j \alpha^{\frac{j}{n-p}-\frac{(k+2)(p-n+1)}{n-p}}\Biggr),\hspace{-0.9em}
\end{multline*}
where in the last line we have just used \eqref{defn:alpha_k}.

Multiplying both sides by $(n-p)\alpha^k$ and summing over $k\in\{k_0,\dots,K\}$, for some integer $k_0\in \{0,\dots,K\}$, we obtain that
\begin{multline}
\frac{\alpha^{k_0}}{1-\alpha}(n-p)\int_{\Omega} |\nabla u(x)|^p\,dx \geq (n-1)^{\frac{p}{2}} \omega_{n-1} \Biggl(\sum_{k=k_0} ^{K} \frac{\max\bigl\{S_k - e_{k+1} ,0\bigr\}^{\frac{p}{n-1}}}{S_k ^{\frac{p}{n-1}-1}} \cdot \alpha^{2k+1}(1-\alpha)\\
- \frac{p(n-p)}{n-1} \sum_{k=k_0} ^{K} \sum_{j=k+2} ^{K} e_j \alpha^{k+\frac{j}{n-p}-\frac{(k+2)(p-n+1)}{n-p}}\Biggr),\label{est:energy-3}
\end{multline}
where the last sum is intended to be equal to zero when $k+2>K$, and we recall that $e_k=0$ when $k>K$.

Now we can compute the double sum as follows
\begin{align*}
\sum_{k=k_0} ^{K} \sum_{j=k+2} ^{K} e_j \alpha^{k+\frac{j}{n-p}-\frac{(k+2)(p-n+1)}{n-p}}&=\sum_{j=k_0 +2} ^{K} e_j \alpha^{\frac{j-2(p-n+1)}{n-p}} \sum_{k=k_0} ^{j-2} \alpha^{k\bigl(-\frac{1}{n-p} +2\bigr)}\\
&= \sum_{j=k_0 +2} ^{K} e_j \alpha^{\frac{j-2(p-n+1)}{n-p}} \frac{\alpha^{(j-1)\bigl(-\frac{1}{n-p}+2\bigr)} -\alpha^{k_0\bigl(-\frac{1}{n-p} +2\bigr)}}{\alpha^{-\frac{1}{n-p}+2}-1}\\
&\leq \sum_{j=k_0 +2} ^{K} e_j \alpha^{\frac{j-2(p-n+1)}{n-p}} \frac{\alpha^{(j-1)\left(-\frac{1}{n-p}+2\right)} }{\alpha^{-\frac{1}{n-p}+2}-1}\\
&= \frac{\alpha^{-\frac{1}{n-p}}}{\alpha^{-\frac{1}{n-p}+2}-1} \sum_{j=k_0 +2} ^{K} e_j \alpha^{2j},
\end{align*}
where the inequality holds because $\alpha^{2-1/(n-p)}>1$ thanks to \eqref{prop:alpha_p0}.

As for the first sum in \eqref{est:energy-3}, we apply Lemma~\ref{lemma:a_i} with
$$\beta=\frac{p}{n-1},\qquad \lambda=\alpha^2,\qquad a_0:= e_0+ \dots +e_{k_0}, \qquad a_k:=e_{k_0 +k} \quad \forall k>0, $$
so we find that
\begin{align*}
\sum_{k=k_0} ^{K} \frac{\max\bigl\{S_k - e_{k+1} ,0\bigr\}^{\frac{p}{n-1}}}{S_k ^{\frac{p}{n-1}-1} } \cdot \alpha^{2k+1}(1-\alpha) &\geq \alpha^{2k_0 +1}(1-\alpha) \cdot \frac{2\alpha^2-1}{2\alpha^2}\sum_{k=0} ^{K-k_0} a_k \alpha^{2k}\\
&\geq \frac{(1-\alpha)(2\alpha^2-1)}{2\alpha}\sum_{k=k_0} ^{K} e_k \alpha^{2k}.
\end{align*}

Plugging the last two estimates into \eqref{est:energy-3}, we get that
\begin{multline*}
\frac{\alpha^{k_0}}{1-\alpha} (n-p)\int_{\Omega} |\nabla u(x)|^p\,dx\\
\geq (n-1)^{\frac{p}{2}} \omega_{n-1} \biggl( \frac{(1-\alpha)(2\alpha^2-1)}{2\alpha} - \frac{p(n-p)}{n-1} \frac{\alpha^{-\frac{1}{n-p}}}{\alpha^{-\frac{1}{n-p}+2}-1} \biggr) \sum_{k=k_0} ^{K} e_k \alpha^{2k},
\end{multline*}
and hence, recalling \eqref{prop:alpha_p0}, we finally obtain that
\begin{equation}\label{est:final_ek}
\sum_{k=k_0} ^{K} e_k \alpha^{2k} \leq \frac{1}{(n-1)^{\frac{p}{2}} \omega_{n-1}} \cdot \frac{4\alpha}{(1-\alpha)^2 (2\alpha^2-1)}\cdot \alpha^{k_0} \cdot(n-p) \int_{\Omega} |\nabla u(x)|^p\,dx,
\end{equation}
for every $k_0\in\{0,\dots,K\}$.

Now we consider
\begin{gather*}
I_k ':=\{i\in \{1,\dots,N\}:z_i\in \Omega, |y_i-z_i|\in (\alpha_{k+1} ,\alpha_{k}]\},\qquad E_k ':=\bigcup_{i\in I_k} \{z_i\},\qquad e_k:=\mathcal{H}^0(I_k '),\\
E_0 ':=X ^-, \qquad e_0 ':=-X(E_0'),\qquad K':=\max\{k\in \N:e_k '>0\}, \qquad S_k':=\sum_{j=0} ^{k} e_j ',\\
D_k '(t):=\{x\in \Omega:\dist(x, E_0 ' \cup\dots\cup E_k ')< t\},
\end{gather*}
and we repeat the same argument used above, so we end up with
\begin{equation}\label{est:final_ek'}
\sum_{k=k_0} ^{K} e_k ' \alpha^{2k} \leq \frac{1}{(n-1)^{\frac{p}{2}} \omega_{n-1}} \cdot \frac{4\alpha}{(1-\alpha)^2 (2\alpha^2-1)}\cdot \alpha^{k_0} \cdot(n-p) \int_{\Omega} |\nabla u(x)|^p\,dx,
\end{equation}
for every $k_0\in\{0,\dots,K\}$.

We can now sum \eqref{est:final_ek} and \eqref{est:final_ek'}, so we get that
\begin{equation}\label{est:final_ek+ek'}
\sum_{k=k_0} ^{K} (e_k +e_k ') \alpha^{2k} \leq C(n,\alpha) \cdot \alpha^{k_0} \cdot(n-p) \int_{\Omega} |\nabla u(x)|^p\,dx,
\end{equation}
where
$$C(n,\alpha):=\frac{1}{(n-1)^{\frac{p}{2}} \omega_{n-1}} \cdot \frac{8\alpha}{(1-\alpha)^2 (2\alpha^2-1)}.$$

We are now ready conclude the first part of the proof. First, we notice that $\M_{\Omega}(X)=e_0+e_0'$, and hence
$$\M_{\Omega}(X)\leq C(n,\alpha) \cdot(n-p) \int_{\Omega} |\nabla u(x)|^p\,dx,$$
that is \eqref{th:mass}.

Then we observe that for every $i\in \{1,\dots, N\}$ we have that at least one between $y_i$ and $z_i$ lies in $\Omega$, and we recall that $|y_i-z_i|\leq \alpha_1$, so every $i$ belongs to some $I_k$ or $I_k'$. It follows that
$$\M_{\Omega}(S)\leq \sum_{i=1} ^{N} |y_i-z_i|\leq \sum_{k=1} ^{\max\{K,K'\}} \sum_{i\in I_k\cup I_k '} |y_i-z_i|\leq \sum_{k=1} ^{\max\{K,K'\}} (e_k+e_k')\alpha_{k}.$$

Recalling \eqref{defn:alpha_k}, the first property in \eqref{prop:alpha_p0}, and \eqref{est:final_ek+ek'}, we obtain that
\begin{multline*}
\M_{\Omega}(S)\leq \sum_{k=1} ^{\max\{K,K'\}} (e_k+e_k') \alpha^{\frac{k}{n-p}}= \alpha^{\frac{1}{2(n-p)}} \sum_{k=1} ^{\max\{K,K'\}} (e_k+e_k') \alpha^{\frac{k}{2(n-p)}}\alpha^{\frac{k-1}{2(n-p)}}\\
\leq \alpha^{\frac{1}{2(n-p)}} \sum_{k=1} ^{\max\{K,K'\}} (e_k+e_k') \alpha^{2k} \leq \alpha^{\frac{1}{2(n-p)}} \cdot C(n,\alpha)\alpha\cdot  (n-p) \int_{\Omega} |\nabla u(x)|^p\,dx,
\end{multline*}
that is \eqref{th:flat}.

We are left to prove \eqref{th:mass_sharp}. To this end, we go back to \eqref{est:energy-coarea} with $k=0$, but this time we do not neglect the energy in the set where $\dist(x,E_0)\in [0,\alpha_2]$, so we obtain that
$$\int_{\{\dist(x,E_0)<\alpha_1\}}|\nabla u(x)|^p\,dx\geq \int_{0} ^{\alpha_1} dt \int_{\partial D_0(t)} |\nabla u(x)|^p\,d\mathcal{H}^{n-1}(x),$$
and, similarly, also
$$\int_{\{\dist(x,E_0 ')<\alpha_1\}}|\nabla u(x)|^p\,dx\geq \int_{0} ^{\alpha_1} dt \int_{\partial D_0 '(t)} |\nabla u(x)|^p\,d\mathcal{H}^{n-1}(x).$$

Since the sets where $\dist(x,E_0)<\alpha_1$ and where $\dist(x,E_0')<\alpha_1$ are disjoint (and contained in $\Omega$) because of \eqref{eq:m(X_M)}, we also have that
\begin{equation}\label{est:coarea_sharp}
\int_{\Omega}|\nabla u(x)|^p\,dx\geq \int_{0} ^{\alpha_1} dt \int_{\partial D_0(t)} |\nabla u(x)|^p\,d\mathcal{H}^{n-1}(x) + \int_{0} ^{\alpha_1} dt \int_{\partial D_0 '(t)} |\nabla u(x)|^p\,d\mathcal{H}^{n-1}(x).
\end{equation}

Let us focus on the first addendum in the right-hand side, since the other one is similar. As before, we apply Lemma~\ref{lemma:W1p_boundary}, and we get that
$$\int_{0} ^{\alpha_1} dt \int_{\partial D_0(t)} |\nabla u(x)|^p\,d\mathcal{H}^{n-1}(x) \geq  n^{\frac{p}{n-1}} (n-1)^{\frac{p}{2}} \int_{0} ^{\alpha_1} \frac{\abs{\star Ju(D_0(t))}^{\frac{p}{n-1}}}{\mathcal{H}^{n-1}(\partial D_0(t))^{\frac{p}{n-1}-1}}\,dt.$$

Arguing as above, and more precisely using \eqref{est:Ju-1.5} and \eqref{est:meas_spheres}, we obtain that
$$\abs{\star Ju(D_0(t))}\geq \gamma_n e_0 - \gamma_n\sum_{j=1} ^{K} \sum_{i\in I_j} \mathbbm{1}_{T_{k,i}}(t),
\qquad\text{and}\qquad
\mathcal{H}^{n-1}(\partial D_0(t))\leq e_0 \omega_{n-1} t^{n-1},$$
and hence, exploiting the inequality $(a-b)^q\geq a^q - qa^{q-1}b$, we get
$$\frac{\abs{\star Ju(D_0(t))}^{\frac{p}{n-1}}}{\mathcal{H}^{n-1}(\partial D_0(t))^{\frac{p}{n-1}-1}}\geq \frac{\omega_{n-1}}{n^{\frac{p}{n-1}}} \Biggl(\frac{e_0}{t^{p-n+1}} - \frac{p}{n-1}\sum_{j=1} ^{K} \sum_{i\in I_j} \frac{\mathbbm{1}_{T_{k,i}}(t)}{t^{p-n+1}}\Biggr).$$

Now we estimate the negative contribution of the double sum in a different and more careful way than above. We observe that $T_{k,i}$ is not empty if and only if $\dist(z_i,E_0)<\dist(y_i,E_0)$, in which case $T_{k,i}=(\dist(z_i,E_0),\dist(y_i,E_0)]$. Moreover, the minimality property of $(y_i,z_i)$ and the fact that $E_0\subset (\star Ju)^+$, ensure that $|y_i-z_i|\leq \dist(z_i,E_0)$. Thus, it holds that
\begin{equation}\label{est:Tik-2}
\int_{0} ^{\alpha_1}\frac{\mathbbm{1}_{T_{k,i}}(t)}{t^{p-n+1}}\,dt \leq \int_{|y_i-z_i|} ^{2|y_i-z_i|} \frac{dt}{t^{p-n+1}}=\frac{2^{n-p}-1}{n-p}|y_i-z_i|^{n-p}.
\end{equation}

Putting together the last estimates, and recalling that $|y_i-z_i|\leq \alpha_j$ for every $i\in I_j$, we obtain that
\begin{align*}
(n-p)\int_{0} ^{\alpha_1} dt \int_{\partial D_0(t)} |\nabla u(x)|^p\,d\mathcal{H}^{n-1}(x) &\geq (n-1)^{\frac{p}{2}}\omega_{n-1} \Biggl(e_0 \alpha_1 ^{n-p} - \frac{p(2^{n-p}-1)}{n-1}\sum_{j=1} ^{K} e_j\alpha_j^{n-p} \Biggr)\\
&=(n-1)^{\frac{p}{2}}\omega_{n-1} \Biggl(e_0 \alpha - \frac{p(2^{n-p}-1)}{n-1}\sum_{j=1} ^{K} e_j\alpha^{j} \Biggr).
\end{align*}

With the same argument, we also obtain a similar estimate for the second addendum in the right-hand side of \eqref{est:coarea_sharp}, so we finally deduce that
\begin{align}
(n-p)\int_{\Omega} |\nabla u(x)|^p\,dx &\geq (n-1)^{\frac{p}{2}}\omega_{n-1} \Biggl((e_0+e_0') \alpha - \frac{p(2^{n-p}-1)}{n-1}\sum_{j=1} ^{\max\{K,K'\}} (e_j+e_j')\alpha^{j} \Biggr)\nonumber\\
&=(n-1)^{\frac{p}{2}}\omega_{n-1} \Biggl(\alpha\M_{\Omega}(X) - \frac{p(2^{n-p}-1)}{n-1}\sum_{j=1} ^{\max\{K,K'\}} (e_j+e_j')\alpha^{j} \Biggr).\qquad \label{est:sharp-1}
\end{align}

We still need to estimate the last sum. We point out that, at this point, we only have \eqref{est:final_ek+ek'}, which is not enough to deduce that this last sum is bounded by the $p$-energy of $u$.

In order to estimate this sum, we consider, for every $k\in\N$ the numbers
$$J_k:=\min\Bigl\{j\in\N: \alpha^j\leq (n-p)^{\frac{k}{2}}\Bigr\},
\quad\text{and}\quad
\widehat{K}:=\min\biggl\{k\in\N : (n-p)^{\frac{k}{2}} \leq \frac{1}{\M_\Omega (\star Ju/\gamma_n)}\biggr\},
$$
and we observe that
\begin{equation}\label{est:hatK}
\widehat{K}\leq 1+2\frac{\log\bigl(\M_\Omega(\star Ju/\gamma_n)\bigr)}{|\log (n-p)|}.
\end{equation}

Exploiting the definitions of $J_k$ and $\widehat{K}$ we obtain that
\begin{align}
\sum_{j=0} ^{\max\{K,K'\}} (e_j+e_j')\alpha^{j} &=\sum_{k=0} ^{\widehat{K}-1} \sum_{j=J_k} ^{J_{k+1}-1} (e_j+e_j')\alpha^{j}+ \sum_{j=J_{\widehat{K}}} ^{+\infty} (e_j+e_j')\alpha^{j}\nonumber\\
&\leq \sum_{k=0} ^{\widehat{K}-1} \sum_{j=J_k} ^{J_{k+1}-1} (e_j+e_j') \frac{\alpha^{2j}}{(n-p)^{\frac{k+1}{2}}} +\sum_{j=J_{\widehat{K}}} ^{+\infty} (e_j+e_j')\cdot (n-p)^{\frac{\widehat{K}}{2}}\nonumber\\
&\leq \sum_{k=0} ^{\widehat{K}-1} \frac{1}{(n-p)^{\frac{k+1}{2}}} \sum_{j=J_k} ^{+\infty} (e_j+e_j')\alpha^{2j} +\sum_{j=J_{\widehat{K}}} ^{+\infty} (e_j+e_j')\cdot\frac{1}{\M_{\Omega}(\star Ju/\gamma_n)},\qquad \label{est:sharp-2}
\end{align}
where we recall that, by definition, $e_j=0$ if $j>K$ and $e_j '=0$ if $j>K'$.

Now we estimate the first double sum using \eqref{est:final_ek+ek'} and \eqref{est:hatK}, so we get that
\begin{align*}
\sum_{k=0} ^{\widehat{K}-1} \frac{1}{(n-p)^{\frac{k+1}{2}}} \sum_{j=J_k} ^{+\infty} (e_j+e_j')\alpha^{2j}  &\leq \sum_{k=0} ^{\widehat{K}-1} \frac{\alpha^{J_k}}{(n-p)^{\frac{k+1}{2}}}\cdot C(n,\alpha) \cdot (n-p)\int_{\Omega} |\nabla u(x)|^p\,dx\\
&\leq \frac{\widehat{K}}{\sqrt{n-p}} \cdot C(n,\alpha) \cdot (n-p)\int_{\Omega} |\nabla u(x)|^p\,dx\\
&\leq \biggr(1+2\frac{\log(\M_\Omega(\star Ju/\gamma_n)}{|\log (n-p)|}\biggr)\cdot C(n,\alpha) \cdot \sqrt{n-p}\int_{\Omega} |\nabla u(x)|^p\,dx.
\end{align*}

Moreover, we clearly have that
$$\sum_{j=J_{\widehat{K}}} ^{+\infty} (e_j+e_j')\cdot\frac{1}{\M_{\Omega}(\star Ju/\gamma_n)}\leq \frac{1}{\M_{\Omega}(\star Ju/\gamma_n)} \sum_{j=0} ^{+\infty} (e_j+e_j') = 1.$$

Plugging the last two estimates into \eqref{est:sharp-2} we obtain that
$$\sum_{j=0} ^{\max\{K,K'\}} (e_j+e_j')\alpha^{j}\leq \biggr(1+2\frac{\log(\M_\Omega(\star Ju/\gamma_n)}{|\log (n-p)|}\biggr)\cdot C(n,\alpha) \cdot \sqrt{n-p}\int_{\Omega} |\nabla u(x)|^p\,dx +1,$$
and hence, from \eqref{est:sharp-1} we deduce that
\begin{multline*}
(n-p)\int_{\Omega} |\nabla u(x)|^p\,dx \geq (n-1)^{\frac{p}{2}}\omega_{n-1} \Biggl(\alpha\M_{\Omega}(X) - \frac{p(2^{n-p}-1)}{n-1}\\
-\frac{p(2^{n-p}-1)}{n-1} \biggr(1+2\frac{\log(\M_\Omega(\star Ju/\gamma_n)}{|\log (n-p)|}\biggr)\cdot C(n,\alpha) \cdot \sqrt{n-p}\int_{\Omega} |\nabla u(x)|^p\,dx \Biggr),
\end{multline*}
which implies \eqref{th:mass_sharp}, with
$$C=C(n,\alpha)\cdot n(n-1)^{\frac{p}{2}-1}\omega_{n-1}.$$
%
%


\end{proof}

\begin{rem}\label{rem:fixed_p}
We point out that the assumption that $u$ is smooth outside a finite set is not essential but it just allows for a more elementary proof. Indeed, since such maps are dense in $W^{1,p}(\Omega;\Sp^{n-1})$, one can recover \eqref{th:mass} and \eqref{th:flat} for a general map just by approximation. 

On the other hand, obtaining \eqref{th:mass_sharp} for $u\in W^{1,p}(\Omega;\Sp^{n-1})$ is more delicate, since in general the total mass of the Jacobian does not pass to the limit, so one would need to repeat the argument for $u\in W^{1,p}(\Omega;\Sp^{n-1})$ with $\M_\Omega(\star Ju)<+\infty$ (otherwise \eqref{th:mass_sharp} is trivial). However, this would just require a little generalization of Lemma~\ref{lemma:W1p_boundary}.

In particular, Theorem~\ref{teo:X_M+X_F} provides some estimates for the minimum problem \eqref{min_prescr_J} in the case $m=0$ also for a fixed $p\in (n-1,n)$, provided \eqref{prop:alpha_p0} holds.
\end{rem}

\begin{rem}
We discuss here a key point in the previous proof that might seem counterintuitive at first sight. Indeed, one could think that the second inequality in \eqref{est:energy-coarea} could be avoided and that it only worsens our estimates by a constant factor $1-\alpha$.

Actually, if we skipped that inequality, we would arrive at \eqref{est:energy-2} with the integral over $[0,\alpha_{k+1}]$ instead of $[\alpha_{k+2},\alpha_{k+1}]$. While this would yield a better estimate from below for the first term in the integral, in this case we would not be able to estimate the double sum involving the characteristic functions of the sets $T_{k,i}$, not even exploiting the improved description of these sets that we used afterwards to prove \eqref{th:mass_sharp}.

In fact, by reducing the integral to $t\in [\alpha_{k+2},\alpha_{k+1}]$, we obtain that the highest possible value for the integral on the intervals $T_{k,i}$ with $i\in I_j$ is achieved with $T_{k,i}=[\alpha_{k+2},\alpha_{k+2}+\alpha_j]$. On the other hand, if we kept the integral on $[0,\alpha_{k+1}]$, the worst case scenario would be $T_{k,i}=[\alpha_j , 2\alpha_{j}]$, as in \eqref{est:Tik-2}, which would result in a much worse estimate when $j$ is larger than $k+2$. For this reason, the second inequality in \eqref{est:energy-coarea} is crucial for the proof of \eqref{th:mass} and $\eqref{th:flat}$.

However, in order to obtain \eqref{th:mass_sharp} without a factor $1-\alpha$ in front of $\M_\Omega(X)$, we are forced to keep the integral in \eqref{est:coarea_sharp} on the whole $[0,\alpha_{1}]$, and hence to use \eqref{est:Tik-2} which produces the negative term in the right-hand side of \eqref{est:sharp-1}. Then we manage to estimate that term, but only exploiting \eqref{est:final_ek+ek'}, which was previously obtained by reducing the domain of integration and at the cost of allowing the appearance of the logarithm of the total mass, which in general cannot be bounded by the energy.
\end{rem}


We can now prove the compactness and $\Gamma$-liminf part of Theorem~\ref{main_res} in the case $m=0$.

\begin{proof}[Proof of $(i)$ in Theorem~\ref{main_res} when $m=0$]
Since $\Omega$ is a bounded Lipschitz domain, we can find an open neighborhood $U\subset \R^{n+m}$ of $\Omega$, so that any map $u\in W^{1,p}(\Omega;\Sp^{n-1})$ for some $p\in (n-1,n)$ can be extended to a map in $W^{1,p}(U,\Sp^{n-1})$, still denoted by $u$, in such a way that
$$\int_{U}|\nabla u|^p\,dx\leq C_{\Omega,U} \int_\Omega |\nabla u|^p\,dx,$$
for some constant $C_{\Omega,U}>0$ that does not depend on $p$ and $u$.

This can be done, for example, by reflection across $\partial \Omega$, as in \cite[Section~8]{2005-Indiana-ABO} or \cite[Section~15.6]{Bre-Mi}. In particular, \eqref{hp:energy-bound} holds also with $U$ instead of $\Omega$.

Hence, if we fix any $\delta \in (0,1/8)$ and any sequence $p_k\to n^-$ (and we set $u_k:=u_{p_k}$), we can apply Proposition~\ref{prop:approx}, so we obtain a new sequence of maps $\{\tilde{u}_k\}$ defined on $\Omega$, that are smooth outside a finite set (which depends on $k$), and satisfy \eqref{est:approx1}, \eqref{est:approx2} and \eqref{est:approx3}. In particular, it holds that
$$\limsup_{k\to +\infty}\, (n-\tilde{p}_k)\int_\Omega |\nabla \tilde{u}_k |^{\tilde{p}_k}\,dx<+\infty .$$

Now let us fix $\alpha \in (\sqrt{3}/2,1)$. For every $k$ which is sufficiently large that \eqref{prop:alpha_p0} holds with $p=\tilde{p}_k$, we can apply Theorem~\ref{teo:X_M+X_F} with $u=\tilde{u}_k$, so we find a sequence of integral $0$-currents $\{X_k\}$ and a sequence of integral $1$-currents $S_k$ for which $\star J\tilde{u}_k/\gamma_n= X_k+\partial S_k$ and such that \eqref{th:mass}, \eqref{th:flat} and \eqref{th:mass_sharp} hold with $p=\tilde{p}_k$.

In particular, $\M_\Omega (X_k)$ is uniformly bounded, and hence there exists a subsequence (not relabeled) and an integral $0$-current $\Sigma$ such that $\F_\Omega(X_k - \Sigma)\to 0$. Since we also know that $\M_\Omega(S_k)\to 0$, this implies that $\star J\tilde{u}_k\to \gamma_n \Sigma$ in the flat sense, and hence, thanks to Remark~\ref{rem:flat_approx}, we also obtain that $\star Ju_k\to \gamma_n \Sigma$.

As for the energies, we fix an open set $\Omega' \Subset \Omega$, and we exploit again Theorem~\ref{teo:X_M+X_F} in $\Omega'$, so we find sequence $\{X_k '\}$ and $\{S_k '\}$ (which do not necessarily coincide with the restriction of $\{X_k\}$ and $\{S_k\}$ to $\Omega'$) such that $\star J\tilde{u}_k/\gamma_n= X_k'+\partial S_k'$ in $\Omega'$, and $\M_{\Omega'}(S_k')\to 0$, so in particular $\F_{\Omega'}(X_k' -\Sigma)\to 0$.

Now we observe that \eqref{est:approx2} implies that
$$\limsup_{k\to +\infty} \frac{\M_\Omega(\star J\tilde{u}_k/\gamma_n)}{|\log (n-\tilde{p}_k)|}<+\infty,$$
so from \eqref{th:mass_sharp} we deduce that
$$ \liminf_{k\to +\infty} \, (n-\tilde{p}_k) \int_{\Omega'} |\nabla \tilde{u}_k|^{\tilde{p}_k}\,dx \geq \liminf_{k\to +\infty}(n-1)^{\frac{p_k}{2}}\omega_{n-1} \alpha \M_{\Omega'}(X_k ')\geq (n-1)^{\frac{n}{2}}\omega_{n-1} \alpha \M_{\Omega'}(\Sigma),$$
and therefore \eqref{est:approx3}, applied with $\Omega_1=\Omega'$ and $\Omega_2=\Omega$, yields
$$\liminf_{k\to +\infty} \, (n-p_k) \int_{\Omega} |\nabla u_k|^{p_k}\,dx \geq  (n-1)^{\frac{n}{2}}\omega_{n-1} \frac{\alpha}{L_\delta} \M_{\Omega'}(\Sigma).$$

Letting $\alpha\to 1^{-}$ and $\delta\to 0^{+}$, since this hold for every $\Omega'\Subset \Omega$, we obtain \eqref{eq: liminf main}.
\end{proof}

\section{Proof of the main result in general dimension and codimension}\label{sec: compactness and liminf general dim}

\subsection{Compactness and lower bound}\label{sbs: compact and lower bound}

In this section, we extend the proof of Theorem~\ref{main_res} to the general case in which $m>0$, by following the same strategy used in~\cite{2005-Indiana-ABO} for the Ginzburg-Landau functional.

The main idea is to deform the $m$-currents $\star Ju_p$ onto an $m$-dimensional grid in $\R^{n+m}$, in order to obtain a sequence of integral polyhedral $m$-currents that are close to the starting ones in the flat norm, and whose total mass can be estimated by looking only at the restriction of $u_p$ to a dual $n$-dimensional grid, so that we can apply our estimate for the case $m=0$.

The formalization of this argument is quite involved and requires the introduction of some notation. Following \cite[Section~3.2]{2005-Indiana-ABO}, for every $\ell>0$ and every $a \in \R^{n+m}$, let
$$\G(\ell,a):=\{a+z\ell+[0,\ell]^{n+m}:z\in \Z^{n+m}\}$$
denote the family of all cubes with sides of length $\ell$, whose vertices lie on the lattice $a+\ell\Z^{n+m}$.

We denote with $\G'(\ell,a):=\G(\ell,a+(\ell/2,\dots,\ell/2))$ the dual grid to $\G$, that is the grid consisting of the cubes whose vertices coincide with the centers of the cubes in $\G$.

For $h\in \{0,\dots,n+m\}$, we call $h$-cells of the grid $\G$ the $h$-dimensional faces of the cubes in $\G$, and for every $h$-cell $Q$ of $\G$, we denote by $Q'$ the unique $(n+m-h)$-cell of $\G'$ intersecting $Q$, and we call it the dual cell to $Q$ (notice that $Q\cap Q'$ consists of a single point that is the center of both $Q$ and $Q'$). Moreover, we denote by $R_h=R_h(\ell,a)$ the union of all the $h$-cells of $\G(\ell,a)$, which is called the $h$-skeleton of the grid $\G$. We denote by $R_h '$ the $h$-skeleton of $\G'$.

The following deformation lemma, which is a part of \cite[Lemma~3.8]{2005-Indiana-ABO}, is the main technical tool in the argument by Alberti, Baldo, and Orlandi.  

\begin{lemma}\label{lemma:deformation}
Let $\G=\G(\ell,a)$ be a grid and let $V\subset \R^{n+m}$ be a bounded open set. Let us set $U:=\{x\in \R^{n+m}: \dist(x,V)<2\ell\sqrt{n+m}\}$.

Then there exists a locally Lipschitz map $\Phi:\R^{n+m}\setminus R_{n-1}\to R_{m} '$, mapping every cube in $\G$ into itself, and such that the following statement holds.

For every smooth compact $m$-surface $M$ in $\R^{n+m}$ such that $\supp(M)\cap R_{n-1} = \partial M \cap U=\varnothing$, the push-forward $\Phi_\sharp M$ of the integral $m$-current $\lcurr M \rcurr$ is an integral polyhedral $m$-current supported on $R_m'$, that has no boundary in $V$, and such that
$$\F_V(\Phi_\sharp M - M)\leq C\ell^{m+1} \int_{M\cap U} \frac{d\mathcal{H}^{m}(x)}{\dist(x,R_{n-1})^{m}},$$
for some constant $C>0$, independent of $M,a,\ell$.

Moreover, for every $n$-cell $Q$ in $\G$ that is transversal to $M$, the multiplicity of $\Phi_\sharp M$ on the dual $m$-cell $Q'$ is constant and is equal (up to a sign) to the intersection number between $Q$ and $M$.
\end{lemma}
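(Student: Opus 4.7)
The plan is to construct $\Phi$ via the classical Federer--Fleming iterated central projection \cite{1960-FF}, applied to the dual grid $\G'$. Proceed in $n$ stages indexed by $k = 1, \ldots, n$. At stage $k$, on each $(n+m-k+1)$-face $F'$ of $\G'$, project radially from the center of $F'$; this center coincides with the midpoint of the dual $(k-1)$-cell of $\G$, hence lies in $R_{k-1}$. Composing all $n$ stages yields a locally Lipschitz map $\Phi : \R^{n+m} \setminus R_{n-1} \to R_m'$, since the union of projection centers over all stages is exactly $R_0 \cup \cdots \cup R_{n-1} = R_{n-1}$, and each stage drops the face dimension by one, so the final image lies in $R_m'$. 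The cube-preservation claim follows because every projection center lies in $R_{n-1} \subseteq R_{n+m-1}$, i.e.\ on the common boundary between $\G$-cubes, and therefore radial projection from such a center preserves each piece $F' \cap C$ for every $(n+m)$-cube $C$ of $\G$.

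The technical heart is the pointwise $m$-dimensional Jacobian bound
$$|J_m\Phi|(x) \le \frac{C\,\ell^m}{\dist(x,R_{n-1})^m} \qquad \text{for a.e.\ } x \in \R^{n+m},$$
obtained by composing the standard estimates for each radial projection: at every stage, projection from a center $c$ inflates the Jacobian by a factor $\sim \ell/\dist(x,c)$ in the single direction transversal to the target face, and restricting to an $m$-plane picks up exactly $m$ such factors out of the $n$ stages (the remaining $n-m$ stages act harmlessly on the tangent directions orthogonal to the $m$-plane). With this bound and the hypothesis $\supp(M)\cap R_{n-1}=\varnothing$, $\Phi|_M$ is locally Lipschitz, so $\Phi_\sharp \lcurr M\rcurr$ is a well-defined integer rectifiable $m$-current supported on the polyhedral $m$-complex $R_m'$, and is therefore integral polyhedral.

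For the flat-norm estimate, use the affine homotopy $H(t,x) := (1-t)x + t\Phi(x)$, which is locally Lipschitz on $[0,1]\times(\R^{n+m}\setminus R_{n-1})$ and preserves each $\G$-cube. Setting $N := H_\sharp(\lcurr(0,1)\rcurr \times \lcurr M\rcurr)$, the standard homotopy formula gives $\partial N = \Phi_\sharp\lcurr M\rcurr - \lcurr M\rcurr$ in $V$, since the contribution from $\partial M$ is supported outside $U \supseteq V$ by hypothesis; in particular $\Phi_\sharp M$ has no boundary in $V$. The $(m{+}1)$-Jacobian of $H$ along $[0,1]\times M$ is controlled by $|\Phi(x)-x|\cdot|J_m\Phi|(x) \le C\,\ell^{m+1}\dist(x,R_{n-1})^{-m}$, and integration via the area formula yields the claimed flat bound. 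Finally, the multiplicity statement on $Q'$ is a standard transversality computation: near each transverse intersection point $p \in M \cap Q$, in coordinates in which $Q$ and $Q'$ are orthogonal affine planes through $p$, $\Phi$ acts locally as a degree-$\pm 1$ retraction onto $Q'$, so the integer weight of $\Phi_\sharp M$ on $Q'$ equals the signed intersection number $\#(M \cap Q)$.

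The main obstacle is the Jacobian bookkeeping: verifying that the $n$ successive factors $\ell/\dist(\,\cdot\,,\text{center of stage }k)$ combine into the clean bound $\ell^m\,\dist(\,\cdot\,, R_{n-1})^{-m}$ requires a case analysis depending on which $\G'$-face contains the iterate at each stage, together with the geometric observation that $\dist(x,c_k) \gtrsim \dist(x,R_{n-1})$ whenever $c_k$ is the stage-$k$ center selected for the cell containing $x$.
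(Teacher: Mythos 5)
The paper does not prove this lemma; it is quoted verbatim as part of Lemma~3.8 in \cite{2005-Indiana-ABO}, so there is no in-paper proof to compare against. Your reconstruction via Federer--Fleming iterated radial projection onto the dual $m$-skeleton is the approach of the cited source, so the global strategy is the right one, and the construction of $\Phi$, the cube-preservation argument via convexity, and the multiplicity claim are all in the correct spirit.

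There is, however, a concrete error in the flat-norm step. You assert that the $(m+1)$-Jacobian of $H(t,x)=(1-t)x+t\Phi(x)$ on $[0,1]\times M$ is controlled by $|\Phi(x)-x|\cdot|J_m\Phi|(x)$. This is not true for a general Lipschitz $\Phi$. The factor that actually appears is $\|\Lambda^m((1-t)\mathrm{Id}+tD\Phi(x))|_{T_xM}\|$, and expanding $\Lambda^m$ of the sum produces cross terms carrying the lower-order minors of $D\Phi$, which are \emph{not} dominated by the top minor $|J_m\Phi|$: for instance if $D\Phi$ has singular values $K,\epsilon,0,\dots,0$ then $|J_2\Phi|=K\epsilon$ while $\|\Lambda^2((1-t)\mathrm{Id}+tD\Phi)\|\sim K$, a gap of order $1/\epsilon$. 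What one actually needs, and what the iterated projection does satisfy (this should be verified stage by stage), is a pointwise bound on the full operator norm $\|D\Phi(x)\|\le C\ell/\dist(x,R_{n-1})$. Combined with $|\Phi(x)-x|\le C\ell$ (from cube preservation) and the trivial $\dist(x,R_{n-1})\le C\ell$, this gives $|J_{m+1}H|\le C\ell^{m+1}\dist(x,R_{n-1})^{-m}$ and recovers the target inequality. So the estimate you are aiming for is correct, but the intermediate bound as you wrote it is false and must be replaced by the operator-norm estimate. Relatedly, you defer the pointwise Jacobian/operator-norm bound as ``the main obstacle''; since that stage-wise bookkeeping is precisely what carries the whole lemma, the argument as it stands is an outline rather than a proof.
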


\begin{rem}\label{rem:Jacobian_intersection}
We point out that in the case in which $ \lcurr M \rcurr=\star Ju/\gamma_n$ for some map $u:\Omega\to \Sp^{n-1}$ that is smooth outside $M$, then the intersection number between $M$ and a transversal $n$-cell $Q$ is equal to $\star Ju_Q(Q)/\gamma_n$, where $u_Q$ denotes the restriction of $u$ to $Q$, whose Jacobian is an atomic measure on $Q$, supported on $M\cap Q$.
\end{rem}

Finally, we need the following two results, which are, respectively, minor modifications of \cite[Lemma~8.3]{2005-Indiana-ABO} and \cite[Lemma~8.4]{2005-Indiana-ABO}.

\begin{lemma}\label{lemma:int-dist-surf}
Let $S\subset \R^{d}$ be a bounded set that is contained in a finite union of Lipschitz surfaces with codimension $h$, and let us set $S_t:=\{x\in \R^d:\dist(x,S)<t\}$.

Then there exists a constant $C>0$, depending on $S$, such that for every $p<h$ and every $t>0$ it holds that
$$\int_{S_t} \frac{dx}{\dist(x,S)^p} \leq \frac{C}{h-p}t^{h-p}.$$
\end{lemma}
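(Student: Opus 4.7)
The plan is to combine the classical Minkowski-content bound for Lipschitz surfaces with a layer-cake / Fubini computation.

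As a first step, I would establish the volume estimate $|S_s|\le C_0 s^h$ for all $s\in (0,t_0]$, where $C_0$ and $t_0$ depend only on $S$. Since $S$ is contained in a finite union of compact Lipschitz $(d-h)$-surfaces, a finite covering with bounded overlap reduces this to the corresponding estimate for a single Lipschitz graph, which is elementary: the $s$-tube around the graph of an $L$-Lipschitz function $\R^{d-h}\to\R^{h}$ is contained in the image, under a bi-Lipschitz map, of the product of a bounded open subset of $\R^{d-h}$ with a Euclidean $h$-ball of radius a constant multiple of $s$.

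Assuming $p\in (0,h)$ — the case $p\le 0$ being trivial since then the integrand is bounded by $t^{|p|}$ on $S_t$ — I would use the layer-cake formula together with the change of variable $\lambda=s^{-p}$ to derive the identity
\begin{equation*}
\int_{S_t}\dist(x,S)^{-p}\,dx=p\int_0^t |S_s|\,s^{-p-1}\,ds+|S_t|\,t^{-p}.
\end{equation*}
For $t\le t_0$, inserting the bound $|S_s|\le C_0 s^h$ and performing the resulting elementary integral yields the announced estimate with constant of the form $C_0 h$, which depends only on $S$.

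For $t>t_0$ one splits $S_t$ into $S_{t_0}$ and its complement: on $S_{t_0}$ the previous step applies at scale $t_0$, while on $S_t\setminus S_{t_0}$ the integrand is uniformly bounded by $t_0^{-p}$ and $|S_t|$ is controlled polynomially in $t$ by the boundedness of $S$; both contributions are absorbed into a suitably enlarged constant depending only on $S$. The only nontrivial ingredient is the Minkowski estimate for Lipschitz surfaces; the rest is a routine Fubini manipulation, and I do not expect any substantial obstacle.
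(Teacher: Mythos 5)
The paper does not actually prove this lemma: it is cited as a minor modification of Lemma~8.3 in \cite{2005-Indiana-ABO}, so there is no in-paper proof to compare against. Your overall strategy, namely the Minkowski tube bound $|S_s|\le C_0 s^h$ for $s\le t_0(S)$, followed by the layer-cake identity and a direct integration, is the natural one and is correct in the regime that the paper actually uses: $p\in(0,h)$ and $t$ bounded by a constant depending on $S$. In that range your computation yields precisely $C_0 h\,t^{h-p}/(h-p)$.

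Two of your reductions, however, do not go through. For $t>t_0$, the term $t_0^{-p}|S_t|$ grows like $t^d$ and cannot be absorbed into $C\,t^{h-p}/(h-p)$ with $C$ independent of $t$, since $h-p<d$ whenever $p>0$; in fact, taking $S$ a segment in $\R^2$ (so $h=1<2=d$) and $p=1/2$, the left-hand side grows like $t^{3/2}$ while the claimed bound grows like $t^{1/2}$, so the literal ``for every $t>0$'' clause cannot hold whenever $h<d$, and the statement must be read with $t$ implicitly bounded. For $p\le 0$, the pointwise bound $\dist(\cdot,S)^{-p}\le t^{-p}$ combined with $|S_t|\le C_0 t^h$ only gives $C_0 t^{h-p}$, which misses the factor $1/(h-p)$; as $p\to -\infty$ this factor tends to zero, so the case is not trivial and requires a different argument (for instance the coarea formula together with a bound of order $r^{h-1}$ on the $(d-1)$-measure of the level sets $\{\dist(\cdot,S)=r\}$). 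Neither issue affects the paper, which invokes the lemma only for $p>0$ and $t$ bounded.
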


\begin{lemma}\label{lemma:elem}
Let $(X,\mathfrak{F},P)$ be a probability space and let $f_0,f_1,\dots,f_h:X\to [0,+\infty)$ be nonnegative measurable functions. For every $\delta>0$, let $A_\delta$ be the set of points $a\in X$ such that
$$f_0(a)\leq (1+\delta)\int_{X} f_0\,dP \qquad\text{and}\qquad f_i(a)\leq \frac{(1+\delta)2h}{\delta} \int_{X} f_i \,dP \quad \forall i\in\{1,\dots,N\}.$$
Then $P(A_\delta)\geq \delta/(2+2\delta)$.
\end{lemma}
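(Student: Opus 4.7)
The proof is a straightforward application of Markov's inequality together with a union bound, so the plan is essentially a matter of bookkeeping. I assume the index in the statement should be $i \in \{1,\dots,h\}$ (the functions are indexed $f_0, f_1, \dots, f_h$), and proceed accordingly.

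First, I would apply Markov's inequality to $f_0$: the set
$$B_0 := \Bigl\{ a \in X \st f_0(a) > (1+\delta)\int_X f_0 \, dP \Bigr\}$$
has probability $P(B_0) \leq 1/(1+\delta)$. Then, for each $i \in \{1,\dots,h\}$, I apply Markov's inequality to $f_i$ with threshold $c_i := (1+\delta)2h/\delta$ times its mean, which gives
$$P(B_i) \leq \frac{\delta}{2h(1+\delta)}, \qquad B_i := \Bigl\{ a \st f_i(a) > c_i \int_X f_i \, dP \Bigr\}.$$

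By a union bound over the $h+1$ sets $B_0, B_1, \dots, B_h$, the complement of $A_\delta$ satisfies
$$P(X \setminus A_\delta) \;\leq\; P(B_0) + \sum_{i=1}^h P(B_i) \;\leq\; \frac{1}{1+\delta} + h \cdot \frac{\delta}{2h(1+\delta)} \;=\; \frac{2+\delta}{2(1+\delta)}.$$
Taking the complement yields
$$P(A_\delta) \;\geq\; 1 - \frac{2+\delta}{2+2\delta} \;=\; \frac{\delta}{2+2\delta},$$
which is the claim. The factor $2h$ in the hypothesis on $f_i$ is exactly what makes the contributions of $B_1, \dots, B_h$ add up to $\delta/(2(1+\delta))$, matching the $1/(1+\delta)$ coming from $f_0$ so that the total loss is $(2+\delta)/(2+2\delta)$.

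There is no real obstacle here; the only mildly subtle point is noticing that one does not require independence of the $f_i$, just the trivial union bound, and that the constants in the statement are tuned precisely so that the bounds from $f_0$ and from the collection $\{f_i\}_{i\geq 1}$ contribute comparably to the exceptional set.
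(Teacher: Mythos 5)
Your proof is correct. The paper does not actually prove this lemma; it simply remarks that it is a minor modification of Lemma~8.4 in \cite{2005-Indiana-ABO}, and the argument there is exactly the one you give (Markov's inequality applied to each $f_i$, followed by a union bound over the $h+1$ exceptional sets). You also correctly identified the typo in the statement: the index range ``$i\in\{1,\dots,N\}$'' should read ``$i\in\{1,\dots,h\}$.''
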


The following proposition, which is analogue to \cite[Proposition~3.1]{2005-Indiana-ABO}, is essentially a localized version of the compactness and $\Gamma$-liminf part of Theorem~\ref{main_res}, for regular maps.

\begin{prop}\label{prop:cpt-liminf}
Let $\Omega_1\Subset \Omega_2 \subset \R^{n+m}$ be two bounded open sets with Lipschitz boundary. Let $\{p_k\}\subset (n-1,n)$ be a sequence of real numbers such that $p_k\to n^-$ and, for every $k\in \N$, let $\Sigma_k\subset \Omega_2$ be a smooth oriented $m$-submanifold without boundary and $u_k\in W^{1,p_k}(\Omega_2;\Sp^{n-1})\cap C^\infty(\Omega_2\setminus \Sigma_k;\Sp^{n-1})$ be a map with $\star Ju_k=\gamma_n \lcurr \Sigma_k \rcurr$. Let us assume that
\begin{equation}\label{hp:energy-bound-liminf}
\limsup_{k\to +\infty}\, (n-p_k)\int_{\Omega_2} |\nabla u_k|^{p_k} \,dx <+\infty,
\end{equation}
and that there exists $N>0$ for which
\begin{equation}\label{hp:mass_bound}
\limsup_{k\to +\infty} \frac{\M_{\Omega_2}(\star Ju_k)}{(n-p_k)^N}<+\infty.
\end{equation}

Then there exists a subsequence (not relabeled) and an integral $m$-boundary $\Sigma$ such that $\star Ju_k/\gamma_n \to \Sigma$ in the flat topology of $\Omega_1$, and, for every simple unit $m$-covector $\eta$ it holds that
\begin{equation}\label{eq:liminf_local}
\liminf_{k\to +\infty}\, (n-p_k)\int_{\Omega_2} |\nabla u_k|^{p_k} \,dx \geq (n-1)^{\frac{n}{2}}\omega_{n-1}|\Sigma \mres \eta|(\Omega_1),
\end{equation}
where $\Sigma \mres \eta$ is the $0$-current defined by $\langle\Sigma \mres \eta,\psi\rangle :=\langle \Sigma,\psi\eta\rangle$ for every $\psi \in \D^0 (\Omega_1)$.
\end{prop}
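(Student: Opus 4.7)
The plan is to follow the deformation-onto-grids strategy of \cite{2005-Indiana-ABO}, replacing the Jerrard--Sandier degree estimate by our zero-dimensional Theorem~\ref{teo:X_M+X_F}. I first extract a flatly convergent subsequence of the Jacobians, then prove the liminf bound for each covector $\eta$ separately by aligning the grid with $\eta$. Fix $\ell>0$ and a shift $a\in[0,\ell]^{n+m}$; let $\Phi$ be the deformation of Lemma~\ref{lemma:deformation} for $\G(\ell,a)$, and set $T_k:=T_k^{\ell,a}:=\Phi_\sharp(\star Ju_k/\gamma_n)$. For generic $a$, $\Sigma_k$ is transversal to the $n$-skeleton $R_n$, so $T_k$ is an integer polyhedral $m$-current with no boundary in $\Omega_1$, whose multiplicity on each dual $m$-cell $Q'$ equals the integer $\mu_Q:=\star J(u_k|_Q)(Q)/\gamma_n$ by Remark~\ref{rem:Jacobian_intersection}.

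Fix now a simple unit $m$-covector $\eta$; after a rotation of coordinates I may assume $\eta=dx^{n+1}\wedge\cdots\wedge dx^{n+m}$ and align the grid with the new coordinates. The $m$-cells on which $\eta$ acts nontrivially are exactly those of unit covector $\pm\eta$, with duals the $n$-cells parallel to $\R^n\times\{0\}^m$; denote the union of such $n$-cells by $R_n^\eta(a)$. For each such $Q$ with $Q'$ meeting $\Omega_1$, the restriction $u_k|_Q$ is a $W^{1,p_k}$ sphere-valued map on an $n$-cell, smooth outside finitely many points, so Theorem~\ref{teo:X_M+X_F} yields a decomposition $\star J(u_k|_Q)/\gamma_n=X_k^Q+\partial S_k^Q$ with $|\mu_Q|\leq\M_Q(X_k^Q)$, and the sharp bound~\eqref{th:mass_sharp} gives
\begin{equation*}
(n-1)^{p_k/2}\omega_{n-1}\bigl(\alpha\,\M_Q(X_k^Q)-o(1)\bigr)\leq\bigl(1+o(1)\bigr)(n-p_k)\int_Q|\nabla u_k|^{p_k}\,d\mathcal{H}^n,
\end{equation*}
with error terms uniform in $Q$: the polynomial mass bound~\eqref{hp:mass_bound} forces the logarithmic factor in~\eqref{th:mass_sharp} to stay bounded by a multiple of $N$. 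Multiplying by $\ell^m$ and summing over all good $Q$ gives, in the single direction $\eta$,
\begin{equation*}
(n-1)^{p_k/2}\omega_{n-1}\alpha\,|T_k\mres\eta|(\Omega_1)\leq(1+o(1))(n-p_k)\ell^m\!\!\int_{R_n^\eta(a)\cap U}\!\!|\nabla u_k|^{p_k}\,d\mathcal{H}^n+o(1).
\end{equation*}

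The averaging step selects a favorable shift. Fubini yields
\begin{equation*}
\frac{1}{\ell^{n+m}}\int_{[0,\ell]^{n+m}}\ell^m\!\!\int_{R_n^\eta(a)\cap U}\!\!|\nabla u_k|^{p_k}\,d\mathcal{H}^n\,da=\int_U|\nabla u_k|^{p_k}\,dx,
\end{equation*}
while Lemma~\ref{lemma:int-dist-surf} applied to the codimension-$m$ set $R_n$ bounds the expectation of the deformation error $\ell^{m+1}\int_{\Sigma_k\cap U}\dist(\cdot,R_{n-1})^{-m}d\mathcal{H}^m$ from Lemma~\ref{lemma:deformation} by $C\ell\,\M(\Sigma_k)$. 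Applying Lemma~\ref{lemma:elem} to these two quantities, together with the analogous energy averages in each of the other $\binom{n+m}{m}-1$ coordinate $m$-directions (to also control $\M_{\Omega_1}(T_k)$), one finds a single shift $a=a_{k,\ell}$ for which all of them are within a constant factor of their averages, and the main energy is within a factor $(1+o(1))$ of its average.

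Finally, choose $\ell=\ell_k\to 0$ slowly enough that $\ell_k\,\M_{\Omega_2}(\star Ju_k)\to 0$, which is possible by~\eqref{hp:mass_bound}. Then $\F_{\Omega_1}(T_k-\star Ju_k/\gamma_n)\to 0$, while $\M_{\Omega_1}(T_k)$ stays uniformly bounded; by a standard flat compactness argument for integer polyhedral boundaries, a subsequence of $\{T_k\}$ converges flatly to an integral $m$-boundary $\Sigma$ in $\Omega_1$, and hence $\star Ju_k/\gamma_n\to\Sigma$ flatly as well. Since $\|\eta\|=1$, $T_k\mres\eta\to\Sigma\mres\eta$ in flat topology of $0$-currents, and lower semicontinuity of mass gives $|\Sigma\mres\eta|(\Omega_1)\leq\liminf_k|T_k\mres\eta|(\Omega_1)$; combining this with the cell-wise estimate and letting $\alpha\to 1^-$ yields~\eqref{eq:liminf_local}. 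The main obstacle is the coupled diagonal selection of $\ell_k$ and $a_k$: one needs $\ell_k$ small enough that the deformation error vanishes even though $\M(\Sigma_k)$ may blow up like $(n-p_k)^{-N}$, but large enough that the averaged energy on $R_n^\eta(a_k)$ exceeds the total energy by at most a factor $(1+o(1))$; the polynomial bound~\eqref{hp:mass_bound} is exactly what decouples these two requirements.
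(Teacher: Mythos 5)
Your strategy matches the paper's at the level of the overall architecture (approximate, deform onto a dual grid, average over shifts, apply the zero-dimensional estimate, pass to the limit via Federer--Fleming), but there is a genuine gap in the step where you claim that applying Theorem~\ref{teo:X_M+X_F} to the restriction $u_k|_Q$ on a single $n$-cell $Q$ yields $|\mu_Q|\le\M_Q(X_k^Q)$. The decomposition $\star J(u_k|_Q)/\gamma_n=X_k^Q+\partial S_k^Q$ gives $\mu_Q=X_k^Q(Q)+\partial S_k^Q(Q)$, and the current $S_k^Q$ is constructed by pairing atoms with each other \emph{or with $\partial Q$}; a segment with one endpoint in the interior of $Q$ and the other on $\partial Q$ contributes a nonzero atom to $\partial S_k^Q\mres Q$. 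Thus $\partial S_k^Q(Q)$ need not vanish and is not controlled by $\M(S_k^Q)$ (which bounds total \emph{length}, not the number of boundary-touching segments), so $|\mu_Q|$ can strictly exceed $\M_Q(X_k^Q)$: concretely, a cloud of positive atoms clustered near $\partial Q$ would all be paired to $\partial Q$, leaving $X_k^Q=0$ while $\mu_Q$ is large.

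The paper resolves exactly this obstacle by applying Theorem~\ref{teo:X_M+X_F} not on each cell but on the whole $n$-plane slice $\Omega_P=\Omega_2\cap P$, obtaining $\star Ju_{k,P}/\gamma_n=X_{k,P}+\partial S_{k,P}$. The key observation (estimate \eqref{est:whenever}) is that \emph{if} the support of $S_{k,P}$ is disjoint from the $(n-1)$-skeleton $R_{n-1}(\ell_k,a)$, then each segment of $S_{k,P}$ lies entirely inside or entirely outside each $n$-cell $Q$, so $\partial S_{k,P}(Q)=0$ and $\sum_Q|\mu_Q|\le\M_{\Omega_P}(X_{k,P})$. The set of translations $b$ of the grid in the plane directions for which a segment of $S_{k,P}$ hits $R_{n-1}$ has measure at most $\sqrt{n}\,\M(S_{k,P})$, which is $o(\ell_k^n)$ by \eqref{th:flat} precisely because of the factor $\alpha^{1/(2(n-p_k))}$. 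This additional selection of good shifts (the passage from $A_k$ to $A_k'$ in the paper) is essential, is coupled to the deformation map via $R_{n-1}$, and is entirely absent from your argument. Without it, the cell-wise inequality you write down is false, and the downstream summation does not go through.

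Two smaller remarks: (1) you need the averaged control on $\mathcal{H}^0(\Sigma_k\cap R_n(\ell_k,a))$ (the analogue of \eqref{est:grid4}) before you can invoke the logarithmic term in \eqref{th:mass_sharp} with a bound uniform in $Q$; the paper derives this from a slicing formula, and you gesture at it but do not state it; (2) Lemma~\ref{lemma:int-dist-surf} must be applied to $R_{n-1}$ (codimension $n+1$ in $\R^{n+m}$), not to $R_n$ as you write, since the deformation error from Lemma~\ref{lemma:deformation} involves $\dist(\cdot,R_{n-1})^{-m}$ and one needs $m<n+1$ to integrate.
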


\begin{proof}


We can assume without loss of generality that $\eta=e_{n+1}\wedge\dots\wedge e_{n+m}$ and, up to passing to a subsequence, that the liminf in \eqref{eq:liminf_local} is a limit, so it does not change if we extract further subsequences.

Let us fix two real numbers $\alpha\in (\sqrt{3}/2,1)$ and $\delta\in (0,1)$, and let us set
$$\ell_k:=(n-p_k)^{N+1}  \qquad \forall k \in\N.$$

We claim that, for every $k\in\N$, there exists a set $A_k\subset [0,\ell_k]^{n+m}$ with measure larger than or equal to $\delta\ell_k^{n+m}/(2+2\delta)$ such that for every $a \in A_k$ it holds that $\Sigma_k$ is transversal to $R_n(\ell_k,a)$, disjoint from $R_{n-1}(\ell_k,a)$, and
\begin{align}
\ell_k ^{m}\int_{\widetilde{R}_{n}(\ell_k,a)\cap\Omega_2} |\nabla u_k|^{p_k}\,dx
&\leq (1+\delta)\int_{\Omega_2} |\nabla u_k|^{p_k}\,dx,\label{est:grid1}\\
\ell_k ^{m}\int_{R_{n}(\ell_k,a)\cap\Omega_2} |\nabla u_k|^{p_k}\,dx
&\leq \frac{C}{\delta}\int_{\Omega_2} |\nabla u_k|^{p_k}\,dx,\label{est:grid2}\\
\ell_k ^{m}\int_{\Sigma_k\cap \Omega_2} \frac{d\mathcal{H}^{m}(x)}{\dist(x,R_{n-1}(\ell_k,a))^{m}}
&\leq \frac{C}{\delta}\M_{\Omega_2}(\star Ju_k),\label{est:grid3}\\
\ell_k ^{m} \mathcal{H}^0(\Sigma_k\cap R_{n}(\ell_k,a)\cap\Omega_2)
&\leq \frac{C}{\delta} \M_{\Omega_2}(\star Ju_k),\label{est:grid4}
\end{align}
where $C>0$ is a constant depending only on $n$ and $m$, while $\widetilde{R}_{n}(\ell_k,a_k)$ is the union of all $n$-cells in $R_n(\ell_k,a)$ that are parallel to the $n$-plane spanned by $\{e_{1},\dots,e_{n}\}$.

Since the transversality and the disjointness conditions hold for almost every $a\in [0,\ell_k]^{n+m}$, the claim is a consequence of Lemma~\ref{lemma:elem} provided we check that
\begin{align}
\int_{[0,\ell_k]^{n+m}}\frac{da}{\ell_k ^n} \int_{\widetilde{R}_{n}(\ell_k,a)\cap\Omega_2} |\nabla u_k|^{p_k}\,dx
&\leq \int_{\Omega_2} |\nabla u_k|^{p_k}\,dx,\label{est:average1}\\
\int_{[0,\ell_k]^{n+m}}\frac{da}{\ell_k ^n} \int_{R_{n}(\ell_k,a)\cap\Omega_2} |\nabla u_k|^{p_k}\,dx
&\leq C \int_{\Omega_2} |\nabla u_k|^{p_k}\,dx,\label{est:average2}\\
\int_{[0,\ell_k]^{n+m}}\frac{da}{\ell_k ^n} \int_{\Sigma_k\cap \Omega_2} \frac{d\mathcal{H}^{m}(x)}{\dist(x,R_{n-1}(\ell_k,a))^{m}}
&\leq C\cdot \M_{\Omega_2}(\star Ju_k),\label{est:average3}\\
\int_{[0,\ell_k]^{n+m}}\frac{da}{\ell_k ^n} \,\mathcal{H}^0(\Sigma_k\cap R_{n}(\ell_k,a_k)\cap\Omega_2)
&\leq C\cdot \M_{\Omega_2}(\star Ju_k)\label{est:average4},
\end{align}
for some positive constant $C>0$ depending only on $n$ and $m$.

In order to prove these estimates, we first observe that for every non-negative measurable function $f:\Omega_2\to [0,+\infty)$ it holds that
\begin{equation}\label{eq:fubini}
\int_{[0,\ell_k]^{n+m}}\frac{da}{\ell_k ^n} \int_{\widetilde{R}_{n}(\ell_k,a)\cap\Omega_2} f(x)\,dx = \int_{[0,\ell_k]^{m}} da' \int_{\widetilde{R}_{n}(\ell_k,(0,\dots,0,a'_{1},\dots,a'_{m}))\cap\Omega_2} f(x)\,dx = \int_{\Omega_2} f(x)\,dx,
\end{equation}
where the first equality follows by the fact that $\widetilde{R}_{n}(\ell_k,a)$ does not depend on the first $n$ coordinates of $a$, and the second equality is Fubini's theorem.

Of course the same holds if we replace $\widetilde{R}_{n}(\ell_k,a)$ with the union of $n$-cells that are parallel to any other $n$-plane spanned by a set of $n$ vectors of the canonical basis of $\R^{n+m}$. Therefore we also have that
\begin{equation}\label{eq:binom}
\int_{[0,\ell_k]^{n+m}}\frac{da}{\ell_k ^n} \int_{R_{n}(\ell_k,a)\cap\Omega_2} f(x)\,dx = \binom{n+m}{n} \int_{\Omega_2} f(x)\,dx,
\end{equation}
and these two identities are enough to establish both \eqref{est:average1} and \eqref{est:average2}.

As for \eqref{est:average3}, we observe that
$$\int_{[0,\ell_k]^{n+m}}\frac{da}{\ell_k ^n} \int_{\Sigma_k\cap \Omega_2} \frac{d\mathcal{H}^{m}(x)}{\dist(x,R_{n-1}(\ell_k,a))^{m}}= \int_{\Sigma_k\cap \Omega_2}d\mathcal{H}^{m}(x)\int_{[0,\ell_k]^{n+m}}\frac{da}{\ell_k ^n \dist(x,R_{n-1}(\ell_k,a))^{m}},$$
and, for every $x\in \R^{n+m}$, it holds that
\begin{multline*}
\int_{[0,\ell_k]^{n+m}}\frac{da}{\ell_k ^n \dist(x,R_{n-1}(\ell_k,a))^{m}}=\int_{[0,\ell_k]^{n+m}}\frac{da}{\ell_k ^n \dist(0,R_{n-1}(\ell_k,a))^{m}}\\
=\int_{[0,1]^{n+m}}\frac{\ell_k ^m\,da}{ \dist(0,R_{n-1}(\ell_k,\ell_k a))^{m}}=\int_{[0,1]^{n+m}}\frac{da}{ \dist(0,R_{n-1}(1,a))^{m}},
\end{multline*}
and, thanks to Lemma~\ref{lemma:int-dist-surf}, the last integral is finite and clearly depends only on $n$ and $m$.

As a consequence, we have that
$$\int_{[0,\ell_k]^{n+m}}\frac{da}{\ell_k ^n} \int_{\Sigma_k\cap \Omega_2} \frac{d\mathcal{H}^{m}(x)}{\dist(x,R_{n-1}(\ell_k,a))^{m}}=\biggl(\int_{[0,1]^{n+m}}\frac{da}{ \dist(0,R_{n-1}(1,a))^{m}}\biggr) \mathcal{H}^m(\Sigma_k\cap \Omega_2),$$
which coincide, up to a dimensional constant, with $\M_{\Omega_2}(\star Ju_k)$, thus establishing \eqref{est:average3}.

Finally, we exploit the following slicing formula (see \cite[Theorem~3.2.22]{Federer-book})
$$\int_{\R^{m}} \mathcal{H}^0(\Sigma\cap(\R^{n}\times\{z\})\cap \Omega_2)\,dz \leq \mathcal{H}^m(\Sigma\cap \Omega_2),$$
which holds for every $m$-submanifold $\Sigma$, and hence, with a suitable adaptation of \eqref{eq:fubini} and \eqref{eq:binom} we obtain that
$$\int_{[0,\ell_k]^{n+m}}\frac{da}{\ell_k ^n} \,\mathcal{H}^0(\Sigma_k\cap R_{n}(\ell_k,a_k)\cap\Omega_2)\leq \binom{n+m}{n} \mathcal{H}^{m}(\Sigma_k\cap\Omega_2),$$
which yields \eqref{est:average4}.

\smallskip 

Now let us denote with $A_k '$ the set of all $a\in A_k$ such that
\begin{equation}
\frac{\alpha(n-1)^{\frac{p_k}{2}}\omega_{n-1}}{(1+\delta)^2}\sum_{Q\subset \widetilde{R}_n(\ell_k,a)\cap \Omega_2} \abs{\star J(u_k)_Q (Q)/\gamma_n}
\leq (n-p_k)\int_{\widetilde{R}_{n}(\ell_k,a)\cap\Omega_2} |\nabla u_k|^{p_k}\,dx, \label{est:mass_parallel}
\end{equation}
where the sum is intended to be over all $n$-cells $Q$ of $\G(\ell_k,a)$ that are contained in $\widetilde{R}_n(\ell_k,a)\cap \Omega_2$, and $\star J(u_k)_Q$ is the Jacobian of the restriction of $u_k$ to $Q$ (see Remark~\ref{rem:Jacobian_intersection}).

We claim that $|A_k'|\geq |A_k|-o(\ell_k ^{n+m})$, and in particular $A_k'$ has positive measure (and hence is not empty) when $k$ is large enough.


To prove this, we write every $a\in [0,\ell_k]^{n+m}$ as $a=(b,c)\in [0,\ell_k]^n\times [0,\ell_k]^m$, and for every $c\in [0,\ell_k]^m$, we set $B_{k}(c):=\{b\in [0,\ell_k]^{n}:(b,c)\in A_k 
\}$.
We also recall that the set $\widetilde{R}_n(\ell_k,a)=\widetilde{R}_n(\ell_k,(b,c))$, which is a union of $n$-planes parallel to $\operatorname{span}(e_1,\dots,e_n)$, depends only on $c$ and not on $b$, so we will denote it also with $\widetilde{R}_n(\ell_k,(\cdot,c))$, to emphasize the fact that this set is well-defined even if we do not fix $b$, namely the first component of $a$.

Now let us fix $c\in [0,\ell_k]^m$ such that $B_k(c)\neq \varnothing$, and let $P$ be an $n$-plane contained in $\widetilde{R}_n(\ell_k,(\cdot,c))$ and intersecting $\Omega_2$.
Let us set $\Omega_P:=\Omega_2\cap P$ and let $u_{k,P}$ denote the restriction of $u_k$ to $\Omega_P$. We point out that, since $B_k(c)\neq \varnothing$, fixing any $b\in B_k(c)$ we deduce that $P$ is transversal to $\Sigma_k$ and from \eqref{est:grid4} and \eqref{hp:mass_bound} we find that
\begin{equation}\label{est:mass_Jac_kP}
\M_{\Omega_P}(\star Ju_{k,P}) =\gamma_n \mathcal{H}^0(\Sigma_k\cap \Omega_P)\leq \frac{C}{\delta}\ell_k^{-m} \M_{\Omega_2}(\star Ju_k) \leq \frac{C}{\delta}\ell_k^{-m} (n-p_k)^{-N}
\end{equation}

Therefore, by Theorem~\ref{teo:X_M+X_F} there exist an integral $0$-current $X_{k,P}$ and an integral $1$-current $S_{k,P}$, made of a sum of segments, such that $\star Ju_{k,P}/\gamma_n = X_{k,P}+\partial S_{k,P}$ and \eqref{th:flat} yields
\begin{equation}\label{est:mass_SkP}
\M_{\Omega_P}(S_{k,P})\leq C_\alpha \alpha^{\frac{1}{2(n-p_k)}}(n-p_k) \int_{\Omega_P}|\nabla u_{k,P}(x)|^{p_k}\,dx\leq C_\alpha \alpha^{\frac{1}{2(n-p_k)}}(n-p_k) \int_{\Omega_P}|\nabla u_{k}(x)|^{p_k}\,dx,
\end{equation}
where $C_\alpha>0$ depends on $n$ and $\alpha$,
while, combining \eqref{est:mass_Jac_kP} and \eqref{th:mass_sharp}, we get
\begin{multline*}
\biggl(1+C_{\alpha}\frac{2^{n-p_k}-1}{\sqrt{n-p_k}}\biggl(1+ 2\frac{\log(C\delta^{-1}(n-p_k)^{-N}\ell_k^{-m})}{\abs{\log(n-p_k)}}\biggr) \biggr)(n-p_k) \int_{\Omega_P}|\nabla u_{k}(x)|^{p_k}\,dx\\
\geq (n-1)^{\frac{p_k}{2}}\omega_{n-1}\biggl(\alpha\M_{\Omega_P}(X_{k,P})-\frac{p_k(2^{n-p_k}-1)}{n-1}\biggr).
\end{multline*}

In particular, if $k$ is large enough (how large depends on $\delta,\alpha,N,n,m$) we have that
$$ (n-1)^{\frac{p_k}{2}}\omega_{n-1}\biggl(\alpha\M_{\Omega_P}(X_{k,P})-\frac{p_k(2^{n-p_k}-1)}{n-1}\biggr) \leq (1+\delta)(n-p_k) \int_{\Omega_P}|\nabla u_{k}(x)|^{p_k}\,dx,$$
and hence also
\begin{equation}\label{est:mass_Xkp}
\alpha (n-1)^{\frac{p_k}{2}}\omega_{n-1}\M_{\Omega_P}(X_{k,P})\leq (1+\delta)^2(n-p_k) \int_{\Omega_P}|\nabla u_{k}(x)|^{p_k}\,dx,
\end{equation}
since this inequality is trivial when $\M_{\Omega_P}(X_{k,P})=0$, while if $\M_{\Omega_P}(X_{k,P})\geq 1$ it follows from the previous one provided that $p_k(2^{n-p_k}-1)/(n-1)\leq \alpha \delta/(1+\delta)$, which is true when $k$ is large enough.

Now we observe that if $\supp S_{k,P} \cap R_{n-1}(\ell_k,a)=\varnothing$, then for every $n$-cell $Q$ in $\G(\ell_k,a)$ that is contained in $P$ we have that $\abs{\star J(u_k)_Q (Q)/\gamma_n}=|X_{k,P}(Q)|$ because each segment composing $S$ does not intersect $\partial Q\subset R_{n-1}(\ell_k,a)$, hence it is either contained in $Q$ or disjoint from $Q$, and in both cases $\partial S_{k,P}(Q)=0$. As a consequence, we have that
\begin{equation}\label{est:whenever}
\supp S_{k,P} \cap R_{n-1}(\ell_k,a)=\varnothing
\quad\Longrightarrow\quad\sum_{Q\subset \widetilde{R}_n(\ell_k,a)\cap P}\abs{\star J(u_k)_Q (Q)/\gamma_n} \leq \M_{\Omega_P}(X_{k,P}).
\end{equation}

Therefore, if $\supp S_{k,P} \cap R_{n-1}(\ell_k,a)=\varnothing$ for every $n$-plane $P$ contained in $\widetilde{R}_n(\ell_k,a)$ and intersecting $\Omega_2$, then combining \eqref{est:mass_Xkp} and \eqref{est:whenever} we obtain exactly \eqref{est:mass_parallel}, namely $a\in A_k '$.

This means that \eqref{est:mass_parallel} may fail for some $a\in A_k$ only if $\supp S_{k,P} \cap R_{n-1}(\ell_k,a)\neq \varnothing$ for some $n$-plane $P$ contained in $\widetilde{R}_n(\ell_k,a)$ and intersecting $\Omega_2$.

Now we remark that, while $\widetilde{R}_n(\ell_k,(b,c))$ depends only on $c$, the set $R_{n-1}(\ell_k,(b,c))\cap P$ do depend also on $b$, and actually the measure of the set of $b\in [0,\ell_k]^n$ for which $\supp S_{k,P} \cap R_{n-1}(\ell_k,(b,c))\neq \varnothing$ is less than or equal to $\sqrt{n}\cdot \M_{\Omega_P}(S_{k,P})$. This can be easily seen when $\supp S_{k,p}$ is a single segment, and then this estimate follows just by summing over all segments composing $S_{k,p}$.

As a consequence, recalling \eqref{est:mass_SkP}, we obtain that the measure of the set of $b\in B_{k}(c)$ for which $\supp S_{k,P} \cap R_{n-1}(\ell_k,a)\neq \varnothing$ for some $n$-plane $P$ contained in $\widetilde{R}_n(\ell_k,a)$ and intersecting $\Omega_2$ (so that \eqref{est:mass_parallel} might fail) is less than or equal to
$$\sqrt{n} C_\alpha \alpha^{\frac{1}{2(n-p_k)}}(n-p_k) \int_{\widetilde{R}_{n}(\ell_k,a)\cap \Omega_2} |\nabla u_{k,P}(x)|^{p_k}\,dx,$$
which, thanks to \eqref{est:grid1}, is bounded by $C \alpha^{\frac{1}{2(n-p_k)}}\ell_k ^{-m}$, where now the constant $C$ depends also on the bound \eqref{hp:energy-bound-liminf}. Thus, by Fubini's theorem, we obtain that
\begin{equation}\label{est:meas_A-A'}
\Leb^{n+m}(A_k\setminus A_k') \leq C \alpha^{\frac{1}{2(n-p_k)}}=o(\ell_k^{n+m}).
\end{equation}

In a similar way, we obtain that, for sufficiently large $k$, there is a subset $A_k''\subset A_k'$ with positive measure such that for every $a\in A_k''$ it also holds that
\begin{equation}
\sum_{Q\subset R_n(\ell_k,a)\cap \Omega_2} \abs{\star J(u_k)_Q (Q)/\gamma_n}
\leq C_\alpha (n-p_k)\int_{R_{n}(\ell_k,a)\cap \Omega_2} |\nabla u_k|^{p_k}\,dx,\label{est:mass_global}
\end{equation}
where the sum is intended to be over all $n$-cells $Q$ of $\G(\ell_k,a)$ that are contained in $R_n(\ell_k,a)\cap \Omega_2$, and $C_\alpha$ is a constant depending on $\alpha,n,m$.

Indeed, it is enough to repeat the previous argument for every possible choice of $n$ vectors of the canonical basis of $\R^{n+m}$, each time replacing $\widetilde{R}_n(\ell_k,a)$ with the union of the $n$-cells that are parallel to the $n$-plane spanned by those $n$ vectors, and writing $a=(b,c)\in [0,\ell_k]^n\times [0,\ell_k]^m$, where $b$ are the components of $a$ with respect to the $n$ chosen vectors and $c$ are the remaining ones. The only difference is that now we have to use \eqref{est:grid2} instead of \eqref{est:grid1}, so we obtain \eqref{est:meas_A-A'} with a constant depending also on $\delta$, which anyway is irrelevant. On the other hand, since we do not need to obtain sharp constants in \eqref{est:mass_global}, we could just use \eqref{th:mass} instead of \eqref{th:mass_sharp} to obtain the analogue of \eqref{est:mass_Xkp}.

In this way, for each different choice of $n$ vectors, we obtain that an estimate analogue to \eqref{est:mass_parallel} holds for every $a\in A_k '$, up to a small exceptional set whose measure is $o(\ell_k^{n+m})$. Finally, we sum all the $\binom{n+m}{m}$ inequalities obtained with all the possible choices of $n$ vectors, and we obtain \eqref{est:mass_global}.

We are now ready to conclude the proof. First of all, for every $k$ sufficiently large we take $a_k\in A_k''$ so that \eqref{est:mass_parallel}, \eqref{est:mass_global} and \eqref{est:grid1}-\eqref{est:grid4} hold with $a=a_k$.

Then we consider the maps $\Phi^k:\R^{n+m}\setminus R_{n-1}(\ell_k,a_k)\to R_m '(\ell_k,a_k)$ provided by Lemma~\ref{lemma:deformation}, and we set $\Sigma_k':=\Phi^k _\sharp(\Sigma_k)$.

If $k$ is sufficiently large so that $\dist(\Omega_1,\partial \Omega_2)>2\ell_k \sqrt{n+m}$, then from Lemma~\ref{lemma:deformation}, we deduce that $\Sigma_k'$ is an integral polyhedral $m$-current supported on $R_m '(\ell_k,a_k)$, with no boundary in $\Omega_1$, and such that
$$\F_{\Omega_1}(\Sigma_k'-\star Ju_k/\gamma_n)=\F_{\Omega_1}(\Sigma_k'- \Sigma_k)\leq C\ell_k ^{m+1} \int_{\Sigma_k\cap \Omega_2} \frac{d\mathcal{H}^{m}(x)}{\dist(x,R_{n-1}(\ell_k,a))^{m}},$$
hence \eqref{est:grid3} implies that
$$\F_{\Omega_1}(\Sigma_k'-\star Ju_k/\gamma_n)\leq \frac{C}{\delta}\ell_k \M_{\Omega_2}(\star Ju_k)\leq \frac{C}{\delta} \frac{\ell_k}{(n-p_k)^N},$$
and the right-hand side tends to zero as $k\to +\infty$, so the sequence $\{\star Ju_k\}$ is asymptotically equivalent to $\gamma_n \Sigma_k '$ in the flat topology of $\Omega_1$.

Moreover, from the second part of Lemma~\ref{lemma:deformation} and Remark~\ref{rem:Jacobian_intersection} we deduce that
$$\M_{\Omega_1}(\Sigma_k')\leq\sum_{Q\subset R_n(\ell_k,a_k)\cap \Omega_2}  \abs{\star J(u_k)_Q (Q)/\gamma_n}\ell_k ^m,$$
hence $\M_{\Omega_1}(\Sigma_k')$ is uniformly bounded due to \eqref{est:mass_global}, \eqref{est:grid2} and \eqref{hp:energy-bound-liminf}. Since $\M_{\Omega_1}(\partial \Sigma_k')=0$, by the Federer-Fleming compactness theorem we deduce that there exists a subsequence (not relabeled) and an integral $m$-current $\Sigma$ that is a boundary (since it is limit of boundaries) such that $\Sigma_k '\to \Sigma$, and hence also $\star Ju_k \to \gamma_n \Sigma$, in the flat topology of $\Omega_1$.

Moreover, we have that
$$|\Sigma\mres\eta|(\Omega_1)\leq \liminf_{k\to +\infty} |\Sigma_k '\mres \eta| (\Omega_1)\leq \liminf_{k\to +\infty} \sum_{Q\subset \widetilde{R}_n(\ell_k,a_k)\cap \Omega_2} \abs{\star J(u_k)_Q (Q)/\gamma_n}\ell_k ^m,$$
because, since $\eta = e_{n+1}\wedge\dots\wedge e_{n+m}$, the current $\Sigma_k '\mres \eta$ is just the restriction of $\Sigma_k'$ to the $m$-cells $Q'\subset R_m '(\ell_k,a_k)$ that are orthogonal to $\operatorname{span}(e_1,\dots,e_n)$, on which the multiplicity of $\Sigma_k'$ coincides with $\abs{\star J(u_k)_Q (Q)/\gamma_n}$ by Remark~\ref{rem:Jacobian_intersection}.

Hence, from \eqref{est:mass_parallel} and \eqref{est:grid1} we obtain that
\begin{align*}
(n-1)^{\frac{n}{2}}\omega_{n-1} |\Sigma\mres\eta|(\Omega_1) &\leq \frac{(1+\delta)^2}{\alpha}\liminf_{k\to +\infty} \,  \ell_k ^m (n-p_k)\int_{\widetilde{R}_{n}(\ell_k,a)\cap\Omega_2} |\nabla u_k|^{p_k}\,dx\\
&\leq \frac{(1+\delta)^3}{\alpha}\liminf_{k\to +\infty} \,  (n-p_k)\int_{\Omega_2} |\nabla u_k|^{p_k}\,dx.
\end{align*}

Letting $\delta\to 0^+$ and $\alpha\to 1^-$ we obtain \eqref{eq:liminf_local}.
\end{proof}

At this point, the proof of Theorem~\ref{main_res} follows from the combination of Proposition~\ref{prop:approx} and Proposition~\ref{prop:cpt-liminf}.

\begin{proof}[Proof of $(i)$ in Theorem \ref{main_res}, when $m>0$] 
Let us fix any sequence $p_k\to n^-$, and let us set $u_k:=u_{p_k}$. Up to a subsequence, not relabeled, we can assume that the liminf in \eqref{eq: liminf main} is a limit, so it does not change if we extract further subsequences.

As in the proof of the case $m=0$, we can extend each $u_k$ to some neighborhood $U$ of $\Omega$, so that
$$\int_{U} |\nabla u_k|^{p_k}\,dx \leq C_{\Omega,U} \int_{\Omega} |\nabla u_k|^{p_k} \,dx,$$
for every $k\in\N$, where $C_{\Omega,U}>0$ does not depend on $k$, so \eqref{hp:energy-bound} holds also with $U$ in place of $\Omega$.

Now we fix an open set $U'$ such that $\Omega\Subset U'\Subset U$, and $\delta \in (0,1/8)$ and we apply Proposition~\ref{prop:approx} with $\Omega=U'$, so we obtain a sequence $\{\tilde{p}_k\}$ such that $\tilde{p}_k\to n^-$ and a sequence of maps $\{\tilde{u}_k\}$ defined in $U'$, smooth outside smooth submanifolds, such that \eqref{est:approx2}, \eqref{est:approx3} and \eqref{approx_flat} hold with $\Omega=U'$.

In particular, \eqref{est:approx3} applied with $\Omega_1=U'$ and $\Omega_2=U$, and \eqref{hp:energy-bound} imply that
$$\limsup_{k\to +\infty} \, (n-\tilde{p}_k)\int_{U'} |\nabla \tilde{u}_k|^{\tilde{p}_k}\,dx <+\infty,$$
and this, together with \eqref{est:approx2}, ensures that the sequences $\{\tilde{p}_k\}$ and $\{\tilde{u}_k\}$ satisfy the assumptions of Proposition~\ref{prop:cpt-liminf} with $\Omega_1=\Omega$ and $\Omega_2=U'$.

Hence, from Proposition~\ref{prop:cpt-liminf} we deduce that, up to a subsequence, $\star J\tilde{u}_k \to \gamma_n \Sigma$, for some integral $m$-boundary $\Sigma$, and therefore, by \eqref{approx_flat}, also $\star Ju_k \to \gamma_n \Sigma$.

At this point, the liminf inequality follows from \eqref{est:approx3}, \eqref{eq:liminf_local} and a localization argument. To be precise, let us fix any finite family of open sets $A_1,\dots,A_N\Subset \Omega$ with pairwise disjoint closures, and some simple unit $m$-covectors $\eta_1,\dots,\eta_N$. Let also $A_1',\dots ,A_N '\Subset \Omega$ be pairwise disjoint open neighborhoods of $A_1,\dots,A_N$.

Then, combining \eqref{est:approx3}, applied with $\Omega_1=A_1'\cup\dots\cup A_N'$ and $\Omega_2 = \Omega$, with \eqref{eq:liminf_local} applied with $\Omega_1=A_i$, $\Omega_2=A_i'$ and $\eta=\eta_i$ for every $i\in \{1,\dots,N\}$, yields
\begin{align*}
\liminf_{k\to +\infty} \, (n- p_k) \int_{\Omega} |\nabla u_k|^{p_k}\,dx &\geq \sum_{i=1} ^{N} \liminf_{k\to +\infty} L_\delta ^{-1} (n-\tilde{p}_k) \int_{A_i '} |\nabla \tilde{u}_k|^{\tilde{p}_k}\,dx\\
&\geq L_\delta^{-1} (n-1)^{\frac{n}{2}} \omega_{n-1} \sum_{i=1} ^{N} |\Sigma\mres \eta_i|(A_i).
\end{align*}

Letting $\delta\to 0^+$ and taking the supremum over all admissible choices of $A_1,\dots,A_N$ and $\eta_1,\dots,\eta_N$, we obtain \eqref{eq: liminf main}.
\end{proof}

\subsection{Upper bound}\label{sbs: upper bound}

This section is devoted to the proof of $(ii)$ in Theorem \ref{main_res}, which provides the $\Gamma$-limsup inequality in the proof of the $\Gamma$-convergence. We first need the flat version of this estimate, that is the computation of the $p$-energy of the standard codimension $n$ vortex in $\R^{n+m}$. 

Let $ u_{n}^\star \colon (\R^n \setminus \{ 0 \}) \times \R^{m} \to \Sp^{n-1}$ be defined by
\begin{align}\label{eq: vortex def}
         u_{n}^\star (x', x'') := \frac{x'}{\abs{x'}} \qquad\forall \, (x',x'')\in (\R^n \setminus \{ 0 \}) \times \R^{m} .
\end{align}

\begin{lemma}\label{lem: flat vortex computation} Let $n\geq 2$, $m\geq 0$ be integers. Let $ A \subset \R^{m}$ be a bounded open set and $\delta>0$. Then, for every $p<n$ there holds 
\begin{equation*}
    \int_{B_\delta^n \times A} | \nabla u_{n}^\star|^p \, dx = (n-1)^{\frac{p}{2}} \omega_{n-1} \mathcal{H}^m(A) \frac{\delta^{n-p}}{n-p} . 
\end{equation*}
\end{lemma}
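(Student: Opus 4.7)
The plan is to reduce the integral to a one-dimensional radial integral via Fubini and polar coordinates, after computing the pointwise norm of the gradient of the standard vortex in closed form.

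First, I would observe that since $u_n^\star(x',x'')$ depends only on $x'$, all derivatives in the $x''$ directions vanish. Hence $|\nabla u_n^\star(x',x'')| = |\nabla_{x'} u_n^\star(x')|$, and Fubini gives
\begin{equation*}
\int_{B_\delta^n\times A} |\nabla u_n^\star|^p \, dx = \mathcal{H}^m(A) \int_{B_\delta^n} |\nabla_{x'} u_n^\star(x')|^p \, dx'.
\end{equation*}

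Next, I would compute $|\nabla_{x'} u_n^\star|$ explicitly. Writing $f(x'):= x'/|x'|$, a direct computation gives
\begin{equation*}
\partial_i f_j(x') = \frac{\delta_{ij}}{|x'|} - \frac{x'_i x'_j}{|x'|^3} = \frac{1}{|x'|}\bigl(I - \hat{x}'\otimes \hat{x}'\bigr)_{ij},
\end{equation*}
where $\hat{x}' = x'/|x'|$. Since $I - \hat{x}'\otimes \hat{x}'$ is the orthogonal projection onto the hyperplane $(\hat{x}')^\perp$, which has dimension $n-1$, its Frobenius norm squared equals $n-1$. Therefore $|\nabla_{x'} u_n^\star(x')| = \sqrt{n-1}/|x'|$.

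Finally, I would pass to polar coordinates in $\R^n$ to conclude. Since $p<n$, the integrand $|x'|^{-p}$ is locally integrable near the origin and we obtain
\begin{equation*}
\int_{B_\delta^n} |\nabla_{x'} u_n^\star|^p \, dx' = (n-1)^{p/2}\int_{B_\delta^n} \frac{dx'}{|x'|^p} = (n-1)^{p/2}\omega_{n-1} \int_0^\delta r^{n-1-p}\, dr = (n-1)^{p/2}\omega_{n-1} \frac{\delta^{n-p}}{n-p}.
\end{equation*}
Combining with the Fubini reduction yields the claimed identity. There is no real obstacle here: the only subtlety is checking that the Frobenius norm of $I-\hat{x}'\otimes \hat{x}'$ is $\sqrt{n-1}$, and that integrability in the radial variable requires exactly the hypothesis $p<n$.
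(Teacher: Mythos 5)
Your proof is correct and follows essentially the same route as the paper: compute $\nabla u_n^\star$ explicitly, observe it is $1/|x'|$ times the orthogonal projection onto $(\hat{x}')^\perp$ (so its Frobenius norm is $\sqrt{n-1}/|x'|$), then reduce by Fubini and polar coordinates. The only cosmetic difference is the order of operations (you apply Fubini first, the paper computes the gradient norm first), which changes nothing.
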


\begin{proof} By direct computation, we see that
\begin{equation*}
    \nabla u_{n}^\star (x',x'')= \frac{1}{|x'|} \left( \, {\textbf{I}}_{n\times n } - \frac{x'}{|x'|} \otimes \frac{x'}{|x'|} \, \bigg| \, { \textbf{0} }_{m\times n } \right) . 
\end{equation*}
Since $\textbf{I}_{n\times n } - \frac{x'}{|x'|} \otimes \frac{x'}{|x'|}$ is a projector onto an $(n-1)$-dimensional subspace in $\R^n$, it has $(n-1)$ eingevalues equal to $1$ and one eigenvalue equal to $0$, thus 
\begin{equation*}
    |\nabla u_{n}^\star| = \frac{1}{|x'|} \sqrt{n-1} . 
\end{equation*}

Then, by Fubini's theorem and polar coordinates 
\begin{align*}
    \int_{B_\delta^n \times A} | \nabla u_{n}^\star|^p \, dx & = \int_{A}  dx'' \int_{B^n _\delta} | \nabla u_{n}^\star|^p \, dx' \\ &= (n-1)^{\frac{p}{2}} \omega_{n-1} \mathcal{H}^m(A) \int_{0}^\delta \rho^{n-1-p} \, d\rho = (n-1)^{\frac{p}{2}} \omega_{n-1} \mathcal{H}^m(A) \frac{\delta^{n-p}}{n-p}.
\end{align*}
\end{proof}

The proof of the general result is based on a polyhedral approximation for integral currents and the existence of a map with a prescribed Jacobian on these polyhedral currents. There exist many variants of this kind of results in the literature; here we use two statements taken from \cite{2005-Indiana-ABO} and \cite{ABO2-singularities}, respectively, that we recall next. 

\begin{teo}[Proposition~8.6 in \cite{2005-Indiana-ABO}]\label{teo: poly approx} Let $n\geq 2$, $m\geq 0$ be integers, and let $\Omega\subset \R^{n+m}$ be a bounded Lipschitz domain. Let $\Sigma$ be an $m$-dimensional integral boundary in $\Omega$ with $\M_\Omega(\Sigma) <+\infty$. Then 
\begin{itemize}
    \item[$(i)$] $\Sigma=\partial N$ in $\Omega$, where $N$ is an $(m+1)$-dimensional integral current in $\R^{n+m}$ with compact support and $|\partial N|(\partial \Omega)=0$.
    \item[$(ii)$] There exists a sequence $\{N_i\}$ of $(m+1)$-dimensional polyhedral currents in $\R^{n+m}$ such that $\partial N_i \to \Sigma$ in $\F_\Omega$ and $\M_{\Omega}(\partial N_i) \to \M_{\Omega}(\Sigma)$. In addition, we can choose $N_i$ so that  $|\partial N_i|(\partial \Omega)=0$ and both $N_i$ and $\partial N_i$ have multiplicity $1$.
\end{itemize}

\end{teo}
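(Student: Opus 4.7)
The plan is to follow the classical Federer--Fleming scheme, with two additional cares: forcing the filling $N$ to have boundary disjoint from $\partial\Omega$, and reducing the multiplicities of the polyhedral approximants to one.

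For part $(i)$, I would start from the defining property that $\Sigma=\partial T$ in $\Omega$ for some integral $(m+1)$-current $T$. To achieve compact support in $\R^{n+m}$, I would first intersect $T$ with a large ball $B_R\supset\overline\Omega$, the error being a slice of $T$ along $\partial B_R$ that lies far from $\Omega$ and so does not affect $\Sigma=\partial T$ in $\Omega$. To guarantee $|\partial N|(\partial\Omega)=0$, I would slice along the distance function $d(\cdot,\partial\Omega)$: by the standard slicing inequality
\[
\int_0^\infty \M\bigl(\langle T,d(\cdot,\partial\Omega),t\rangle\bigr)\,dt \le \M(T)<+\infty,
\]
almost every small $t>0$ produces an integral slice of finite mass. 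Choosing such a $t$, I would remove from $T$ the thin shell $\{d<t\}$ and replace it with the slice, obtaining an integral $(m+1)$-current $N$ whose boundary in $\Omega$ is still $\Sigma$, and whose boundary in $\R^{n+m}\setminus\Omega$ is supported on $\{d=t\}\cup\partial B_R$, disjoint from $\partial\Omega$.

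For part $(ii)$, I would apply the deformation theorem on a cubical grid $\G(\ell,a)$, similar in spirit to Lemma~\ref{lemma:deformation} but applied to $N$ rather than to $\star Ju$. For a generic shift $a$, the push-forward $\Phi_\sharp N$ is a polyhedral $(m+1)$-current supported on the $(m+1)$-skeleton of $\G(\ell,a)$, with integer multiplicities, and satisfies the classical estimates
\[
\F_\Omega(N-\Phi_\sharp N)\le C\ell\,\M(N),\qquad \limsup_{\ell\to 0}\M_\Omega(\partial\Phi_\sharp N)\le \M_\Omega(\Sigma).
\]
The second inequality requires averaging over $a\in[0,\ell]^{n+m}$, so that the shell $\{d(\cdot,R_{n-1})<\text{small}\}$ does not carry too much mass of $\partial N$; combined with lower semicontinuity of mass, this yields $\M_\Omega(\partial N_i)\to \M_\Omega(\Sigma)$ along a sequence $\ell_i\to 0$.

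To force multiplicity one on both $N_i$ and $\partial N_i$, I would replace every cell of multiplicity $k$ by $k$ parallel copies at mutual distance $\ll \ell_i$; the flat error is of order $k\ell_i$ per cell and can be absorbed into a diagonal extraction without affecting the convergence of masses. Finally, since only a measure-zero set of translations of the grid makes $|\partial N_i|$ charge $\partial\Omega$, a generic choice of shift $a$ guarantees the last requirement. The principal obstacle is the simultaneous control of three competing features---convergence of the boundary mass to $\M_\Omega(\Sigma)$, multiplicity equal to one, and disjointness $|\partial N_i|(\partial\Omega)=0$---each obtained by an averaging over a different parameter; showing that a common good choice exists in the product space of (mesh, shift, perturbation) relies on a Fubini argument entirely analogous to the one carried out in the proof of Proposition~\ref{prop:cpt-liminf}.
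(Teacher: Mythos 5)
First, a point of comparison: the paper does not prove this statement at all. It is imported verbatim as Proposition~8.6 of \cite{2005-Indiana-ABO}, so there is no internal proof to measure your sketch against; I can only assess it on its own terms.

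Your part $(ii)$ is the standard Federer--Fleming route (deformation of $N$ onto a grid, averaging over translations to control the mass of the deformed boundary, splitting a cell of multiplicity $k$ into $k$ nearby parallel copies to reduce to multiplicity one), and this is essentially how \cite{2005-Indiana-ABO} obtain the statement from the polyhedral approximation theorem; the only caveat is that the control $\limsup_{\ell\to 0}\M_\Omega(\partial\Phi_\sharp N)\leq \M_\Omega(\Sigma)$ near $\partial\Omega$ relies on $|\partial N|(\partial\Omega)=0$, i.e.\ on part $(i)$.

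Part $(i)$, however, contains a genuine gap. The truncation by a large ball $B_R\supset\overline{\Omega}$ is vacuous, since $T$ is a current in the bounded set $\Omega$ and there is nothing outside $B_R$ to cut. More seriously, the slicing step destroys the boundary condition \emph{inside} $\Omega$: since $\supp T\subset\Omega$, the slice $\langle T,d(\cdot,\partial\Omega),t\rangle$ is supported entirely in $\Omega$, and setting $N=T\mres\{d>t\}$ gives $\partial N=\Sigma\mres\{d>t\}\pm\langle T,d,t\rangle$ in $\Omega$. You have thus lost the piece $\Sigma\mres\{d\leq t\}$ and created new boundary on $\{d=t\}\cap\Omega$; the extra boundary is indeed disjoint from $\partial\Omega$, but it sits inside $\Omega$, which is exactly what must be avoided. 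The correct argument pushes the excess \emph{out of} $\overline{\Omega}$ rather than cutting it off inside: one uses the Lipschitz collar structure of $\partial\Omega$ to build a bi-Lipschitz outward deformation and sweeps $\Sigma\mres\{d<t\}$ together with the slice across $\partial\Omega$, the finiteness of $\M_\Omega(\Sigma)$ (a hypothesis your sketch never invokes, although it is clearly indispensable, since $\partial N\mres\Omega=\Sigma$ forces $\M_\Omega(\Sigma)\leq\M(\partial N)<+\infty$) being what guarantees that the swept correction current is an integral current of finite mass. Without this correction, part $(i)$ --- and with it the input to part $(ii)$ --- does not follow.
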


\begin{teo}[Theorem~5.10 in \cite{ABO2-singularities}]\label{teo: sing competitor} Let $\Sigma =\partial N $ be an $m$-dimensional integral current with multiplicity $1$ which is the boundary of an $(m+1)$-polyhedral current $N$ in $\R^{n+m}$, and let $S_{m-1}^N$ denote the union of the faces of $N$ of dimension $m-1$. Then there exists $u\in W^{1, n-1}_{\rm loc}(\R^{n+m}; \Sp^{n-1})$ such that 
\begin{itemize}
    \item[$(i)$] $\star Ju = \gamma_n \Sigma $, $u$ is locally Lipschitz in $ \R^{n+m} \setminus (\Sigma \cup S_{m-1}^N)$ and is constant outside a bounded neighborhood of $N$. 
    \item[$(ii)$] $ |\nabla u| \in L^p(\R^{n+m})$ for every $p<n$ and satisfies $| \nabla u(x)| = O(1/{\rm dist}(x, \Sigma \cup S_{m-1}^N))$.
    \item[$(iii)$] For every $\gamma, \delta \in (0,1]$ small (depending only on $N$), for every $m$-dimensional face $F$ of $\Sigma$, if we identify the $m$-dimensional affine plane which contains $F$ with $\R^m$, and writing $ x =(x', x'') \in \R^{n} \times \R^m $, we have 
\begin{equation*}
    u(x) = \frac{x'}{|x'|}  , \s \mbox{for all } x\in \mathcal{U}_{\delta, \gamma}(F) , 
\end{equation*}
where 
\begin{equation}\label{eq: Ugammadelta def}
    \mathcal{U}_{\delta, \gamma }(F) = \big\{ (x',x'') \in \R^{n+m} \, : \, x''\in F, \, |x'| \le \min\{\delta, \gamma {\rm dist}(x'', \partial F) \} \big\} .
\end{equation}
Here $\delta, \gamma $ are small so that the analogous $\mathcal{U}_{\delta, \gamma }(\cdot)$ relative to the $m$-dimensional faces of $N$ have pairwise disjoint interiors (see the proof of \cite[Proposition 5.8]{ABO2-singularities} for more details on the choice of $\delta, \gamma$). In particular, for fixed $\gamma=\gamma(N)>0$ small, $\delta$ can be taken arbitrarily small.
\end{itemize}
\end{teo}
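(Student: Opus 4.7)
The plan is to construct $u$ explicitly by pasting together local vortex models around the $m$-faces of $\Sigma = \partial N$ and interpolating across the interior of $N$ using the polyhedral structure, setting $u$ equal to a fixed constant outside a bounded neighborhood of $N$. The singularities will be confined to $\Sigma$ (where the vortices live) and to the lower-dimensional skeleton $S_{m-1}^N$ (where incompatibilities between neighboring local models accumulate).

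First I would fix the local models. For each $m$-face $F$ of $\Sigma$, choose orthonormal coordinates $(x', x'')\in\R^n\times\R^m$ with $F\subset\{0\}\times\R^m$ and $\R^n\times\{0\}$ oriented in agreement with the orientation of $\Sigma$ at $F$; on the tube $\mathcal{U}_{\delta,\gamma}(F)$ defined in \eqref{eq: Ugammadelta def}, prescribe $u(x) = x'/|x'|$. This immediately yields property (iii), gives the pointwise bound $|\nabla u(x)| = \sqrt{n-1}/|x'|$, and provides $L^p$ integrability on $\mathcal{U}_{\delta,\gamma}(F)$ for every $p<n$ via Lemma~\ref{lem: flat vortex computation}. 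For $\delta$ and $\gamma$ small enough (as permitted by the statement), these tubes indexed by the various $m$-faces of $\Sigma$ are pairwise disjoint, so the local definitions are consistent.

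The central task is to extend $u$ from $\bigcup_{F\subset\Sigma}\mathcal{U}_{\delta,\gamma}(F)$ to a map defined on a full neighborhood of $N$, smooth off $\Sigma\cup S_{m-1}^N$. I would proceed polyhedron by polyhedron: for each $(m+1)$-polyhedron $P$ of $N$, the extension on a tube around the interior of $P$ is an $\Sp^{n-1}$-valued extension problem with prescribed data on a tube around $\partial P$, namely the vortex of degree $\pm 1$ on the $m$-faces $F\subset\partial P\cap\Sigma$ and the data imported from the adjacent polyhedron on interior $m$-faces of $N$. The crucial compatibility is that the sum of the linking degrees of the boundary data around $\partial P$ vanishes — this is the identity $\partial\partial P = 0$ read as the sum of linking numbers — so, by standard obstruction theory, the boundary data extends to a smooth map on a tubular neighborhood of $P$ minus the $(m-1)$-faces (which are contained in $S_{m-1}^N$).

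The hardest part will be to perform these polyhedron-by-polyhedron extensions coherently so as to produce a global map, smooth off $\Sigma\cup S_{m-1}^N$, satisfying the quantitative bound $|\nabla u(x)| = O(1/\dist(x,\Sigma\cup S_{m-1}^N))$. I would do it inductively, starting from the $(m+1)$-polyhedra and moving down, using partitions of unity supported in sets well-separated from $\Sigma\cup S_{m-1}^N$; adjacent polyhedra sharing an interior $m$-face can be glued via an explicit linear homotopy in the tube which preserves the gradient bound. Once this is done, $L^p$ integrability for $p<n$ follows from $|\nabla u| = O(1/\dist)$ by slicing in transverse $n$-disks exactly as in Lemma~\ref{lem: flat vortex computation}, since $\Sigma\cup S_{m-1}^N$ has dimension $m$ (hence codimension $n$) in $\R^{n+m}$, and a cutoff outside a larger neighborhood of $N$ makes $u$ eventually constant. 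Finally, (i) follows from Proposition~\ref{prop:Jacobian_integral}: $u$ is continuous off the $m$-rectifiable set $\Sigma\cup S_{m-1}^N$, the linking degree around each $m$-face of $\Sigma$ is $\pm 1$ by the explicit model $x'/|x'|$, the linking degrees around any interior $m$-face of $N$ vanish by construction, and the $(m-1)$-dimensional skeleton $S_{m-1}^N$ cannot carry any contribution to the $m$-current $\star Ju$.
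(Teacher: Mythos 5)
First, a point of order: the paper does not prove this statement at all. It is imported verbatim as Theorem~5.10 of \cite{ABO2-singularities} and used as a black box in Subsection~\ref{sbs: upper bound}, so there is no in-paper proof to compare yours against; I can only measure your sketch against the known construction of Alberti--Baldo--Orlandi.

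Your outline has the right architecture (explicit vortices $x'/|x'|$ on tubes around the $m$-faces of $\Sigma$, an extension across $N$ that is constant far away, identification of $\star Ju$ through linking degrees and Proposition~\ref{prop:Jacobian_integral}), and the easy steps are handled correctly: disjointness of the tubes (though note the statement requires this for all $m$-faces of $N$, not only those of $\Sigma$), and $L^p$-integrability for $p<n$ from $|\nabla u|=O(1/\dist)$ together with the codimension-$n$ estimate of Lemma~\ref{lemma:int-dist-surf}. But the whole content of the theorem sits in the step you yourself label ``the hardest part'' and then do not carry out, and the one justification you do offer there is off target. The identity ``$\partial\partial P=0$ read as a sum of linking numbers'' is not the compatibility that makes the construction work: for a single multiplicity-one $(m+1)$-polyhedron $P$ of $N$, the cycle $\partial P$ contains \emph{all} of its $m$-faces with nonzero coefficient, so nothing cancels at the level of one polyhedron. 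What must be proved is that the glued global map has linking degree zero around every \emph{interior} $m$-face $F$ of $N$, and this comes from $\partial N=\Sigma$, i.e.\ from the fact that the two polyhedra adjacent to $F$ induce opposite orientations on it; this cancellation is exactly what the ``dipole'' maps of \cite{ABO2-singularities} (one per $(m+1)$-face of $N$, each with Jacobian $\gamma_n\partial G$) are engineered to produce, and it does not follow from extending polyhedron by polyhedron with obstruction theory unless you track orientations explicitly. Moreover, the gluing itself is asserted rather than performed: a partition of unity does not act on $\Sp^{n-1}$-valued maps, a ``linear homotopy'' between sphere-valued data leaves the sphere (one needs a geodesic interpolation, hence uniform non-antipodality of the two data on the overlap), and the quantitative bound $|\nabla u(x)|=O(1/\dist(x,\Sigma\cup S^{N}_{m-1}))$ must be verified \emph{through} the gluing regions, in particular near the $(m-1)$-skeleton where several local models degenerate simultaneously. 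Since properties $(ii)$ and $(iii)$ with this exact rate are precisely what the recovery-sequence argument consumes, a sketch that stops short of establishing them leaves the essential gap open.
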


We now have all the ingredients to prove the existence of the recovery sequence for the upper bound inequality. 

\begin{proof}[Proof of $(ii)$ in Theorem \ref{main_res}] Fix an $m$-dimensional integral boundary $\Sigma$ in $\Omega$ and $p\in (n-1,n)$. 

First, by the polyhedral approximation Theorem \ref{teo: poly approx} and a diagonal argument, we can assume that $\Sigma$ has multiplicity $1$ and is the boundary (in $\Omega$) of an $(m+1)$-dimensional polyhedral current $N$ in $\R^{n+m}$ with multiplicity $1$ and $|\partial N|(\partial \Omega) = 0 $. In particular, $\Sigma$ is also polyhedral and satisfies $|\Sigma|(\partial \Omega)=0$. 

Denote by $S_{k}^\Sigma$ and $S_{k}^N$ the union of the $k$-dimensional faces of $\Sigma$ and $N$ respectively. Clearly, since $\partial N = \Sigma$ we have that $S_{k}^\Sigma \subset S_k^N$ for every $k\in \{0, \dotsc m\}$. Let $u$ be the map given by Theorem \ref{teo: sing competitor}, and consider $\delta, \gamma>0$ small as in $(iii)$ of Theorem \ref{teo: sing competitor}. Let also $r_\delta := \delta \sqrt{1+\gamma^{-2}}$. 

We cover $\Omega$ with three sets 
\begin{align*}
    \Omega \subset \Omega_F \cup \Omega_S \cup \Omega_E ,
\end{align*}
defined by 
\begin{equation*}
    \Omega_F  :=  \bigcup_{F\in S_m^\Sigma} \mathcal{U}_{\delta, \gamma}(F) \cap \Omega  ,
\end{equation*}
where $\mathcal{U}_{\delta, \gamma}(F)$ are defined in \eqref{eq: Ugammadelta def}, and 
\begin{equation*}
    \Omega_S  := \big(\{ {\rm dist}(\cdot, S_{m-1}^\Sigma) <r_\delta  \} \setminus \Omega_F \big) \cap \Omega , \qquad
    \Omega_E  :=   \{ {\rm dist} (\cdot, \Sigma) > \delta\} \cap \Omega. 
\end{equation*}
It is easily checked that the union of these three sets contains $\Omega$. Thus 
\begin{equation*}
    \int_{\Omega} |\nabla u|^p \, dx \le  \int_{\Omega_F} |\nabla u|^p \, dx+\int_{\Omega_S} |\nabla u|^p \, dx+\int_{\Omega_E} |\nabla u|^p \, dx , 
\end{equation*}
and we bound each contribution separately. 

By $(iii)$ of Theorem \ref{teo: sing competitor}, for every $F\in S_m^\Sigma$ we have that $u$ coincides up to a rigid motion with the standard vortex $u_n^\star$ (defined in \eqref{eq: vortex def}) in $\mathcal{U}_{\delta, \gamma}(F)$. With a little abuse of notation, we denote by $u_n^\star$ also the standard vortex composed with this rigid motion. Observe that, if $\Omega_\delta$ denotes a $\delta$-neighborhood of $\Omega$, we have  
\begin{equation*}
    \mathcal{U}_{\delta, \gamma}(F) \cap \Omega \subset B_\delta^n \times (F\cap \Omega_\delta) , \s \forall \, F\in S_m^\Sigma . 
\end{equation*}
Hence, for every $F\in S_m^\Sigma$, Lemma \ref{lem: flat vortex computation} gives
\begin{align*}
    \int_{\mathcal{U}_{\delta, \gamma}(F) \cap \Omega} |\nabla u|^p \, dx  = \int_{\mathcal{U}_{\delta, \gamma}(F) \cap \Omega} |\nabla u_n^\star|^p \, dx & \le \int_{B_\delta^n \times (F\cap \Omega_\delta)} |\nabla u_n^\star|^p \, dx \\ & =  (n-1)^{\frac{p}{2}} \omega_{n-1} \mathcal{H}^m(F\cap \Omega_\delta  ) \frac{\delta^{n-p}}{n-p} .
\end{align*} 
Adding up the contribution from all the faces, we obtain
\begin{equation}\label{eq: upper est faces}
    \int_{\Omega_F} |\nabla u|^p \, dx = \sum_{F\in S_m^\Sigma} \int_{ \mathcal{U}_{\delta, \gamma}(F) \cap \Omega } |\nabla u|^p \, dx \le (n-1)^{\frac{p}{2}} \omega_{n-1} \mathcal{H}^m(\Sigma\cap \Omega_\delta) \frac{\delta^{n-p}}{n-p} . 
\end{equation}

Now, we estimate the contribution on $\Omega_S$. By the pointwise estimate in $(ii)$ of Theorem \ref{teo: sing competitor} and the elementary inequality $1/\min\{a,b\} \le 1/a+1/b$ for $a,b>0$, we can bound
\begin{align*}
     \int_{\Omega_S} |\nabla u|^p \, dx & \le  C\int_{\Omega_S} \frac{dx}{ {\rm dist}(x, \Sigma \cup S^N_{m-1})^p} \\ &  \le C \int_{\Omega_S} \frac{dx}{ {\rm dist}(x, \Sigma)^p} + C \int_{\Omega_S} \frac{dx}{ {\rm dist}(x, S^N_{m-1})^p} . 
\end{align*}

Regarding the first term, since by definition $\Omega_S$ is disjoint from $\Omega_F$, we have that 
\begin{equation*}
    {\rm dist}(x, \Sigma) \ge \gamma {\rm dist}(x, S^N_{m-1}) ,  \, \s \forall \, x\in \Omega_S . 
\end{equation*}
Hence 
\begin{equation*}
    \int_{\Omega_S} |\nabla u|^p \, dx  \le C\biggl(\frac{1}{\gamma^p}+1 \biggr) \int_{\Omega_S} \frac{dx}{ {\rm dist}(x, S^N_{m-1})^p} . 
\end{equation*}
Moreover, by Lemma \ref{lemma:int-dist-surf} on the set $S^N_{m-1}$, which is a set of codimension $(n+1)$ in $\R^{n+m}$, we have that
\begin{equation*}
    \int_{\Omega_S} \frac{dx}{ {\rm dist}(x, S^N_{m-1})^p} \le  \int_{\{ {\rm dist}(\cdot, S^N_{m-1}) < r_\delta\}} \frac{dx}{ {\rm dist}(x, S^N_{m-1})^p} \le C_{N} \frac{r_{\hspace{-.8pt}\delta}^{n+1-p}}{n+1-p} , 
\end{equation*}
which gives
\begin{equation}\label{eq: upper est skeleton}
    \int_{\Omega_S} |\nabla u|^p \, dx \le C_N \biggl(\frac{1}{\gamma^p}+1 \biggr) \frac{r_{\hspace{-.8pt}\delta}^{n+1-p}}{n+1-p} 
\end{equation}

Lastly, we estimate the contribution of $\Omega_E$. By the very definition of $\Omega_E$ there holds 
\begin{equation*}
    {\rm dist}(x, \Sigma \cup S^N_{m-1}) \ge \min\{\delta, {\rm dist}(x, S^N_{m-1})\} , \s \forall \,  x\in \Omega_E .  
\end{equation*}
Thus, again by the estimate in $(ii)$ of Theorem \ref{teo: sing competitor}, we get 
\begin{align*}
      \int_{\Omega_E} |\nabla u|^p \, dx & \le  C\int_{\Omega_E} \frac{dx}{ {\rm dist}(x, \Sigma \cup S^N_{m-1})^p} \le C \int_{\Omega_E} \frac{dx}{ \min\{\delta, {\rm dist}(x, S^N_{m-1})\}^p} \\ & \le \frac{C}{\delta^p} \mathcal{H}^{n+m}(\Omega_E \cap \{ {\rm dist}(\cdot, S^N_{m-1}) >\delta\} ) + C\int_{\Omega_E \cap \{ {\rm dist}(\cdot, S^N_{m-1}) < \delta\}} \frac{dx}{ {\rm dist}(x, S^N_{m-1})^p} \\ & \le \frac{C}{\delta^p} \mathcal{H}^{n+m}(\Omega) + C\int_{\{ {\rm dist}(\cdot, S^N_{m-1}) < \delta\}} \frac{dx}{ {\rm dist}(x, S^N_{m-1})^p} .
\end{align*}
The second term in the last line can be estimated by Lemma \ref{lemma:int-dist-surf} as above, yielding 
\begin{equation}\label{eq: upper est exterior}
    \int_{\Omega_E} |\nabla u|^p \, dx \le \frac{C}{\delta^p} \mathcal{H}^{n+m}(\Omega) + C_N \frac{\delta^{n+1-p}}{n+1-p}. 
\end{equation}

Putting together \eqref{eq: upper est faces}, \eqref{eq: upper est skeleton}, and \eqref{eq: upper est exterior} gives
\begin{align*}
    \limsup_{p \to  n^-} \, (n-p) \int_{\Omega} |\nabla u|^p \, dx \le (n-1)^{\frac{n}{2}} \omega_{n-1} \mathcal{H}^m(\Sigma \cap \Omega_\delta),
\end{align*}
for every $\delta,\gamma \in (0,1]$ small as above. From here, to conclude, we observe that
\begin{equation*}
    \lim_{\delta \to 0^+} \mathcal{H}^m(\Sigma \cap \Omega_\delta) = |\Sigma|(\overline{\Omega}) = |\Sigma|(\Omega) = \M_{\Omega}(\Sigma) ,
\end{equation*}
since $|\Sigma|(\partial\Omega)=0$. Hence, letting $\delta \to 0^+$ for fixed $\gamma$ the conclusion follows. 
\end{proof}

\subsection{Boundary value problems}\label{sec: boundary val problems}

From Theorem~\ref{main_res}, following more or less the same strategy adopted in \cite{2005-Indiana-ABO} to pass from \cite[Theorem~1.1]{2005-Indiana-ABO} to \cite[Theorem~5.5]{2005-Indiana-ABO}, we can prove an analogous $\Gamma$-convergence result with Dirichlet boundary conditions.

Before stating the result, we recall that in the case $m\geq 1$, if $\Omega\subset \R^{n+m}$ is a bounded Lipschitz domain and $g\in W^{1-1/n,n}(\partial \Omega;\Sp^{n-1})$, then $\star Jg$ is a $(m-1)$-current on $\partial \Omega$, that is defined as $\partial (\star Ju)$ for any $u\in W^{1,n}(\Omega,\R^n)$ with trace $g$ on $\partial \Omega$ (see \cite{Hang-Lin-Jacobians} and \cite[Section~5.2]{2005-Indiana-ABO}). In what follows, for $p\in [1,n)$ we denote
\begin{equation*}
    W^{1,p}_g(\Omega; \R^n ) = \{ u\in W^{1,p}(\Omega; \R^n ) \, : \, u|_{\partial \Omega} =g \mbox{ in the sense of traces}\}.
\end{equation*}

Moreover, as in \cite[Section~5.1]{ABO2-singularities}, we say that two integer rectifiable $m$-currents $\Sigma_1$ and $\Sigma_2$ are \emph{cobordant} in $\overline{\Omega}$ if there exists an integral $(m+1)$-current $N$ supported in $\overline{\Omega}$ such that $\Sigma_1-\Sigma_2=\partial N$. Notice that this is equivalent to say $\partial \Sigma_1=\partial \Sigma_2$ and $\Sigma_1-\Sigma_2$ represent the trivial class in the $m$-th homology group of $\overline{\Omega}$.

\begin{teo}\label{thm:bvp}
Let $n\geq 2$, $m\geq 1$ be integers, and let $\Omega\subset \R^{n+m}$ be a bounded Lipschitz domain. Let $g\in W^{1-1/n,n}(\partial \Omega;\Sp^{n-1})$ be a map, fix $u\in W^{1,n}_g(\Omega;\R^{n})$, and let $\Sigma_y:=u^{-1}(y)$ be a regular level set of $u$ with $|y|<1$ (in the sense of \cite[Section~2.7]{2005-Indiana-ABO}).

\noindent Then the following statements hold.

\begin{itemize}
   \item[$(i)$](Compactness and $\Gamma$-liminf inequality) Let $p \in (n-1,n)$ be a (discrete) sequence with $p\to n^-$, and $u_p\in W^{1,p}_g(\Omega;\Sp^{n-1})$ be a sequence such that
\begin{equation*}
    \limsup_{p\to n^-} \, (n-p) \int_{\Omega}|\nabla u_p|^{p}\,dx <+\infty. 
\end{equation*}
    Then there exists a subsequence (not relabeled) and an integer rectifiable $m$-current $\Sigma$, cobordant with $\Sigma_y$ in $\overline{\Omega}$, such that, if we extend $\star Ju_p$ to zero outside $\Omega$, then $\F_{\R^{n+m}}(\star Ju_p/\gamma_n- \Sigma)\to 0$, and (along this subsequence) for every open set $A\subset \R^{n+m}$ it holds that
    \begin{equation*}
        \liminf_{p\to n^-} \, (n-p) \int_{\Omega \cap A }|\nabla u_p|^{p}\,dx \geq (n-1)^{\frac{n}{2}}\omega_{n-1} \M_A(\Sigma).
    \end{equation*}

    \item[$(ii)$] ($\Gamma$-limsup inequality) For every integer rectifiable $m$-current $\Sigma$, supported in $\overline{\Omega}$ and cobordant with $\Sigma_y$ in $\overline{\Omega}$, and every $p\in (n-1,n)$ there exist maps $u_p \in  W^{1,p}_g(\Omega,\Sp^{n-1})$ such that $\F_{\R^{n+m}}(\star Ju_p/\gamma_n -\Sigma)\to 0$ and
    $$\limsup_{p\to n^-} \, (n-p) \int_{\Omega}|\nabla u_p|^{p}\,dx\leq (n-1)^{\frac{n}{2}}\omega_{n-1} \M_{\R^{n+m}}(\Sigma).$$
\end{itemize}
\end{teo}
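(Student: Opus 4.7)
The plan is to reduce both parts of the theorem to Theorem~\ref{main_res} by extending the problem to a slightly larger bounded Lipschitz domain $\widetilde{\Omega}\Supset\Omega$ and carefully gluing along $\partial\Omega$. The preliminary ingredient, fixed once and for all, is a sphere-valued extension $v\in W^{1,n}(\widetilde{\Omega}\setminus\overline{\Omega};\Sp^{n-1})$ whose trace on $\partial\Omega$ equals $g$ (and whose trace on $\partial\widetilde{\Omega}$ is arbitrarily prescribed); the existence of such $v$ for $g\in W^{1-1/n,n}(\partial\Omega;\Sp^{n-1})$ follows by standard sphere-valued extension arguments, as used in \cite[Section~8]{2005-Indiana-ABO}.

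For part~$(i)$, I would extend each $u_p$ by setting $\tilde{u}_p:=u_p$ in $\Omega$ and $\tilde{u}_p:=v$ in $\widetilde{\Omega}\setminus\overline{\Omega}$; the matching traces yield $\tilde{u}_p\in W^{1,p}(\widetilde{\Omega};\Sp^{n-1})$, and
\[
(n-p)\int_{\widetilde{\Omega}}|\nabla\tilde{u}_p|^p\,dx\leq (n-p)\int_{\Omega}|\nabla u_p|^p\,dx+(n-p)\|\nabla v\|_{L^p(\widetilde{\Omega}\setminus\overline{\Omega})}^p,
\]
where the last term vanishes as $p\to n^{-}$ since $v\in W^{1,n}$. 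Theorem~\ref{main_res} applied on $\widetilde{\Omega}$ then produces a subsequence along which $\star J\tilde{u}_p/\gamma_n\to\tilde{\Sigma}$ in $\F_{\widetilde{\Omega}}$ for some integral $m$-boundary $\tilde{\Sigma}$. Since $\tilde{u}_p\equiv v$ outside $\overline{\Omega}$, necessarily $\tilde{\Sigma}\mres(\widetilde{\Omega}\setminus\overline{\Omega})=\star Jv/\gamma_n$; setting $\Sigma:=\tilde{\Sigma}-\star Jv/\gamma_n\mres(\widetilde{\Omega}\setminus\overline{\Omega})$ gives an integer rectifiable $m$-current supported in $\overline{\Omega}$, and the flat convergence $\star Ju_p/\gamma_n\to\Sigma$ in $\R^{n+m}$ (with $\star Ju_p$ extended by zero) follows from the cancellation of the exterior piece. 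The localized liminf on every open set $A\subset\R^{n+m}$ is obtained by applying the grid argument of Section~\ref{sbs: compact and lower bound} to $\tilde{u}_p$ on an open subset $A'\Subset A\cap\widetilde{\Omega}$.

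The step I expect to require the most care is identifying the cobordism class of $\Sigma$ with $\Sigma_y$. I would also extend $u$ by setting $\tilde{u}:=u$ in $\Omega$ and $\tilde{u}:=v$ in $\widetilde{\Omega}\setminus\overline{\Omega}$, so that $\tilde{u}\in W^{1,n}(\widetilde{\Omega};\R^n)$ and $\Sigma_y=\tilde{u}^{-1}(y)\subset\overline{\Omega}$ (since $|v|=1>|y|$). Combining Proposition~\ref{prop:Jacobian_integral} (which provides integer rectifiable coboundings for $\star J\tilde{u}_p/\gamma_n$ and $\star Jv/\gamma_n\mres(\widetilde{\Omega}\setminus\overline{\Omega})$) with a coarea/degree argument identifying $\star J\tilde{u}/\gamma_n-\Sigma_y$ as a boundary of an integer rectifiable current supported in $\overline{\Omega}$ (analogous to \cite[Section~5]{2005-Indiana-ABO}), one shows that for every $p$ the current $\star J\tilde{u}_p/\gamma_n-\Sigma_y-\star Jv/\gamma_n\mres(\widetilde{\Omega}\setminus\overline{\Omega})$ is the boundary of an integer rectifiable $(m+1)$-current $M_p$ supported in $\overline{\Omega}$, with uniformly bounded mass. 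By the Federer--Fleming compactness theorem, up to subsequence $M_p\to M$ in the flat topology with $M$ integer rectifiable and supported in $\overline{\Omega}$, and passing to the limit yields $\Sigma-\Sigma_y=\partial M$ in $\overline{\Omega}$.

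For the $\Gamma$-limsup~$(ii)$, given $\Sigma$ cobordant to $\Sigma_y$ in $\overline{\Omega}$, I would set $\tilde{\Sigma}:=\Sigma+\star Jv/\gamma_n\mres(\widetilde{\Omega}\setminus\overline{\Omega})$: this is an integral $m$-boundary in $\widetilde{\Omega}$, being cobordant to $\star J\tilde{u}/\gamma_n$, which is itself a boundary by Proposition~\ref{prop:Jacobian_integral}. I then apply part~$(ii)$ of Theorem~\ref{main_res} to $\tilde{\Sigma}$ on $\widetilde{\Omega}$, adjusting the construction of Section~\ref{sbs: upper bound} so that the resulting map $\tilde{u}_p$ coincides with $v$ on $\widetilde{\Omega}\setminus\overline{\Omega}$. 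Concretely, one first approximates the portion of $\tilde{\Sigma}$ inside $\overline{\Omega}$ by a polyhedral current contained in $\Omega$ via Theorem~\ref{teo: poly approx}, with a small inward push so that the support avoids $\partial\Omega$; then in Theorem~\ref{teo: sing competitor} the constant background outside the singular neighborhood is replaced by $v$, joining the two pieces through a thin collar of $\partial\Omega$. The collar transition has negligible rescaled $p$-energy as $p\to n^-$, so that the restriction $u_p:=\tilde{u}_p|_{\Omega}\in W^{1,p}_g(\Omega;\Sp^{n-1})$ realizes the desired upper bound.
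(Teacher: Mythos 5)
The central difficulty in this boundary--value version is precisely the one your proposal sweeps under the rug: for $g\in W^{1-1/n,n}(\partial\Omega;\Sp^{n-1})$ there is in general \emph{no} sphere-valued extension $v\in W^{1,n}(\widetilde\Omega\setminus\overline\Omega;\Sp^{n-1})$ with trace $g$ on $\partial\Omega$, and your whole reduction hinges on this. The obstruction is topological: any $v\in W^{1,n}(A;\Sp^{n-1})$ satisfies $\star Jv=0$ in $A$ (smooth sphere-valued maps have closed $jv$, and smooth maps are dense in $W^{1,n}(A;\Sp^{n-1})$), and this forces the tangential Jacobian $\star Jg$ of the trace to be constrained in a way that fails as soon as $\star Jg\neq 0$ --- which is exactly the interesting regime, since $\partial\Sigma_y$ is essentially $\star Jg$. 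Concretely, take $n=2$, $m=1$, $\Omega=B^3_1$ and $g(\omega)=(\omega_1,\omega_2)/|(\omega_1,\omega_2)|$: if $v\in W^{1,2}(B^3_2\setminus\overline{B^3_1};\Sp^1)$ had trace $g$, then by the Bethuel--Zheng / Bourgain--Brezis--Mironescu lifting theorem ($p\geq 2$, simply connected domain) $v=e^{i\varphi}$ for $\varphi\in W^{1,2}$, hence $g$ lifts in $W^{1/2,2}$, contradicting $\star Jg\neq 0$. Your claim that the extension ``follows by standard sphere-valued extension arguments, as used in \cite[Section 8]{2005-Indiana-ABO}'' is not accurate: in ABO the maps are $\R^n$-valued (Ginzburg--Landau), and extensions of $\R^n$-valued Sobolev maps are unobstructed, which is not the case here. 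Note also a symptom of the same confusion: since $\star Jv=0$ when $v\in W^{1,n}$, the quantities $\star Jv/\gamma_n\mres(\widetilde\Omega\setminus\overline\Omega)$ you subtract and add are identically zero, so your formulas for $\Sigma$ and $\tilde\Sigma$ are vacuous.

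What the paper does instead is to take the $\R^n$-valued extension $u\in W^{1,n}$ to a thin shell, restrict to a collar where $\|\nabla u\|_{L^n}\leq\delta$, and set $u_p:=\pi_a(u)$ there. This radial-projection composition is sphere-valued with the correct trace (since $\pi_a$ fixes $\Sp^{n-1}$), but lies only in $W^{1,p}$ for $p<n$; the key point is the averaged bound $(n-p)\int_{U\setminus\overline\Omega}|\nabla\pi_a(u)|^p\,dx\leq C\delta$ for a good choice of $a\in B^n_{1/8}$ (the factor $n-p$ exactly compensates the $1/(n-p)$ blow-up of the $p$-energy of $\pi_a(u)$). Your estimate ``$(n-p)\|\nabla v\|_{L^p}^p\to 0$ since $v\in W^{1,n}$'' is the step that cannot be salvaged; it must be replaced by this quantitative $\pi_a$-averaging argument. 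The same issue affects your limsup construction, where again you glue $v$ on the exterior shell. Finally, the cobordism identification in the paper is organized through a dedicated lemma (a variant of \cite[Proposition 5.3]{2005-Indiana-ABO}) showing that two maps that agree outside $\overline\Omega$ have cobordant Jacobians in $\overline\Omega$; your ``coarea/degree argument'' is in the right spirit but is left too vague to carry the weight, and in any case it still relies on the nonexistent $v$.
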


We do not include a detailed proof of this result since it can be obtained following the ideas in \cite[Sections~5-7]{2005-Indiana-ABO}. Instead, we just sketch the main steps of the proof.

As for the compactness and $\Gamma$-liminf, the idea is to extend $u$ to some neighborhood $U$ of $\overline{\Omega}$, so that $\|\nabla u\|_{L^n(U\setminus \overline{\Omega})}\leq \delta$, for some fixed small $\delta>0$. Then we extend also the maps $u_p$ to sphere-valued maps defined on $U$ by setting $u_{p}:=\pi_{a}(u)$ in $U\setminus\overline{\Omega}$, where $\pi_a$ is the projection defined in \eqref{defn:pi_a}. By suitably choosing $a\in B^n _{1/8}$, and possibly extracting a subsequence, we obtain that
$$(n-p)\int_{U\setminus\overline{\Omega}}|\nabla u_p|^{p}\,dx \leq C \delta,$$
for some dimensional constant $C>0$. At this point, the compactness with respect to $\F_U$ of $\star Ju_p$ and the liminf inequality follow by applying Theorem~\ref{main_res} in $U$.

With a little more effort (see \cite[Footnote~19]{2005-Indiana-ABO}) one can also pass from flat convergence in $U$ to flat convergence in $\R^{n+m}$. Finally, one can prove that the limit current is cobordant with $\Sigma_y$ using the stability of the cobordism with respect to the flat convergence and the following variant of \cite[Proposition~5.3]{2005-Indiana-ABO}.

\begin{lemma}
Let $\Omega\Subset U$ be bounded Lipschitz open sets, and let $u_1,u_2\in W^{1,n-1}(U;\Sp^{n-1})$ be maps such that $u_1=u_2$ in $U\setminus\overline{\Omega}$. Then $\star Ju_1/\gamma_n \mres \Omega$ and $\star Ju_2/\gamma_n \mres \Omega$ are cobordant in $\overline{\Omega}$.
\end{lemma}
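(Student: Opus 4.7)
The plan is to construct the cobordism by first writing each $\star Ju_i/\gamma_n$ as the boundary of an integer rectifiable $(m+1)$-current in $U$, and then restricting the difference to $\overline{\Omega}$ via a slicing argument.

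More precisely, I would first invoke Proposition~\ref{prop:Jacobian_integral} to obtain integer rectifiable $(m+1)$-currents $N_1,N_2$ in $U$ with $\star Ju_i/\gamma_n=\partial N_i$, and set $N':=N_1-N_2$, so that $\partial N'=(\star Ju_1-\star Ju_2)/\gamma_n$ in $U$. A preliminary observation is that $\partial N'$ is actually supported in $\overline{\Omega}$: indeed, the $(n-1)$-form $ju_i$ is a pointwise expression in $u_i$ and its gradient, so the hypothesis $u_1=u_2$ on $U\setminus\overline{\Omega}$ forces $ju_1=ju_2$ there, and \eqref{def: star J def} then yields $\star Ju_1=\star Ju_2$ as currents on $U\setminus\overline{\Omega}$. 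The natural candidate for the cobordism is $N:=N'\mres\overline{\Omega}$, which is clearly integer rectifiable and supported in $\overline{\Omega}$.

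The main technical step --- and the one I expect to require the most care --- is to show that $N$ is actually an integral current, i.e.\ that its boundary is integer rectifiable. For this I would apply Federer's slicing theorem to the $1$-Lipschitz function $\phi(x):=\dist(x,\overline{\Omega})$, setting $N_t:=N'\mres\{\phi<t\}$. For almost every $t>0$ the current $N_t$ is integral and $\partial N_t=\partial N'-\langle N',\phi,t\rangle$, where the identity uses that $\partial N'$ is supported in $\overline{\Omega}\subset\{\phi<t\}$, and $\langle N',\phi,t\rangle$ is an integer rectifiable $m$-current supported on $\{\phi=t\}$. The coarea-type inequality $\int_0^T\M_U(\langle N',\phi,t\rangle)\,dt\le|N'|(\{\phi<T\})<+\infty$ guarantees a sequence $t_k\to 0^+$ along which the slices have uniformly bounded mass; combined with $|N'|(\{0<\phi<t_k\})\to 0$ this yields $N_{t_k}\to N'\mres\overline{\Omega}$ in the flat norm, with uniformly bounded masses and boundary masses. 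By the Federer--Fleming compactness theorem, the limit $N=N'\mres\overline{\Omega}$ is then an integral current supported in $\overline{\Omega}$.

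Finally, a further diagonal extraction provides a flat limit of $\partial N_{t_k}$ equal to $\partial N'+\nu$ for some integer rectifiable $m$-current $\nu$ supported on $\partial\Omega$ (since the slices concentrate there as $t_k\to 0^+$). Restricting to the open set $\Omega$ and using $\nu\mres\Omega=0$ gives $\partial N\mres\Omega=\star Ju_1/\gamma_n\mres\Omega-\star Ju_2/\gamma_n\mres\Omega$, so $\partial N$ and the difference of the two Jacobians coincide modulo a current supported on $\partial\Omega$. This is exactly the condition that the two Jacobians represent the same class in the $m$-th homology of $\overline{\Omega}$, which is the notion of cobordism recalled before the lemma statement, and concludes the proof.
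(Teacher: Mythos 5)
Your approach is genuinely different from the paper's (which mollifies $u_1,u_2$, applies \cite[Proposition~5.3]{2005-Indiana-ABO} to regular level sets of the smooth approximants, passes to the flat limit with a carefully chosen regular value $y$, and finally retracts a cobordism from a larger set $\overline{V}$ into $\overline{\Omega}$), but it has two real gaps.

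First, the integrality of $N'\mres\overline{\Omega}$ is not established by your slicing argument. Proposition~\ref{prop:Jacobian_integral} only provides \emph{integer rectifiable} $(m+1)$-currents $N_i$ with $\partial N_i=\star Ju_i/\gamma_n$; it does \emph{not} say that $N_i$ is an integral current, because the Jacobian $\star Ju_i$ of a $W^{1,n-1}$ sphere-valued map can have infinite mass (already for $m=0$: a map with an infinite sum of $\pm 1$ ``dipoles'' of shrinking scales has finite $W^{1,n-1}$-energy but its Jacobian has infinite total variation). Consequently $\partial N'$ may have infinite mass, and then the claim ``for almost every $t>0$ the current $N_t$ is integral'' fails: the boundary/slice identity gives $\partial N_t=\partial N'-\langle N',\phi,t\rangle$, and if $\M(\partial N')=+\infty$ while $\M(\langle N',\phi,t\rangle)<+\infty$, then $\M(\partial N_t)=+\infty$. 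The hypothesis of Federer--Fleming compactness (``uniformly bounded masses and boundary masses'') is therefore not met, and you cannot conclude that $N=N'\mres\overline{\Omega}$ is integral. The paper circumvents this entirely by only ever handling level sets of smooth mollifications, which are genuine smooth submanifolds of finite mass, and then passing to the limit in the flat topology.

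Second, even granting integrality, the conclusion is off. You obtain $\partial N=\partial N'-\nu$ with $\nu$ supported on $\partial\Omega$, hence $\partial N\mres\Omega=(\star Ju_1-\star Ju_2)/\gamma_n\mres\Omega$, but the definition of cobordant recalled before the lemma requires $\Sigma_1-\Sigma_2=\partial N$ \emph{exactly}, with no correction on $\partial\Omega$. What you have proved is that $[\Sigma_1]=[\Sigma_2]$ in the relative homology $H_m(\overline{\Omega},\partial\Omega)$, not in $H_m(\overline{\Omega})$; in general the ``current on $\partial\Omega$'' you are discarding need not itself be a boundary in $\overline{\Omega}$. You would need to show that $\nu$ coincides with $(\partial N')\mres\partial\Omega$ (so that $\partial N=(\partial N')\mres\Omega$), and this does not follow from the slicing argument as written, since $\nu=\partial(N'\mres\{\phi>0\})$ is a priori unrelated to the trace of $\partial N'$ on $\partial\Omega$. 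The paper's route sidesteps this too: it establishes cobordance in $\overline{V}$ with $\Omega\Subset V$ (where the Jacobians are supported strictly inside and the boundary of $V$ causes no trouble) and then pushes the cobordism into $\overline{\Omega}$ by a Lipschitz retraction.
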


\begin{proof}
Let $V$ be a Lipschitz open set such that $\Omega\Subset V\Subset U$, that admits a Lipschitz retraction onto $\Omega$. Let $\{\rho_\ep\}$ be a (discrete) sequence of mollifiers as in \eqref{defn:rho_ep}, and let $u_{1,\ep}=u_1*\rho_\ep$ and $u_{2,\ep}=u_2*\rho_\ep$. Then, for $\ep$ sufficiently small, we have that $u_{1,\ep}=u_{2,\ep}$ are defined in a neighborhood of $V$ and have the same trace on $\partial V$, and hence \cite[Proposition~5.3]{2005-Indiana-ABO} implies that $ \lcurr u_{1,\ep}^{-1}(y) \rcurr$ and $\lcurr u_{1,\ep}^{-2}(y) \rcurr$ are cobordant in $\overline{V}$ whenever $|y|<1$ is a regular value for both $u_{1,\ep}$ and $u_{2,\ep}$.

Now we choose $y$ to be a regular value for both $u_{1,\ep}$ and $u_{2,\ep}$ for every $\ep$ (in a countable sequence), and so that $\pi_y (u_{1,\ep})\to u_1$ and $\pi_y (u_{2,\ep})\to u_2$ in $W^{1,n-1}(V; \R^n)$. We remark that this holds for almost every $y$, up to extract a further subsequence of $\ep$.

As a consequence, we obtain that $\lcurr u_{1,\ep}^{-1}(y) \rcurr= \star J \pi_y (u_{1,\ep})/\gamma_n \to \star Ju_1/\gamma_n$ and, similarly, $\lcurr u_{2,\ep}^{-1}(y) \rcurr \to \star Ju_2/\gamma_n$, with respect to $\F_V$. Therefore, we deduce that $\star Ju_1/\gamma_n$ and $\star Ju_2/\gamma_n$ are cobordant in $\overline{V}$, namely $\star Ju_1/\gamma_n-\star Ju_2/\gamma_n= \star Ju_1/\gamma_n\mres \Omega -\star Ju_2/\gamma_n\mres \Omega$ is the boundary of an integral current that is supported in $\overline{V}$. By retracting this current into $\overline{\Omega}$, we find that $\star Ju_1/\gamma_n\mres \Omega $ and $\star Ju_2/\gamma_n \mres \Omega$ are cobordant also in $\overline{\Omega}$.
\end{proof}

As for the $\Gamma$-limsup inequality, following again \cite{2005-Indiana-ABO}, we can reduce ourselves to prove the statement along a subsequence and to consider currents with multiplicity $1$ that are polyhedral in some polyhedral domain $\Omega'\Subset \Omega$, and coincide with the Jacobian of a fixed map $u_a=\pi_a(u)$ in $\Omega\setminus \Omega'$, for a suitable choice of $a\in B_{1/8} ^n$. Then we can take the map provided by \cite[Theorem~9.6]{2005-Indiana-ABO} in $\Omega'$, and we extend it to $\Omega$ by setting it equal to $u_a$ in $\Omega\setminus \Omega'$.

The energy for this map in $\Omega'$ can be computed arguing exactly as in the proof of the $\Gamma$-limsup inequality without boundary conditions carried out in Subsection \ref{sbs: upper bound}. On the other hand, for a suitable choice of $a$ and of a subsequence, the limit of the normalized $p$-energy of $u_a$ in $\Omega\setminus\Omega'$ is small if $\Omega\setminus\Omega'$ is small, and this concludes the proof of the $\Gamma$-limsup inequality.

\smallskip

As an immediate consequence of Theorem~\ref{thm:bvp} we obtain the following.

\begin{cor}\label{cor: Hardt-Lin for currents} Let $n\geq 2$, $m\geq 1$ be integers, and let $\Omega\subset \R^{n+m}$ be a bounded Lipschitz domain. Let $g\in W^{1-1/n,n}(\partial \Omega;\Sp^{n-1})$ be a map, fix $u\in W^{1,n}(\Omega;\R^{n})$ with trace equal to $g$ on $\partial \Omega$, and let $\Sigma_y:=u^{-1}(y)$ be a regular level set of $u$ with $|y|<1$ (in the sense of \cite[Section~2.7]{2005-Indiana-ABO}). Let $u_p \in W^{1,p}(\Omega; \Sp^{n-1})$ be $p$-energy minimizing maps with $u_p|_{\partial \Omega} =g$ in the sense of traces, for $p\in (n-1,n)$.

Then, as $p \to  n^-$, a subsequence of $\star J u_p$ converges (in the flat topology of $\R^{n+m}$) to an integer rectifiable $m$-current $\Sigma$ that is area-minimizing among rectifiable $m$-currents cobordant with $\Sigma_y$ in $\overline{\Omega}$, and
\begin{equation}\label{th:limit-energy}
\lim_{p\to n^-} (n-p)\int_{\Omega} |\nabla u_p|^p\,dx = (n-1)^{\frac{n}{2}}\omega_{n-1} \M_{\R^{n+m}}(\Sigma).
\end{equation}
\end{cor}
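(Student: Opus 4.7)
The plan is to deduce the result as a standard consequence of the $\Gamma$-convergence statement in Theorem~\ref{thm:bvp}, together with the minimality of the maps $u_p$. First, to obtain a uniform bound on the rescaled $p$-energies, I will apply the $\Gamma$-limsup inequality of Theorem~\ref{thm:bvp}$(ii)$ with the trivial choice $\Sigma=\Sigma_y$ (which is obviously cobordant with itself in $\overline{\Omega}$). This produces a sequence of competitors $v_p\in W^{1,p}_g(\Omega;\Sp^{n-1})$ with
\[
\limsup_{p\to n^-}\, (n-p)\int_{\Omega}|\nabla v_p|^p\,dx \leq (n-1)^{\frac{n}{2}}\omega_{n-1}\M_{\R^{n+m}}(\Sigma_y)<+\infty,
\]
and by minimality of $u_p$ the same bound holds for $u_p$ in place of $v_p$.

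Next, I will invoke the compactness part Theorem~\ref{thm:bvp}$(i)$: up to a subsequence, $\star Ju_p/\gamma_n$ converges in the flat topology of $\R^{n+m}$ to some integer rectifiable $m$-current $\Sigma$ cobordant with $\Sigma_y$ in $\overline{\Omega}$, and
\[
\liminf_{p\to n^-}\, (n-p)\int_{\Omega}|\nabla u_p|^p\,dx \geq (n-1)^{\frac{n}{2}}\omega_{n-1}\M_{\R^{n+m}}(\Sigma).
\]

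To check that $\Sigma$ is area-minimizing in its cobordism class, let $\Sigma'$ be any integer rectifiable $m$-current supported in $\overline{\Omega}$ and cobordant with $\Sigma_y$ (hence with $\Sigma$) in $\overline{\Omega}$. Applying again the $\Gamma$-limsup part of Theorem~\ref{thm:bvp}$(ii)$ to $\Sigma'$, I obtain maps $w_p\in W^{1,p}_g(\Omega;\Sp^{n-1})$ satisfying
\[
\limsup_{p\to n^-}\, (n-p)\int_{\Omega}|\nabla w_p|^p\,dx \leq (n-1)^{\frac{n}{2}}\omega_{n-1}\M_{\R^{n+m}}(\Sigma').
\]
Chaining this with the minimality $\int|\nabla u_p|^p\leq\int|\nabla w_p|^p$ and the liminf inequality above yields $\M_{\R^{n+m}}(\Sigma)\leq\M_{\R^{n+m}}(\Sigma')$, proving area-minimality.

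Finally, the convergence of energies \eqref{th:limit-energy} is obtained by choosing $\Sigma'=\Sigma$ in the previous step: the corresponding competitors $w_p$, combined with the minimality of $u_p$, give
\[
\limsup_{p\to n^-}\, (n-p)\int_{\Omega}|\nabla u_p|^p\,dx \leq (n-1)^{\frac{n}{2}}\omega_{n-1}\M_{\R^{n+m}}(\Sigma),
\]
which together with the matching liminf bound upgrades the inequality to a limit. There is no substantial obstacle here: all the heavy lifting (compactness, the two $\Gamma$-inequalities, and the cobordism constraint on the limit) has already been absorbed into Theorem~\ref{thm:bvp}, and the argument is the classical extraction of an area-minimizing limit from a $\Gamma$-convergent sequence of minimization problems. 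The only minor care is to ensure that the subsequence along which compactness is achieved is the one on which the limit energy is computed, which is automatic once one has both the $\liminf$ and $\limsup$ matched to $(n-1)^{n/2}\omega_{n-1}\M_{\R^{n+m}}(\Sigma)$.
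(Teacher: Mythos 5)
Your proposal is correct and is exactly the standard argument the paper has in mind: the corollary is stated as an immediate consequence of Theorem~\ref{thm:bvp}, obtained by combining the recovery sequence for $\Sigma_y$ (to get equi-boundedness of the rescaled energies via minimality), the compactness/liminf part, and the limsup part applied to arbitrary cobordant competitors $\Sigma'$ and then to $\Sigma$ itself to match the bounds. No gaps; this matches the paper's (implicit) proof.
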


Moreover, we also obtain a new proof of Theorem~\ref{thm: Hardt-Lin unpublished} as follows.

\begin{proof}[Proof of Theorem~\ref{thm: Hardt-Lin unpublished}]
First of all, we observe that Corollary~\ref{cor: Hardt-Lin for currents} implies that, up to a subsequence, $\star Ju_p\to \Sigma$, for some rectifiable $m$-current that is area-minimizing among rectifiable $m$-currents cobordant with $\Sigma_y$ in $\overline{\Omega}$.

Moreover, the total mass of $\mu_p$ is (up to a constant) the $p$-energy of the maps $u_p$, so \eqref{th:limit-energy} yields $\mu_p(\Omega)\to \M_{\R^{n+m}}(\Sigma)$ and hence, up to a further subsequence, we have that $\mu_p\rightharpoonup \mu$ for some finite measure $\mu$ on $\overline{\Omega}$ with $\mu(\overline{\Omega})=\M_{\R^{n+m}}(\Sigma)$.

Now we observe that the liminf estimate in Theorem~\ref{thm:bvp} implies that
$$\mu(\overline{A})\geq \limsup_{p\to n^-} \mu_p(A) \geq \liminf_{p\to n^-} \mu_p(A) \geq \M_A(\Sigma),$$
for every open set $A\subset\R^{n+m}$, and these two conditions are enough to infer that $\mu=|\Sigma|$.
\end{proof}

\section{Appendix} \label{sec:appendix}

We collect here the proofs of some technical results. The first one is Theorem~\ref{teo:BN-Invent}, whose proof is the same as in \cite{BN-Invent,Bre-Mi}, even if the statement is slightly more general.

\begin{proof}[Proof of Theorem~\ref{teo:BN-Invent}] The proof can be obtained following the same lines of \cite{BN-Invent}. The key observation is that $j u- j v$ differs only by an exact part from a sum of $(n-1)$-forms of the type $(u_i-v_i) X $, where $X$ is some exterior product of $(n-1)$ one-forms that are either $du_k$ or $dv_\ell$, for some $k, \ell \in \{1, \dotsc, n\}$. 

More precisely, we write 
\begin{equation}\label{eq:jacobian_sum_X}
u_i\widehat{du_i}-v_i\widehat{dv_i}=(u_i-v_i)X_{i,n}+v_i\sum_{j=1} ^{n-1} X_{i,j+1}-X_{i,j}, 
\end{equation}
where 
$$X_{i,j}:=\begin{cases}
du_1\wedge\dots du_{i-1}\wedge du_{i+1}\wedge \dots \wedge du_{j}\wedge dv_{j+1} \wedge \dots \wedge dv_n &\text{if }i<j,\\
du_1\wedge \dots \wedge du_{i-1}\wedge dv_{i+1}\wedge \dots \wedge dv_{n}&\text{if }i=j , \\
du_1\wedge \dots \wedge du_{j-1} \wedge dv_j\wedge \dots \wedge dv_{i-1}\wedge dv_{i+1}\wedge \dots \wedge dv_n &\text{if }i>j,
\end{cases}$$
with the convention that $X_{i,1}=\widehat{dv_i}$ and $X_{i,n}=\widehat{du_i}$, and obvious understanding when $i\in \{1,n\}$. 

Now we observe that if $i>j$, it holds that
\begin{align*}
X_{i,j+1}-X_{i,j}&=du_{1}\wedge \dots \wedge du_{j-1}\wedge (du_{j}-dv_{j})\wedge dv_{j+1} \wedge \dots \wedge dv_{i-1}\wedge dv_{i+1}\wedge \dots \wedge dv_n,\\
&=(-1)^{j-1} d\bigl((u_{j}-v_{j}) du_{1}\wedge \dots \wedge du_{j-1}\wedge dv_{j+1} \wedge \dots \wedge dv_{i-1}\wedge dv_{i+1}\wedge \dots \wedge dv_n\bigr),
\end{align*}
and hence
\begin{multline*}
d\bigl(v_i (X_{i,j+1}-X_{i,j})\bigr)=dv_i\wedge (X_{i,j+1}-X_{i,j})\\
=(-1)^{j} d\bigl((u_{j}-v_{j})dv_i\wedge du_{1}\wedge \dots \wedge du_{j-1}\wedge dv_{j+1} \wedge \dots \wedge dv_{i-1}\wedge dv_{i+1}\wedge \dots \wedge dv_n\bigr), 
\end{multline*}
which yields 
\begin{multline*}
\abs{\langle \star d\bigl(v_i (X_{i,j+1}-X_{i,j})\bigr), \psi\rangle}\\
=\biggl| \int_{\Omega} (u_{j}-v_{j})d\psi\wedge dv_i\wedge du_{1}\wedge \dots \wedge du_{j-1}\wedge dv_{j+1} \wedge \dots \wedge dv_{i-1}\wedge dv_{i+1}\wedge \dots \wedge dv_n \biggr|.
\end{multline*}

With an analogous argument, one obtains a similar formula also for $i=j$ or $i<j$, and also for the term $(u_i-v_i)X_{i,n}$ in \eqref{eq:jacobian_sum_X}. Hence, for $i>j$, by Hölder's inequality
\begin{align*}
\abs{\langle \star d\bigl(v_i (X_{i,j+1}-X_{i,j})\bigr), \psi\rangle} &\leq \|v_j-u_j\|_{L^q} \|\nabla u\|_{L^p} ^{j-1} \|\nabla v\|_{L^p} ^{n-j}\|d\psi\|_{L^\infty}\\
&\leq \|v-u\|_{L^q}\bigl( \|\nabla u\|_{L^p} + \|\nabla v\|_{L^p}\bigr)^{n-1}\|d\psi\|_{L^\infty},
\end{align*}
and similarly for all the other terms in \eqref{eq:jacobian_sum_X}. Combining all these estimates with \eqref{eq:jacobian_sum_X} and the definition of the Jacobian, we obtain \eqref{th:BN_invent}.
\end{proof}

Finally, we prove Lemma~\ref{lemma:a_i}, which is completely elementary.

\begin{proof}[Proof of Lemma~\ref{lemma:a_i}]
We argue by induction on $K$. If $K=0$ we have to check that
$$a_0\geq \frac{2\lambda-1}{2\lambda} a_0,$$
which is trivial.

So let us assume that the statement is true for some $K\in\N$ and let us prove that it also holds for $K+1$. To this end, we write the left-hand side of \eqref{th:lemma-a_i} with $K+1$ in place of $K$ as
\begin{align*}
&\sum_{k=0} ^{K+1} \frac{\max\{S_k-a_{k+1},0\}^{\beta}}{S_k ^{\beta-1}}\lambda^k\\
&\qquad= \sum_{k=0} ^{K-1} \frac{\max\{S_k-a_{k+1},0\}^{\beta}}{S_k ^{\beta-1}}\lambda^k + \frac{\max\{S_{K}-a_{K+1},0\}^{\beta}}{S_{K} ^{\beta-1}}\lambda^K + \frac{\max\{S_{K+1}-0,0\}^{\beta}}{S_{K+1} ^{\beta-1}}\lambda^{K+1}\\
&\qquad=\sum_{k=0} ^{K-1} \frac{\max\{S_k-a_{k+1},0\}^{\beta}}{S_k ^{\beta-1}}\lambda^k + S_{K} \lambda^K + \frac{\max\{S_{K}-a_{K+1},0\}^{\beta}}{S_{K} ^{\beta-1}}\lambda^K + S_{K+1}\lambda^{K+1} - S_{K}\lambda^K.
\end{align*}

The inductive hypothesis ensures that
$$\sum_{k=0} ^{K-1} \frac{\max\{S_k-a_{k+1},0\}^{\beta}}{S_k ^{\beta-1}}\lambda^k + S_{K}\lambda^{K}\geq \frac{2\lambda-1}{2\lambda} \sum_{k=0} ^{K} a_{k} \lambda^{k},$$
so we need to prove that
\begin{equation}\label{claim-induction}
\frac{\max\{S_{K}-a_{K+1},0\}^{\beta}}{S_{K} ^{\beta-1}}\lambda^K + S_{K+1}\lambda^{K+1} - S_{K}\lambda^K \geq \frac{2\lambda-1}{2\lambda} a_{K+1} \lambda^{K+1}.
\end{equation}

To this end we preliminarily observe that if $S_K=0$, then $S_{K+1}=a_{K+1}$ and \eqref{claim-induction} becomes
$$a_{K+1}\lambda^{K+1}\geq \frac{2\lambda-1}{2\lambda} a_{K+1} \lambda^{K+1},$$
which is trivial. Otherwise, we can set $t:=a_{K+1}/S_K$, hence $S_{K+1}=S_{K}+a_{K+1}=(1+t)S_{K}$, so \eqref{claim-induction} becomes
$$\frac{\max\{(1-t)S_{K},0\}^{\beta}}{S_{K} ^{\beta-1}}\lambda^K + (1+t)S_{K}\lambda^{K+1} - S_{K}\lambda^K \geq \frac{2\lambda-1}{2\lambda} t S_{K} \lambda^{K+1}.$$

As a consequence, it is enough to prove that
$$\max\{1-t,0\}^{\beta} + (1+t)\lambda - 1 \geq \frac{2\lambda-1}{2} t \qquad \forall t\geq 0.$$

Let us distinguish two cases, depending on whether the first addendum vanishes or not. If $t>1$, then we have to prove that
$$(1+t)\lambda - 1 -\frac{2\lambda-1}{2} t  \geq 0,$$
which is true since
$$(1+t)\lambda - 1 -\frac{2\lambda-1}{2} t = \lambda - 1 + t/2,$$
is positive for every $t>1$, because $\lambda\in (3/4,1)$.

If $t\in [0,1]$, then we have to prove that
$$(1-t)^\beta + (1+t)\lambda - 1 \geq \frac{2\lambda-1}{2} t,$$
which is equivalent to say that the function $\psi(t):=(1-t)^\beta + \lambda - 1 + t/2$ is nonnegative in $[0,1]$.

This is true because the minimum of $\psi$ is achieved at
$$t_0=1-\frac{1}{(2\beta)^{\frac{1}{\beta-1}}},$$
because $\psi$ is decreasing in $[0,t_0]$ and increasing in $[t_0,1]$, and hence
$$\psi(t)\geq \psi(t_0)=\frac{1}{(2\beta)^{\frac{\beta}{\beta-1}}}-\frac{1}{2(2\beta)^{\frac{1}{\beta-1}}} +\lambda-\frac{1}{2}
> 0 - \frac{1}{4} + \frac{3}{4}-\frac{1}{2}=0,$$
because $\beta \in (1,2)$ and $\lambda\in (3/4,1)$.
\end{proof}

\medskip \noindent
\textbf{Acknowledgements.}  
The authors are members of the {\selectlanguage{italian}``Gruppo Nazionale per l'Analisi Matematica, la Probabilità e le loro Applicazioni''} (GNAMPA) of the {\selectlanguage{italian}``Istituto Nazionale di Alta Matematica''} (INdAM).

M.~F. and N.~P. acknowledge the INdAM-GNAMPA Project {\selectlanguage{italian} \em Gamma-convergenza di funzionali geometrici non-locali}, CUP \#E5324001950001\#.

M.~F. is a member of the PRIN Project 2022AKNSE4 {\em Variational and Analytical aspects of Geometric PDEs}.

N.~P. acknowledges the MIUR Excellence Department Project awarded to the Department of Mathematics, University of Pisa, CUP I57G22000700001.

\begingroup

  \bibliographystyle{alpha-abbr.bst}

\bibliography{references}

\endgroup

\end{document}